\documentclass[10pt]{amsart}
\usepackage{amsmath, amssymb, amsthm}
\usepackage{mathrsfs}
\usepackage{algorithm}
\usepackage{algorithmic}
\usepackage{tikz}
\usetikzlibrary{positioning, arrows.meta, fit}
\usepackage{pgfplots}
\pgfplotsset{compat=1.18}
\usepackage{mathtools}
\usepackage{booktabs}
\usepackage[colorlinks=true, linkcolor=blue, citecolor=red, urlcolor=blue]{hyperref}
\usepackage{xcolor}
\usepackage{float}
\usepackage{enumerate}
\usepackage{url}
\usepackage[a4paper, margin=2.12cm]{geometry}
\usepackage{doi}

\newtheorem{theorem}{Theorem}[section]
\newtheorem{lemma}{Lemma}[section]
\newtheorem{proposition}{Proposition}[section]
\newtheorem{corollary}{Corollary}[section]
\theoremstyle{definition}
\newtheorem{definition}{Definition}[section]
\newtheorem{remark}{Remark}[section]
\newtheorem{assumption}{Assumption}[section]

\newcommand{\HH}{\mathbb{H}}
\newcommand{\CC}{\mathbb{C}}
\newcommand{\RR}{\mathbb{R}}
\newcommand{\LL}{\mathcal{L}}
\newcommand{\dd}{\mathrm{d}}
\newcommand{\innerR}[2]{\left\langle #1,\, #2 \right\rangle_{\mathbb{R}}}
\newcommand{\trans}{^{\dagger}}
\newcommand{\Real}{\mathrm{Re}}
\newcommand{\epsStar}{\varepsilon^{*}}
\newcommand{\Cnc}{C_{\mathrm{nc}}}
\newcommand{\deltabar}{\bar{\delta}}
\newcommand{\sgnH}{\mathrm{sgn}_{\HH}}
\newcommand{\Phiconj}{\Phi_{\mathrm{conj}}}
\newcommand{\cBplus}{c_{B^{+}}}

\begin{document}

\title[Operator-Norm Transfer and Cohomological Rigidity]{Operator-Norm Transfer and Cohomological Rigidity for Quaternionic Quasi-Lie Structures \\ with Application to Sliding Mode $\beta$-Exponential Stability}

\author{Nassim Athmouni}
\address{Universit\'e de Gafsa, Campus Universitaire 2112, Gafsa, Tunisie}
\email{nassim.athmouni@fsgf.u-gafsa.tn}
\author{Nejib Brahmia}
\address{Universit\'e de Gafsa, Campus Universitaire 2112, Gafsa, Tunisie}
\author{Tarek Fajraoui}
\address{Universit\'e de Gafsa, Campus Universitaire 2112, Gafsa, Tunisie}
\author{Fehmi Mabrouk}
\address{Universit\'e de Gafsa, Campus Universitaire 2112, Gafsa, Tunisie}

\keywords{Quaternionic Operator Theory; Non-Commutative Analysis;
Cohomological Rigidity; Quasi-Lie Brackets; Chevalley--Eilenberg Cohomology;
Sliding Mode Control; $\beta$-Exponential Stability;
Generalized One-Sided Lipschitz; Linear Matrix Inequalities;
Robust Control; Attitude Dynamics}
\subjclass[2020]{Primary 47S10, 17B56;
Secondary 93B12, 93C10, 93D09, 93D23, 15B33, 17A30, 46S10}

\begin{abstract}
We develop operator-theoretic and cohomological tools for quaternionic
quasi-Lie structures, with sliding mode control as a motivating application.
Three main results are established. First, an exact operator-norm transfer
under the quaternionic anti-isomorphism $a \mapsto \bar{a}$, which enables
quantitative bounds from~\cite{Athmouni2026} to transfer between left- and
right-module conventions with no constant factor. Second, a transcription of
the cohomological rigidity result of~\cite{Athmouni2026} into a form usable
here: in the homogeneous case, under a local cohomological non-obstruction
hypothesis, an explicit bilinear correction $\Omega$ produces a bracket
satisfying the Jacobi identity exactly on a ball of admissible radius, with
all quantitative constants expressed through $C_{2}$ and the admissible
radius. Third, the projected Jacobi defect is shown to satisfy a generalized
one-sided Lipschitz condition with computable constants, obtained via a
uniform-selection argument handling state-dependence of the measurable
selection.
As an application, we develop a robust control framework with a cohomological
matching condition replacing pointwise verification: an integral sliding
surface yields $\beta$-exponential stability via an iterative linear matrix
inequality (LMI) scheme.
The work is purely analytical; closed-loop numerical simulations for
multidimensional systems are deferred to a companion paper. The framework is
restricted to the homogeneous quasi-Lie case; the Sobolev extension is
conjectural, and algorithm termination is established conditionally on
sufficient continuity assumptions on the LMI solution map.
\end{abstract}

\maketitle
\tableofcontents

\section{Introduction}\label{sec:intro}

Working with quaternionic systems forces a choice of convention. The state
space $\HH^{n}$ carries both a left- and a right-$\HH$-module structure,
and operator bounds proved under one convention do not, in general,
translate verbatim to the other --- conjugation $a \mapsto \bar a$ is
anti-multiplicative, not multiplicative. The recent quantitative theory
of quasi-Lie brackets on quaternionic Banach modules~\cite{Athmouni2026},
on which we rely heavily, is set up for right modules. Applications in
attitude dynamics, where rotation quaternions act on the left, naturally
favor the opposite convention. Reconciling the two requires care.

The first half of this paper supplies the missing operator-theoretic
ingredient. Given a bounded right-$\HH$-linear operator
$T : \HH^{n}\to\HH^{n}$, we consider its conjugate
$T^{L}:=\Phiconj\circ T\circ\Phiconj$, acting on the left-module
structure via componentwise conjugation $\Phiconj$. Because $\Phiconj$ is
an isometric involution, one expects $\|T^{L}\|_{\mathrm{op}}$ to be
comparable to $\|T\|_{\mathrm{op}}$; the question is whether equality
holds, or only a bounded ratio. Theorem~\ref{thm:norm_transfer} shows
that equality holds. The proof is short and uses only the isometric
property of $\Phiconj$, but the consequence is what we need:
\emph{every quantitative bound of~\cite{Athmouni2026} transfers to the
left-module setting without a constant factor.}

A separate question concerns the Jacobi defect itself. Given a bilinear
bracket $\LL : \HH^{n}\times\HH^{n}\to\HH^{n}$, write
$\mathcal{J}(x,y,z)$ for its Jacobi defect and assume
$\|\mathcal{J}(x,y,z)\|\leq 6C_{2}\|x\|\|y\|\|z\|$. The rigidity result
of~\cite[Theorem~4.12]{Athmouni2026} produces, in the homogeneous case,
a bilinear correction $\Omega$ such that the corrected bracket
$\{x,y\} := \LL(x,y) - \Omega(x,y)$ satisfies the Jacobi identity
\emph{exactly} on a ball of admissible radius
$\epsStar = \min\{1/(16A),\sqrt{1/(16C_{1})},1/(4C_{2}),\varepsilon_{0}\}$,
with $\|\Omega\|_{\varepsilon}\leq 4C_{2}\varepsilon$. We restate this
result (Theorem~\ref{thm:rigidity}) in a form adapted to the control
application that follows, and isolate the constants that will reappear
throughout.

Why control? The Jacobi defect arises in linearized attitude
dynamics~\cite{Shuster1993,Wie1985}, and a system with non-commutative
uncertainties is exactly the setting where existing sliding-mode
techniques run into trouble. The standard matching condition
$\delta(x,t) \in \mathrm{Im}(B)$ is checked trajectory by trajectory;
the switching gain $\eta$ must dominate $\|\delta\|_{\infty}$ uniformly,
producing chattering. We use the cohomological correction $\Omega$ to
build an explicit feedforward term
$\Delta_{\mathrm{app}}(x,t) := B^{+}\,\Pi_{\mathrm{Im}(B)}\,d\omega_{0}(\ldots)$
that pre-cancels most of $\delta$, leaving only the non-commutative
residual $R_{\mathrm{nc}}^{B}$ for $\eta$ to absorb. The required gain
drops from $O(\|\delta\|_{\infty})$ to $O(\Cnc R_{\max})$, an improvement
that is meaningful precisely when the bracket is close to satisfying
Jacobi.

Two further ingredients make the construction usable. The projected
Jacobi defect $\delta(x,t) = \Pi_{\mathrm{Im}(B)}\mathcal{J}_{t}(x,\cdot,\cdot)$
involves a measurable selection $\xi^{*}(x,t)$ that depends on the
state, and naive estimates fail to give one-sided Lipschitz constants.
Lemma~\ref{lem:osl_projection} circumvents this difficulty via a
\emph{frozen-selection trick}: rather than tracking the optimizer, we
lift to a uniform supremum over both selection arguments and obtain
explicit constants
$(\rho_{\delta}^{\mathrm{tight}},\ell_{\delta}^{\mathrm{tight}})
 = (2\rho_{\delta}, 4\deltabar_{\max})$.
The factors of $2$ and $4$ reflect the suboptimality gap of a
state-dependent selection; they collapse to $1$ when a state-independent
maximizer exists.

The pointwise condition $\delta(x,t) \in \mathrm{Im}(B)$ is replaced by
a structural condition on $(B,\LL)$ alone --- the
\emph{Cohomological Matching Condition} (Definition~\ref{def:cmc}),
$\mathrm{im}_{\mathrm{eval}}(d) \subseteq \mathrm{Im}(B)$. This is
verified once, at design time, and covers every trajectory of the
system. The remaining algorithmic problem is the well-known circular
dependence between the gain $\eta$ (which depends on the invariant set
size) and the invariant set itself (which depends on $\eta$).
Algorithm~\ref{alg:init} resolves the circularity by an explicit
ordering of computations and a halving loop; termination is established
conditionally (Lemma~\ref{lem:Cinfty_scaling}) under continuity of the
LMI solution map. The LMI in Algorithm~\ref{alg:lmi} uses the
coefficient
$\mu_{k}(2\rho_{\delta}^{\mathrm{eff}}+\mu_{\mathrm{Y}})
 + (\rho_{\delta}^{\mathrm{eff}}-\mu_{\mathrm{Y}})$
with explicit separation of the LMI scaling parameter $\mu_{k}$ and the
Young parameter $\mu_{\mathrm{Y}}$ (Theorem~\ref{thm:beta_kyb}).

The sliding-mode machinery itself --- integral sliding variable,
super-twisting reaching, $\beta$-exponential stability via a quadratic
Lyapunov function --- is classical~\cite{Utkin1992,Edwards1998,Khalil2002}.
What is new is the choice of inputs supplied to this machinery: a
feedforward built from the cohomological correction, a matching
condition checked algebraically, and a halving loop that reconciles
gain and domain. The OSL/GOSL framework
of~\cite{Abbaszadeh2010,Zhang2015} extends naturally to accommodate the
affine Jacobi defect term, and Lemma~\ref{lem:osl_projection} provides
the explicit constants.

A word on scope. All analytical results are restricted to the
\emph{homogeneous case} (Assumption~\ref{ass:cohom}), under which the
cohomological structure of~\cite{Athmouni2026} applies without further
hypothesis. The Sobolev extension is conjectural and the proof of
forward invariance is complete only when $P^{*} = I$; the general case
$\kappa(P^{*}) > 1$ remains open and is discussed in
Section~\ref{sec:conclusion}. Numerical simulations for multidimensional
closed-loop systems are deferred to a companion paper. The
operator-theoretic results (Theorem~\ref{thm:norm_transfer},
Theorem~\ref{thm:rigidity}, Lemma~\ref{lem:osl_projection}) are usable
independently of the control application; readers interested only in
the algebraic side may stop after Section~\ref{sec:prelim}.

\medskip
\noindent\textbf{Organization.}
Section~\ref{sec:prelim} collects the algebraic prerequisites, the real
representation $\Phi:\HH^{n}\to\RR^{4n}$ (a derived form of the complex
adjoint construction of~\cite{Zhang1997}), and the
three core operator-theoretic results.
Section~\ref{sec:problem} sets up the control problem and defines the
non-commutative residual $R_{\mathrm{nc}}^{B}$.
Section~\ref{sec:cohomology} develops the cohomological matching
condition. Section~\ref{sec:design} designs the integral sliding
surface and proves finite-time reaching
(Theorem~\ref{thm:control}). Section~\ref{sec:stability} establishes
$\beta$-exponential stability (Proposition~\ref{prop:beta_stability}).
Section~\ref{sec:sim} computes the constants on a small test bracket.
Section~\ref{sec:conclusion} discusses limitations and open
directions. Appendix~\ref{app:beta} contains a self-contained proof
of $\beta$-exponential stability under dissipative perturbations
(Theorem~\ref{thm:beta_kyb}, applicable to the GOSL setting of
Lemma~\ref{lem:osl_projection}); Appendix~\ref{app:bracket_check} verifies
the test-bracket constants by explicit computation.

\section{Preliminaries and Main Operator-Theoretic Results}\label{sec:prelim}

We begin with the algebraic facts needed throughout, then prove the three
operator-theoretic results announced in the introduction. The first two
results (Theorem~\ref{thm:norm_transfer} on norm transfer and
Theorem~\ref{thm:rigidity} restating the rigidity of~\cite{Athmouni2026})
can be read independently of the control application. The third
(Lemma~\ref{lem:osl_projection}, the GOSL bound on the projected Jacobi
defect) is more technical and motivated by what follows.

\subsection{Quaternionic Algebra}

The skew field $\HH$ consists of elements
$q = q_{0} + q_{1}\mathbf{i} + q_{2}\mathbf{j} + q_{3}\mathbf{k}$ with
$q_{0},\ldots,q_{3} \in \RR$ and
$\mathbf{i}^{2} = \mathbf{j}^{2} = \mathbf{k}^{2} = \mathbf{ijk} = -1$.
The conjugate $\bar{q} = q_{0} - q_{1}\mathbf{i} - q_{2}\mathbf{j} - q_{3}\mathbf{k}$
satisfies $\|q\|^{2} = q\bar{q} = \bar{q}q$ and
$\overline{pq} = \bar{q}\bar{p}$ (anti-multiplicativity).
Multiplication satisfies $pq \neq qp$ in general.

\subsection{Module Conventions and the Anti-isomorphism}

\begin{definition}[Left and Right Quaternionic Modules]\label{def:modules}
$\HH^{n}$ is a \emph{left} $\HH$-module when scalar multiplication is
$(a, x) \mapsto ax$ (component-wise left multiplication). It is a \emph{right}
$\HH$-module when $(x, a) \mapsto xa$. The two structures are related by the
anti-isomorphism $a \mapsto \bar{a}$: a right module becomes a left module by
redefining $a \cdot_L x := x \cdot_R \bar{a}$, and vice versa.
\end{definition}

\begin{remark}[Convention for this Paper]\label{rem:convention}
The state space $\HH^{n}$ carries both left- and right-$\HH$-module
structures. The bracket $\LL$ and the bilinear correction $\Omega$
of~\cite{Athmouni2026} are presented for right modules in the cited
reference; we adopt the same right-module convention
for $\LL$ and $\Omega$. Matrix actions $u\mapsto Bu$ for
$B\in\HH^{n\times m}$, $u\in\HH^{m}$ are the standard ones, compatible
with right-scalar multiplication ($B(uq)=(Bu)q$ for all $q\in\HH$ by
associativity of $\HH$).
Theorem~\ref{thm:norm_transfer} provides the tool to transfer quantitative
bounds to the left-module convention, should this be needed for specific
attitude-dynamics applications where rotation quaternions act on the left.
All stability analysis is performed in the real representation $\RR^{4n}$
via the map $\Phi$ of Remark~\ref{rem:real_rep}, which is
convention-independent.
\end{remark}

\begin{definition}[Componentwise Conjugation]\label{def:phi_conj}
Define $\Phiconj : \HH^{n} \to \HH^{n}$ by
$\Phiconj(x)_{i} = \bar{x}_{i}$ for $i = 1,\ldots,n$.
\end{definition}

\begin{lemma}[Properties of $\Phiconj$]\label{lem:phi_conj_props}
\begin{enumerate}
\item[\textup{(i)}] Isometry: $\|\Phiconj(x)\| = \|x\|$ for all $x \in \HH^{n}$.
\item[\textup{(ii)}] Involution: $\Phiconj \circ \Phiconj = \mathrm{id}$,
hence $\Phiconj$ is bijective.
\item[\textup{(iii)}] $\Phiconj(x \cdot a) = \bar{a} \cdot \Phiconj(x)$
for all $x \in \HH^{n}$, $a \in \HH$.
\item[\textup{(iv)}] $\Phiconj(a \cdot x) = \Phiconj(x) \cdot \bar{a}$
for all $x \in \HH^{n}$, $a \in \HH$.
\end{enumerate}
\end{lemma}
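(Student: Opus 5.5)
All four properties reduce, componentwise, to elementary facts about quaternion conjugation in $\HH$. We take the norm on $\HH^{n}$ to be the standard Euclidean one, $\|x\|^{2}=\sum_{i=1}^{n}\|x_{i}\|^{2}$, with $\|q\|^{2}=q\bar q$ as in the subsection on quaternionic algebra. Each item is then checked entrywise, using only the definition of $\Phiconj$ in Definition~\ref{def:phi_conj}.

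For (i), we use $\|\bar q\|=\|q\|$ for every $q\in\HH$. This holds because $\bar q$ has real coordinates $(q_{0},-q_{1},-q_{2},-q_{3})$, whose squares sum to the same value as those of $q$. Equivalently, $\|\bar q\|^{2}=\bar q\,\bar{\bar q}=\bar q q=\|q\|^{2}$. Summing over the components gives $\|\Phiconj(x)\|^{2}=\sum_{i}\|\bar x_{i}\|^{2}=\|x\|^{2}$.

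For (ii), we note that $\bar{\bar q}=q$, since sign-flipping the imaginary coordinates twice is the identity. Hence $(\Phiconj\circ\Phiconj)(x)_{i}=x_{i}$ for every $i$. A map that is its own inverse is bijective, which gives the second claim.

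For (iii) and (iv), we apply anti-multiplicativity $\overline{pq}=\bar q\bar p$ entrywise. For (iii), the $i$-th entry of $x\cdot a$ is $x_{i}a$, so
\[
\Phiconj(x\cdot a)_{i}=\overline{x_{i}a}=\bar a\,\bar x_{i}=(\bar a\cdot\Phiconj(x))_{i}.
\]
For (iv), the same computation gives
\[
\Phiconj(a\cdot x)_{i}=\overline{a x_{i}}=\bar x_{i}\,\bar a=(\Phiconj(x)\cdot\bar a)_{i}.
\]
If one wants anti-multiplicativity from first principles rather than citing it, it suffices by real bilinearity to check it on the basis $\{1,\mathbf i,\mathbf j,\mathbf k\}$. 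For example, $\overline{\mathbf{ij}}=\overline{\mathbf k}=-\mathbf k$, while $\bar{\mathbf j}\,\bar{\mathbf i}=(-\mathbf j)(-\mathbf i)=\mathbf{ji}=-\mathbf k$; the other pairs are checked the same way.

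There is no real obstacle here. The only care needed is the order of factors in (iii) and (iv), where conjugation turns right scalar multiplication into left and vice versa. This order reversal is exactly the anti-isomorphism of Definition~\ref{def:modules}.
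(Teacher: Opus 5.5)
Your proof is correct and follows the paper's argument essentially line for line: each item is checked componentwise from $\|\bar q\|=\|q\|$, $\bar{\bar q}=q$, and the anti-multiplicativity $\overline{pq}=\bar q\,\bar p$. The extra justifications you give, namely the coordinate check of $\|\bar q\|=\|q\|$ and the basis verification of anti-multiplicativity, are sound but go beyond what the paper requires, since it takes these quaternionic facts as given.
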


\begin{proof}
\textup{(i)} $\|\Phiconj(x)\|^{2} = \sum_{i}\|\bar{x}_{i}\|^{2} = \sum_{i}\|x_{i}\|^{2}$.
\textup{(ii)} $\Phiconj(\Phiconj(x))_{i} = \bar{\bar{x}}_{i} = x_{i}$.
\textup{(iii)} $\Phiconj(x \cdot a)_{i} = \overline{x_{i}a} = \bar{a}\bar{x}_{i}$
by $\overline{pq} = \bar{q}\bar{p}$.
\textup{(iv)} $\Phiconj(a \cdot x)_{i} = \overline{ax_{i}} = \bar{x}_{i}\bar{a}$.
\end{proof}

\subsection{Norm Transfer under the Anti-Isomorphism}

\begin{theorem}[Operator Norm Transfer under the Anti-isomorphism]
\label{thm:norm_transfer}
Let $T : \HH^{n} \to \HH^{n}$ be bounded and right-$\HH$-linear:
$T(x \cdot a) = T(x) \cdot a$ for all $x \in \HH^{n}$, $a \in \HH$.
Define $T^{L} := \Phiconj \circ T \circ \Phiconj$. Then:
\begin{enumerate}
\item[\textup{(i)}] $T^{L}$ is left-$\HH$-linear:
$T^{L}(a \cdot y) = a \cdot T^{L}(y)$ for all $y \in \HH^{n}$, $a \in \HH$.
\item[\textup{(ii)}] $\|T^{L}\|_{\mathrm{op}} = \|T\|_{\mathrm{op}}$ exactly.
\end{enumerate}
\end{theorem}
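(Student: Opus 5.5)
Both parts follow from Lemma~\ref{lem:phi_conj_props}; the key is to apply its items in the right order.

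For (i), fix $y \in \HH^{n}$ and $a \in \HH$. Compute $T^{L}(a\cdot y) = \Phiconj\bigl(T(\Phiconj(a\cdot y))\bigr)$. By Lemma~\ref{lem:phi_conj_props}(iv), $\Phiconj(a\cdot y) = \Phiconj(y)\cdot\bar a$. Right-$\HH$-linearity of $T$ then gives $T(\Phiconj(y)\cdot \bar a) = T(\Phiconj(y))\cdot\bar a$. Finally, Lemma~\ref{lem:phi_conj_props}(iii) applied with $x = T(\Phiconj(y))$ and scalar $\bar a$ yields $\Phiconj\bigl(T(\Phiconj(y))\cdot\bar a\bigr) = \bar{\bar a}\cdot \Phiconj(T(\Phiconj(y))) = a\cdot T^{L}(y)$. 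Additivity of $T^{L}$ is immediate, since $\Phiconj$ is real-linear and additive. Boundedness is handled in part (ii).

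For (ii), I would prove two inequalities. First, for any $y$, the isometry property (Lemma~\ref{lem:phi_conj_props}(i)) applied twice gives
\[
\|T^{L}y\| = \|T(\Phiconj y)\| \le \|T\|_{\mathrm{op}}\|\Phiconj y\| = \|T\|_{\mathrm{op}}\|y\|,
\]
so $\|T^{L}\|_{\mathrm{op}}\le\|T\|_{\mathrm{op}}$. For the reverse inequality, the involution property (ii) gives $\Phiconj\circ T^{L}\circ\Phiconj = T$, so the same estimate with the roles of $T$ and $T^{L}$ exchanged yields $\|T\|_{\mathrm{op}}\le\|T^{L}\|_{\mathrm{op}}$. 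Equivalently, since $\Phiconj$ is a bijective isometry it maps the unit sphere onto itself, and the two suprema defining the norms range over the same set of values.

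There is no real obstacle. The only point needing care is the bookkeeping in (i): conjugation appears twice, first turning the left scalar $a$ into the right scalar $\bar a$, and then turning it back into the left scalar $a$. Norm equality itself uses only that $\Phiconj$ is an isometric involution. Linearity plays no role in (ii), which is why no constant factor appears.
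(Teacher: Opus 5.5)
Your proof is correct and follows the paper's argument. Part (i) is the same three-step chain: Lemma~\ref{lem:phi_conj_props}(iv), then right-linearity of $T$, then Lemma~\ref{lem:phi_conj_props}(iii). Part (ii) rests on the same isometry-plus-bijectivity of $\Phiconj$; your two-inequality version using $\Phiconj\circ T^{L}\circ\Phiconj = T$ just repackages the paper's substitution $z=\Phiconj(y)$ over the unit sphere.
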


\begin{proof}
\textbf{Part~(i).}
\begin{align}
T^{L}(a \cdot y)
&= \Phiconj\bigl(T\bigl(\Phiconj(y) \cdot \bar{a}\bigr)\bigr)
\tag{Lemma~\ref{lem:phi_conj_props}(iv)}\\
&= \Phiconj\bigl(T\bigl(\Phiconj(y)\bigr) \cdot \bar{a}\bigr)
\tag{right-linearity of $T$}\\
&= a \cdot \Phiconj\bigl(T\bigl(\Phiconj(y)\bigr)\bigr)
= a \cdot T^{L}(y).
\tag{Lemma~\ref{lem:phi_conj_props}(iii)}
\end{align}
\textbf{Part~(ii).}
Since $\Phiconj$ is an isometric bijection (Lemma~\ref{lem:phi_conj_props}(i),(ii))
mapping the unit sphere of $\HH^{n}$ bijectively onto itself, we compute:
\begin{align*}
\|T^{L}\|_{\mathrm{op}}
&= \sup_{\|y\|=1}\|T^{L}(y)\|
= \sup_{\|y\|=1}\|\Phiconj(T(\Phiconj(y)))\| \\
&\stackrel{(*)}{=} \sup_{\|y\|=1}\|T(\Phiconj(y))\|
\stackrel{(**)}{=} \sup_{\|z\|=1}\|T(z)\|
= \|T\|_{\mathrm{op}},
\end{align*}
where $(*)$ uses the isometry of $\Phiconj$ (Lemma~\ref{lem:phi_conj_props}(i))
and $(**)$ uses the substitution $z = \Phiconj(y)$, with $\|z\|=\|y\|=1$ by the
isometry. \qedhere
\end{proof}

\begin{remark}[Comparison with Complex Case]
For complex Hilbert spaces, the conjugation map $z \mapsto \bar{z}$ is
$\RR$-linear but not $\CC$-linear, and the relation between $T$ and
$\bar{T} := \mathrm{conj} \circ T \circ \mathrm{conj}$ is more subtle.
In the quaternionic case, the anti-multiplicativity $\overline{pq} = \bar{q}\bar{p}$
is precisely what enables the exact norm equality in
Theorem~\ref{thm:norm_transfer}(ii). This phenomenon has no direct
complex analogue and is specific to the structure of $\HH$ as a real
division algebra with involution.
\end{remark}

\begin{corollary}[Transfer of Bilinear Correction Bound]
\label{cor:homotopy_transfer}
Let $\Omega:\HH^{n}\times\HH^{n}\to\HH^{n}$ be the bilinear correction
of~\cite[Theorem~4.12]{Athmouni2026}, defined in the right-module
convention and satisfying
$\|\Omega\|_{\varepsilon}\leq 4C_{2}\varepsilon$ (\cite[Prop.~4.3]{Athmouni2026}).
Its left-module counterpart
$\Omega^{L}(x,y):=\Phiconj(\Omega(\Phiconj(x),\Phiconj(y)))$
satisfies
\[
\|\Omega^{L}\|_{\varepsilon} = \|\Omega\|_{\varepsilon}\leq 4C_{2}\varepsilon.
\]
The same identity holds for the homotopy operator $T$
of~\cite[Lemma~3.7]{Athmouni2026}: $\|T^{L}\|_{3\to 2}=\|T\|_{3\to 2}\leq \varepsilon/3$.
\end{corollary}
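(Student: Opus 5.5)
The plan is to repeat the argument of Theorem~\ref{thm:norm_transfer}(ii), adapted to multilinear maps. The only input is that $\Phiconj$ is an isometric involution of $\HH^{n}$ (Lemma~\ref{lem:phi_conj_props}(i),(ii)). The first step is to pin down the norm $\|\cdot\|_{\varepsilon}$ from \cite{Athmouni2026}. I will assume it has one of two forms: either a supremum of $\|\Omega(x,y)\|$ over the product of $\varepsilon$-balls, or a normalized supremum $\sup \|\Omega(x,y)\|/(\|x\|\|y\|)$ taken over nonzero $x,y$ in the ball of radius $\varepsilon$. In both cases it is a supremum of a quantity built from $\|\Omega(x,y)\|$, $\|x\|$ and $\|y\|$, over a domain defined by norm constraints on the arguments. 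The same holds for $\|\cdot\|_{3\to 2}$, the norm of an operator taking trilinear (3-cochain) maps to bilinear (2-cochain) maps.

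For $\Omega^{L}$ I would argue as follows. For any admissible pair $(x,y)$, the isometry gives
\[
\|\Omega^{L}(x,y)\|=\|\Omega(\Phiconj(x),\Phiconj(y))\|.
\]
The substitution $(x',y')=(\Phiconj(x),\Phiconj(y))$ is a bijection of the admissible domain onto itself, because $\Phiconj$ preserves norms and is its own inverse. It also leaves the normalizing factors $\|x\|\|y\|$ unchanged. So the two suprema are taken over the same set of values, which gives $\|\Omega^{L}\|_{\varepsilon}=\|\Omega\|_{\varepsilon}$. The inequality $\|\Omega^{L}\|_{\varepsilon}\le 4C_{2}\varepsilon$ then follows directly from the cited bound \cite[Prop.~4.3]{Athmouni2026}.

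Separately, $\Omega^{L}$ should be checked to be bilinear in the left-module sense, so that it is the right object to compare. This is the computation of Theorem~\ref{thm:norm_transfer}(i), done in each argument using Lemma~\ref{lem:phi_conj_props}(iii),(iv). It is not needed for the norm identity itself.

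For the homotopy operator, the natural definition acts on cochains:
\[
T^{L}(\phi):=\bigl(T(\phi^{\Phiconj})\bigr)^{\Phiconj},
\]
where $\phi^{\Phiconj}(x_{1},\dots,x_{k}):=\Phiconj(\phi(\Phiconj(x_{1}),\dots,\Phiconj(x_{k})))$. I will use two facts about this conjugation map $\phi\mapsto\phi^{\Phiconj}$:
\begin{enumerate}
\item[(a)] it is an involution, which follows from Lemma~\ref{lem:phi_conj_props}(ii);
\item[(b)] it is an isometry for both the trilinear and the bilinear cochain norms, which is exactly the argument just given for $\Omega$.
\end{enumerate}
Then
\[
\|T^{L}\|_{3\to 2}=\sup_{\|\phi\|=1}\|T(\phi^{\Phiconj})\|=\sup_{\|\psi\|=1}\|T\psi\|,
\]
by the same reparametrization used in Theorem~\ref{thm:norm_transfer}(ii). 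The bound $\le\varepsilon/3$ is then imported from \cite[Lemma~3.7]{Athmouni2026}.

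The main obstacle is not computational. It is to confirm that the norms $\|\cdot\|_{\varepsilon}$ and $\|\cdot\|_{3\to2}$ in \cite{Athmouni2026} depend only on the real norms of the arguments and values. If they used inner products, weights, or the right-module structure in some other way, the invariance would have to be rechecked. I would first verify from the definitions there that the domains are balls and the normalizations are products of norms. That suffices, because $\Phiconj$ preserves all norms and maps balls bijectively onto themselves.
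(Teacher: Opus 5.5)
Your proposal is correct and matches the paper's proof for $\Omega$: the isometry of $\Phiconj$ strips the outer conjugation, and the substitution $u=\Phiconj(x)$, $v=\Phiconj(y)$, a norm-preserving bijection of $B(0,\varepsilon)\setminus\{0\}$ onto itself, identifies the two normalized suprema. Your hedge about the form of the norm is settled by Definition~\ref{def:cochain}, which gives exactly this normalized supremum. For $T$ the paper's proof only points back to Theorem~\ref{thm:norm_transfer}(ii), which concerns operators on $\HH^{n}$ rather than on cochain spaces, so your cochain-level conjugation $\phi\mapsto\phi^{\Phiconj}$ (an involutive isometry of each $C^{k}_{\varepsilon}$) supplies a step the paper leaves implicit.
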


\begin{remark}[Notation Convention]\label{rem:omega_notation}
Throughout this paper, $\Omega$ denotes the bilinear correction
of~\cite[Theorem~4.12]{Athmouni2026}, and $\omega_{0}$ denotes its
\emph{finite-rank} component. In the homogeneous case considered here, the
explicit formula is
\begin{equation}\label{eq:omega0_explicit}
\omega_{0}(x,y) = \tfrac{1}{3}\,\Pi_{\mathrm{cone}}(\psi)(x,y,x+y),
\end{equation}
which is a particular instance of the abstract construction
$\omega_{0} = S(\Pi(\psi))$ for $S:F\to C^{2}_{\varepsilon}$ a right inverse
of $d$ on the finite-dimensional subspace $F$
(\cite[Prop.~3.15]{Athmouni2026}); explicit checking that $d\omega_{0}=\Pi_{\mathrm{cone}}(\psi)$
in the homogeneous case is in~\cite[Lemma~B.1]{Athmouni2026}.
Both formulations depend on the admissible radius $\epsStar$.

\noindent\textbf{Important:} No \emph{universal numerical bound} on
$\|\omega_{0}\|_{\varepsilon}$ (such as $1/12$) is established
in~\cite{Athmouni2026}; only the structural bound
$\|\omega_{0}\|_{\varepsilon}\leq 6C_{0}C_{2}$ from~\cite[Lemma~4.10]{Athmouni2026},
where $C_{0}>0$ depends on the geometry of the finite-dimensional space $F$.
Earlier formulations of this manuscript invoked an unsupported numerical
constant; the present revision uses only the explicit bounds
$\|\Omega\|_{\varepsilon}\leq 4C_{2}\varepsilon$ and
$\|\omega_{0}\|_{\varepsilon}\leq 6C_{0}C_{2}$.
\end{remark}

\begin{proof}
Theorem~\ref{thm:norm_transfer}(ii) gives exact norm equality for
right-$\HH$-linear linear operators. For the bilinear cochain $\Omega$,
its left-module counterpart is
$\Omega^{L}(x,y) := \Phiconj(\Omega(\Phiconj(x),\Phiconj(y)))$.
By the isometry of $\Phiconj$ (Lemma~\ref{lem:phi_conj_props}(i)):
\[
\|\Omega^{L}\|_{\varepsilon}
= \sup_{x,y\in B(0,\varepsilon)\setminus\{0\}}
\frac{\|\Phiconj(\Omega(\Phiconj(x),\Phiconj(y)))\|}{\|x\|\|y\|}
= \sup_{x,y}\frac{\|\Omega(\Phiconj(x),\Phiconj(y))\|}{\|x\|\|y\|}.
\]
Since $\Phiconj$ maps $B(0,\varepsilon)\setminus\{0\}$ bijectively onto itself
and preserves norms, the substitution $u=\Phiconj(x)$, $v=\Phiconj(y)$ yields
$\|\Omega^{L}\|_{\varepsilon}=\|\Omega\|_{\varepsilon}\leq 4C_{2}\varepsilon$,
the right-hand bound being~\cite[Prop.~4.3]{Athmouni2026}.
\end{proof}

\begin{remark}[On the Cochain Bounds Used Here]\label{rem:both_regimes}
Throughout this paper, all quantitative bounds rely solely on the
\emph{explicit, derivable} estimates of~\cite{Athmouni2026}:
\begin{itemize}
\item $\|\Omega\|_{\varepsilon}\leq 4C_{2}\varepsilon$ for the full bilinear
correction (\cite[Prop.~4.3]{Athmouni2026}),
\item $\|\omega_{0}\|_{\varepsilon}\leq 6C_{0}C_{2}$ for the finite-rank
component (\cite[Lemma~4.10]{Athmouni2026}), where $C_{0}>0$ depends only
on the geometry of the finite-dimensional space $F$ of~\cite[Def.~3.6]{Athmouni2026},
\item $\|K\|\leq 4A\varepsilon+4C_{1}\varepsilon^{2}$ for the defect operator
(\cite[Lemma~3.11]{Athmouni2026}), giving $\|K\|<1/2$ for $\varepsilon<\epsStar$.
\end{itemize}
By Corollary~\ref{cor:homotopy_transfer}, these bounds transfer exactly to
the left-module convention. The admissible radius is
$\epsStar=\min\{1/(16A),\sqrt{1/(16C_{1})},1/(4C_{2}),\varepsilon_{0}\}$
(\cite[Def.~4.1]{Athmouni2026}; we write $\sqrt{1/(16C_{1})}$ explicitly
to ensure $4C_{1}\varepsilon^{2}\leq 1/4$, which requires
$\varepsilon^{2}\leq 1/(16C_{1})$).

\textbf{Withdrawn claim.} Earlier versions of this manuscript invoked
a numerical bound $\|\omega_{0}\|_{\mathrm{op}}\leq 1/12$ attributed
to~\cite{Athmouni2026}. No such universal constant appears in the
published reference. The present revision removes this claim throughout
and uses only the bounds listed above. Wherever the symbol
$\omega_{\mathrm{op}}$ appears in numerical estimates, it must be
read as a placeholder bounded by $6C_{0}C_{2}$ (a problem-dependent
quantity), \emph{not} by any absolute constant.
\end{remark}

\subsection{Cohomological Rigidity of Quasi-Lie Brackets}

\begin{definition}[Quasi-Lie Bracket and Jacobiator]\label{def:quasi_lie}
Let $\LL : \HH^{n}\times\HH^{n}\times[0,\infty) \to \HH^{n}$,
$(x,y,t)\mapsto\LL(x,y,t)$, be a family of $\RR$-bilinear maps in $(x,y)$
parametrized by $t$. We say $\LL$ is a \emph{(time-dependent) quasi-Lie
bracket} if it is skew-symmetric in $(x,y)$ for every $t$ and its
time-dependent Jacobiator
\[
\mathcal{J}(x,y,z,t) = \LL(x,\LL(y,z,t),t) + \LL(y,\LL(z,x,t),t) + \LL(z,\LL(x,y,t),t)
\]
satisfies $\|\mathcal{J}(x,y,z,t)\| \leq 6C_{2}\|x\|\|y\|\|z\|$ uniformly in
$t\geq 0$, for some $C_{2}>0$. When $\LL$ is time-independent, the symbol
$\mathcal{J}(x,y,z)$ is used and $\mathcal{J}_{t}\equiv\mathcal{J}$ becomes
constant in $t$ (this is the case for the test bracket of
Section~\ref{sec:sim}). Throughout this paper, $\mathcal{J}_{t}$ and
$\mathcal{J}(\cdot,\cdot,\cdot,t)$ are used interchangeably.
\end{definition}

\begin{remark}[Bracket Constants $A$, $C_{1}$, and $C_{2}$]\label{rem:bracket_constants}
Following~\cite[Def.~2.4]{Athmouni2026}, the bracket $\LL$ admits three
quantitative defect constants $A,C_{1},C_{2}\geq 0$:
\begin{itemize}
\item $A$ is the \emph{bilinear bound}: $\|\LL(x,y)\|\leq A\|x\|\|y\|$ for all
$x,y\in B(0,\varepsilon_{0})$ (\cite[eq.~(2.3)]{Athmouni2026}).
\item $C_{1}$ is the \emph{cubic antisymmetry defect}: the antisymmetry
defect $\varphi(x,y):=\LL(x,y)+\LL(y,x)$ satisfies
\[
\|\varphi(x,y)\| \leq C_{1}\|x\|\|y\|(\|x\|+\|y\|)
\]
(\cite[eq.~(2.4)]{Athmouni2026}). \textbf{When $\LL$ is exactly
antisymmetric}, $\varphi\equiv 0$ and one may take $C_{1}=0$; this is the
case for our test bracket (Remark~\ref{rem:test_constants}). In the
antisymmetric setting, the admissible radius
$\epsStar=\min\{1/(16A),\sqrt{1/(16C_{1})},1/(4C_{2}),\varepsilon_{0}\}$
of Theorem~\ref{thm:rigidity} simplifies to
$\epsStar_{\mathrm{anti}}=\min\{1/(8A),1/(4C_{2}),\varepsilon_{0}\}$
(\cite[Appendix~A.2]{Athmouni2026}).
\item $C_{2}$ is the \emph{Jacobiator constant} of Definition~\ref{def:quasi_lie}:
$\|\mathcal{J}(x,y,z)\|\leq 6C_{2}\|x\|\|y\|\|z\|$
(\cite[eq.~(2.5)]{Athmouni2026}).
\end{itemize}
\textbf{Important warning (notation conflict).} A separate quantity, the
\emph{bracket-squared constant}, can be defined by
$\|\LL(x,\LL(y,z))\|\leq A^{2}\|x\|\|y\|\|z\|$ and follows from the bilinear
bound on $A$ alone. Earlier drafts of this manuscript denoted this quantity by
$C_{1}$, but that conflicts with the notation of~\cite[Def.~2.4]{Athmouni2026}.
In the present revision, the symbol $C_{1}$ is reserved for the
antisymmetry-defect constant of~\cite{Athmouni2026}. For the test system
of Section~\ref{sec:sim}, $C_{1}=0$ (exact antisymmetry); the value
$0.04 = 4\varepsilon_{b}^{2}$ reported in earlier drafts referred to the
bracket-squared constant and is not used in the threshold formulas.
\end{remark}

\begin{definition}[Chevalley--Eilenberg Cochain Complex of~\cite{Athmouni2026}]
\label{def:cochain}
Following~\cite[Def.~3.1]{Athmouni2026}, the localized cochain spaces are
\[
C^{k}_{\varepsilon} := \{\omega : B(0,\varepsilon)^{k} \to \HH^{n} \mid
\omega \text{ is continuous, $k$-linear}\},
\]
\textbf{with values in $\HH^{n}$} (not in $\HH$), endowed with the localized
operator norm
\[
\|\omega\|_{\varepsilon} := \sup_{x_{1},\ldots,x_{k}\in B(0,\varepsilon)\setminus\{0\}}
\frac{\|\omega(x_{1},\ldots,x_{k})\|}{\|x_{1}\|\cdots\|x_{k}\|}.
\]
The Chevalley--Eilenberg differential
$d:C^{k}_{\varepsilon}\to C^{k+1}_{\varepsilon}$
of~\cite[Def.~3.1]{Athmouni2026} is defined via the bracket $\LL$ as
\begin{equation}\label{eq:coboundary_alt}
(d\omega)(x_{0},\ldots,x_{k})
=\sum_{i=0}^{k}(-1)^{i}\LL(x_{i},\omega(x_{0},\ldots,\hat{x}_{i},\ldots,x_{k}))
+\!\!\sum_{0\leq i<j\leq k}\!\!(-1)^{i+j}
\omega(\LL(x_{i},x_{j}),x_{0},\ldots,\hat{x}_{i},\ldots,\hat{x}_{j},\ldots,x_{k}).
\end{equation}
The identity $d^{2}=0$ holds \emph{exactly} in the homogeneous case
(\cite[Lemma~B.1]{Athmouni2026}), without any commutativity restriction.

\smallskip
\noindent\textbf{Notational convention.} Throughout this paper, the symbol
$\psi\in C^{3}_{\varepsilon}$ denotes the \emph{Jacobi defect cochain} in
the sense of~\cite[eq.~(3.1)]{Athmouni2026}:
$\psi(x,y,z) := \LL(x,\LL(y,z))+\LL(y,\LL(z,x))+\LL(z,\LL(x,y))$.
Thus $\psi(x,y,z) = \mathcal{J}(x,y,z)$ where $\mathcal{J}$ is the Jacobiator
of Definition~\ref{def:quasi_lie}. We use $\psi$ when emphasizing the
cohomological role (input to the homotopy operator $T$) and $\mathcal{J}$
when emphasizing the trilinear-defect bound.

The homotopy operator $T:C^{3}_{\varepsilon}\to C^{2}_{\varepsilon}$
of~\cite[Def.~3.5]{Athmouni2026} satisfies the identity
$Td+dT = \mathrm{Id}-\Pi+K$ on $C^{3}_{\varepsilon}$, where $\Pi$ is the
projection onto the finite-dimensional conic subspace $F$
of~\cite[Def.~3.6]{Athmouni2026} and $\|K\|\leq 4A\varepsilon+4C_{1}\varepsilon^{2}$.

\smallskip
\noindent\textbf{Withdrawn alternative.} A previous version of this section
defined an \emph{alternative} cochain complex with cochains taking scalar
values in $\HH$ and a differential without bracket terms,
$(d\omega)(v_{0},\ldots,v_{k}) = \sum(-1)^{j}\omega(\ldots,\hat{v}_{j},\ldots)\cdot v_{j}$.
That complex is \emph{not} the Chevalley--Eilenberg complex
of~\cite{Athmouni2026}: $d^{2}=0$ holds in it only on the commutative
sub-algebra, and the results of~\cite{Athmouni2026} do not transfer to it.
The present revision uses exclusively the complex of~\cite{Athmouni2026}
above.
\end{definition}

\begin{remark}[Local cohomological non-obstruction hypothesis]\label{rem:cohom_hyp}
All subsequent results rely on the inclusion $F\subset\mathrm{im}(d)$
(\cite[Def.~3.6]{Athmouni2026}), where $F$ is the finite-dimensional conic subspace
\[
F:=\{\Phi\in C^{3}_{\varepsilon}:\Phi(x,y,z)=\Phi(x,y,x+y)\ \forall x,y,z\in B(0,\varepsilon)\}.
\]
Status of this hypothesis (\cite[Remark~5.5]{Athmouni2026}):
\begin{enumerate}
\item[\textup{(i)}] \textbf{Proven unconditionally} in the homogeneous case
(\cite[Theorem~4.12, Lemma~B.1]{Athmouni2026}), which is the only case
considered in this paper.
\item[\textup{(ii)}] Conjectural for $H^{s}(\RR^{n},\HH)$ with $s>n/2+1$.
\item[\textup{(iii)}] Known to fail for $s\leq n/2+1$ (\cite[Example~5.6]{Athmouni2026}).
\end{enumerate}
Throughout this paper we restrict to the homogeneous case (i), so the
hypothesis holds unconditionally.
\end{remark}

\begin{remark}[$d^{2}=0$ holds without commutativity restriction]
\label{rem:d2zero}
In the Chevalley--Eilenberg complex of Definition~\ref{def:cochain},
$d^{2}=0$ holds in the homogeneous case
(\cite[Lemma~B.1, Step~2]{Athmouni2026}) thanks to the homogeneity of the
Jacobi defect $\psi$ of degree 2 combined with the exact antisymmetry of
$\LL$, which together force the degree-2 part of $K\psi$ to vanish. No
restriction to a commutative sub-algebra of $\HH$ is required.
\end{remark}

\begin{theorem}[Cohomological Rigidity, \cite{Athmouni2026}, Theorem~4.12]
\label{thm:rigidity}
Assume:
\begin{enumerate}
\item[(H1)] $\LL:\HH^{n}\times\HH^{n}\to\HH^{n}$ is $\RR$-bilinear and exactly
antisymmetric. If, in addition, $\LL$ is right-$\HH$-linear, then so is the
correction $\Omega$ produced below (\cite[Lemma~4.17]{Athmouni2026}); the
existence and norm bounds for $\Omega$ do \emph{not} require right-$\HH$-linearity.
\item[(H2)] the Jacobi defect
$\mathcal{J}(x,y,z) = \LL(x,\LL(y,z))+\LL(y,\LL(z,x))+\LL(z,\LL(x,y))$
is homogeneous of degree~2 in the sense of~\cite[Theorem~4.12]{Athmouni2026};
\item[(H3)] the quantitative bounds of~\cite[Def.~2.4]{Athmouni2026} hold
on $B(0,\varepsilon_{0})$ with constants $A,C_{1},C_{2}\geq 0$ (with $C_{1}=0$
when $\LL$ is exactly antisymmetric).
\end{enumerate}
Let
$\epsStar:=\min\{1/(16A),\sqrt{1/(16C_{1})},1/(4C_{2}),\varepsilon_{0}\}$.
For every $0<\varepsilon<\epsStar$ there exists a bilinear correction
$\Omega\in C^{2}_{\varepsilon}$ given by
\begin{equation}\label{eq:Omega_def}
\Omega \;=\; T(\mathrm{Id}+K)^{-1}\psi \;+\; \omega_{0},
\end{equation}
where (i) $T:C^{3}_{\varepsilon}\to C^{2}_{\varepsilon}$ is the radial
homotopy operator (\cite[Def.~3.5]{Athmouni2026}); (ii) $K$ is the defect
operator with $\|K\|\leq 4A\varepsilon+4C_{1}\varepsilon^{2}<1/2$ and
$\|(\mathrm{Id}+K)^{-1}\|\leq 2$; (iii) $\omega_{0}\in C^{2}_{\varepsilon}$ is the
finite-rank component, given explicitly by
$\omega_{0}(x,y)=\tfrac{1}{3}\Pi_{\mathrm{cone}}(\psi)(x,y,x+y)$
and satisfying $d\omega_{0}=\Pi_{\mathrm{cone}}(\psi)$
(\cite[Theorem~4.12, Lemma~B.1]{Athmouni2026}).
The corrected bracket
\begin{equation}\label{eq:corrected_bracket}
\{x,y\}:=\LL(x,y)-\Omega(x,y)
\end{equation}
satisfies the Jacobi identity \emph{exactly} on $B(0,\epsStar)$:
$\mathrm{Jac}_{\{\cdot,\cdot\}}\equiv 0$. The norm bounds
\begin{equation}\label{eq:Omega_norm}
\|\Omega\|_{\varepsilon}\leq 4C_{2}\varepsilon,
\qquad
\|\Omega(x,y)\|\leq 2C_{2}\|x\|\|y\|(\|x\|+\|y\|)
\end{equation}
hold (both bounds are part of~\cite[Prop.~4.3]{Athmouni2026}: the
operator-norm bound $\|\Omega\|_\varepsilon\leq 4C_2\varepsilon$ and the
``in particular'' pointwise refinement
$\|\Omega(x,y)\|\leq 2C_2\|x\|\|y\|(\|x\|+\|y\|)$), applied in the
homogeneous case where $\omega_{0}$ can be absorbed via the
renormalization of~\cite[Theorem~4.12, Lemma~B.1]{Athmouni2026}; cf.\ also
\cite[Prop.~4.11]{Athmouni2026} for the
general decomposition $\|\Omega\|_{\varepsilon}\leq 4C_{2}\varepsilon
+\|\omega_{0}\|_{\varepsilon}$). If $\LL$ is right-$\HH$-linear, so is
$\Omega$ (\cite[Lemma~4.17]{Athmouni2026}). By
Corollary~\ref{cor:homotopy_transfer}, the bound~\eqref{eq:Omega_norm}
transfers exactly to the left-module convention.

\smallskip
\noindent\textbf{Bracket-image condition.} If, in addition, the pair
$(\LL,B)$ satisfies the Cohomological Matching Condition
(Definition~\ref{def:cmc}: $\mathrm{im}(d)\subseteq\mathrm{Im}(B)$), then
$\Omega(x,y)\in\mathrm{Im}(B)$ for all $x,y\in B(0,\epsStar)$ (see
Corollary~\ref{cor:exact_rejection}).
\end{theorem}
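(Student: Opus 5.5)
Since Theorem~\ref{thm:rigidity} restates \cite[Theorem~4.12]{Athmouni2026}, the plan is to assemble it from the cited ingredients, checking at each step that the hypotheses (H1)--(H3) and the radius $\epsStar$ are what those ingredients need.

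\textbf{Step 1: invertibility of $\mathrm{Id}+K$.} For $0<\varepsilon<\epsStar$ we have $\varepsilon\le 1/(16A)$ and $\varepsilon^{2}\le 1/(16C_{1})$, with the $C_{1}$-term vacuous when $C_{1}=0$ under (H1). Hence $\|K\|\le 4A\varepsilon+4C_{1}\varepsilon^{2}\le 1/4+1/4=1/2$, with strict inequality since $\varepsilon<\epsStar$. A Neumann series then gives $\|(\mathrm{Id}+K)^{-1}\|\le 2$.

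\textbf{Step 2: the correction solves the cohomological equation.} Define $\Omega$ by \eqref{eq:Omega_def}, and put $\phi:=(\mathrm{Id}+K)^{-1}\psi$. The homotopy identity $Td+dT=\mathrm{Id}-\Pi+K$ gives
\[
dT\phi=(\mathrm{Id}+K)\phi-\Pi\phi-Td\phi=\psi-\Pi\phi-Td\phi .
\]
Two facts are then needed.
\begin{itemize}
\item $d\phi=0$. This should follow from the Bianchi-type identity $d\psi=0$, which holds exactly in the homogeneous case (Remark~\ref{rem:d2zero}), together with the fact that the degree-2 part of $K\psi$ vanishes. Under (H2) this means $K\psi=0$, so $\phi=\psi$.
\item $\Pi\phi=\Pi_{\mathrm{cone}}(\psi)$. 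Its cobounding is supplied by $\omega_{0}$, since $d\omega_{0}=\Pi_{\mathrm{cone}}(\psi)$ by \cite[Lemma~B.1]{Athmouni2026}. This is exactly where the non-obstruction hypothesis $F\subset\mathrm{im}(d)$ (Remark~\ref{rem:cohom_hyp}(i)) enters.
\end{itemize}
Combining the two facts yields $d\Omega=\psi$, up to the sign convention used for $\omega_{0}$.

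\textbf{Step 3: exact Jacobi for the corrected bracket.} Expand $\mathrm{Jac}_{\{\cdot,\cdot\}}$ for $\{x,y\}=\LL(x,y)-\Omega(x,y)$. The expansion has three kinds of terms:
\begin{itemize}
\item $\psi$ itself;
\item terms linear in $\Omega$, which by \eqref{eq:coboundary_alt} for $k=2$ combine to $-d\Omega$;
\item the quadratic term $\Omega(x,\Omega(y,z))+\text{cyclic}$.
\end{itemize}
The linear part cancels $\psi$ by Step 2.

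\textbf{Main obstacle.} The quadratic term is the step I expect to be hardest. My first attempt would use homogeneity: $\psi$ is cubic of degree~2, and the radial homotopy $T$ produces a cochain whose own Jacobiator lies in the same homogeneous component. I would then argue that the quadratic term is forced into the cone $F$ and is absorbed by the renormalization of $\omega_{0}$ described in \cite[Lemma~B.1]{Athmouni2026}. Otherwise I would simply cite \cite[Theorem~4.12]{Athmouni2026} for this cancellation.

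\textbf{Step 4: norm bounds, linearity and transfer.} I would take the bounds \eqref{eq:Omega_norm} from \cite[Prop.~4.3]{Athmouni2026}, using $\|T\|\le\varepsilon/3$ and $\|(\mathrm{Id}+K)^{-1}\|\le2$ together with $\|\psi\|\le 6C_{2}$. This gives $\|T\phi\|\le 4C_{2}\varepsilon$, and $\omega_{0}$ is absorbed in the homogeneous case. Right-$\HH$-linearity of $\Omega$ would come from \cite[Lemma~4.17]{Athmouni2026}, since $T$, $K$ and $\Pi$ commute with right scalars when $\LL$ does. The transfer to the left-module convention is Corollary~\ref{cor:homotopy_transfer}. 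Finally, the bracket-image claim follows because $\Omega$ is built from elements of $\mathrm{im}(d)$ evaluated pointwise, so under Definition~\ref{def:cmc} its values lie in $\mathrm{Im}(B)$, as deferred to Corollary~\ref{cor:exact_rejection}.
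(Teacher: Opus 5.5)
Step~1 is correct. Where you fall back on citing \cite[Theorem~4.12, Prop.~4.3]{Athmouni2026}, you coincide with the paper: its entire proof is that citation, together with Corollary~\ref{cor:homotopy_transfer} and a forward pointer to Corollary~\ref{cor:exact_rejection}. The derivation you add in Steps~2--3, however, has a genuine gap at the decisive point. Your own expansion gives, for skew $\Omega$,
\[
\mathrm{Jac}_{\{\cdot,\cdot\}}=\psi-d\Omega+\mathrm{Jac}_{\Omega},
\qquad
\mathrm{Jac}_{\Omega}(x,y,z):=\Omega(x,\Omega(y,z))+\Omega(y,\Omega(z,x))+\Omega(z,\Omega(x,y)).
\]
So exact Jacobi is the nonlinear equation $d\Omega-\mathrm{Jac}_{\Omega}=\psi$, whereas Step~2 only solves its linearization $d\Omega=\psi$. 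What remains is $\mathrm{Jac}_{\Omega}\equiv 0$, i.e.\ that $\Omega$ itself satisfies the Jacobi identity, and nothing in a formula linear in $\psi$ delivers that. Your proposed repair cannot supply it, for three reasons. First, $\psi$, $d\Omega$ and $\mathrm{Jac}_{\Omega}$ are all trilinear, so no homogeneity argument separates them. Second, for the $k$-linear cochains of Definition~\ref{def:cochain} the cone is trivial: putting $z=0$ in $\Phi(x,y,z)=\Phi(x,y,x+y)$ gives $\Phi(x,y,x+y)=\Phi(x,y,0)=0$, hence $\Phi\equiv 0$. Thus $F=\{0\}$ and $\omega_{0}=0$, and ``the quadratic term is forced into $F$'' merely restates $\mathrm{Jac}_{\Omega}=0$. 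Third, any nonzero adjustment of $\Omega$ changes $d\Omega$ and generates new quadratic terms, so a fixed-point argument for the full nonlinear equation would be needed, and none is given.

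Steps~2 and~4 are also incompatible unless $\psi\equiv 0$. For bilinear $\Omega$, substituting $(tx,ty)$ into $\|\Omega(x,y)\|\leq 2C_{2}\|x\|\|y\|(\|x\|+\|y\|)$ gives $\|\Omega(x,y)\|\leq 2C_{2}\,t\,\|x\|\|y\|(\|x\|+\|y\|)$ for all $t\in(0,1]$. Hence $\Omega\equiv 0$, which is compatible with $d\Omega=\psi$ only if $\psi\equiv 0$. The operator-norm bound alone already forces this. On bilinear maps $\|\cdot\|_{\varepsilon}$ is the global operator norm. Expanding $\mathrm{Jac}_{\{\cdot,\cdot\}}\equiv 0$ term by term and using $\|\LL(x,y)\|\leq A\|x\|\|y\|$ then gives $\|\psi(x,y,z)\|\leq 3\bigl(8AC_{2}\varepsilon+16C_{2}^{2}\varepsilon^{2}\bigr)\|x\|\|y\|\|z\|$ for every $\varepsilon\in(0,\epsStar)$, and letting $\varepsilon\to 0$ yields $\psi\equiv 0$. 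Read literally in this bilinear setting, the statement therefore has content only when $\psi\equiv 0$, where $\Omega=0$ works trivially. No completion of your Steps~2--4 can produce a nonzero correction with the stated bounds. In your proposal, as in the paper, the exact-Jacobi conclusion rests entirely on the external citation, and you should present it that way rather than as the output of Steps~2--3.

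Two smaller points. Remark~\ref{rem:d2zero} concerns $d^{2}=0$, not $d\psi=0$; the latter is the Bianchi identity for the Jacobiator, which holds for any skew $\LL$. Also, Corollary~\ref{cor:exact_rejection} never addresses $\Omega(x,y)$, and your claim that $\Omega$ is ``built from elements of $\mathrm{im}(d)$'' is asserted rather than derived from \eqref{eq:Omega_def}.
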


\begin{remark}[Residual decomposition used in the control application]
\label{rem:Cnc_decomposition}
For the sliding-mode application below it is convenient to record the
following \emph{triangle-inequality} bound, which is what the symbol
$\Cnc$ denotes throughout the paper. Setting
\begin{equation}\label{eq:R0_def}
R_{0}(x,y,z) := \mathcal{J}(x,y,z) - (d\omega_{0})(x,y,z),
\end{equation}
one has, by the triangle inequality and the structural identity
$d\omega_{0}=\Pi_{\mathrm{cone}}(\psi)$
(\cite[Lemma~B.1]{Athmouni2026}, which yields
$\|d\omega_{0}\|_{\varepsilon}\leq\|\Pi_{\mathrm{cone}}\|\,\|\psi\|_{\varepsilon}
\leq 6C_{2}$ since $\Pi_{\mathrm{cone}}$ has operator norm at most one),
\begin{equation}\label{eq:Cnc_def}
\|R_{0}(x,y,z)\| \leq \Cnc\,\|x\|\|y\|\|z\|,
\qquad
\Cnc := 12C_{2}.
\end{equation}
For consistency with the original bookkeeping convention in the
control estimates below, we retain the symbolic decomposition
$\Cnc=3\|\omega_{0}\|_{\varepsilon}+6C_{2}$ as an upper bound used whenever
$\|d\omega_{0}\|_{\varepsilon}\leq 3\|\omega_{0}\|_{\varepsilon}$ is invoked;
this latter bound is \emph{not} sharp by triangle inequality alone (the
correct triangle-only bound is $\|d\omega_{0}\|_{\varepsilon}\leq
6A\|\omega_{0}\|_{\varepsilon}$, see the Chevalley--Eilenberg cocycle formula),
but the sharper structural identity above always yields a tighter result.

\textbf{Caveat.} The bound~\eqref{eq:Cnc_def} is \emph{not} sharper than
$12C_{2}$; it is recorded only as a uniform place-holder in the
control estimates. The substantive rigidity content is the exact identity
$\mathrm{Jac}_{\{\cdot,\cdot\}}\equiv 0$ of Theorem~\ref{thm:rigidity}.
The structural bound $\|\omega_{0}\|_{\varepsilon}\leq 6C_{0}C_{2}$
(\cite[Lemma~4.10]{Athmouni2026}) gives an alternative bookkeeping bound
$3\|\omega_{0}\|_{\varepsilon}+6C_{2}\leq(18C_{0}+6)C_{2}$, with
$C_{0}>0$ depending only on the geometry of the finite-dimensional space~$F$.
\end{remark}

\begin{proof}
This is~\cite[Theorem~4.12 and Proposition~4.3]{Athmouni2026}.
Corollary~\ref{cor:homotopy_transfer} transfers~\eqref{eq:Omega_norm} to the
left-module convention. The bracket-image conclusion is restated and proved
in Corollary~\ref{cor:exact_rejection}.
\end{proof}

\begin{remark}[On the withdrawn constant $1/12$]
\label{rem:withdrawn_1_12}
Earlier drafts of this manuscript claimed a numerical bound
$\|\omega_{0}\|_{\mathrm{op}}\leq 1/12$ citing
``\cite[Theorem~3.4]{Athmouni2026}''. The published version
of~\cite{Athmouni2026} does not establish any universal numerical bound of
this form; the explicit estimates are
$\|\Omega\|_{\varepsilon}\leq 4C_{2}\varepsilon$
(\cite[Prop.~4.3]{Athmouni2026}) and
$\|\omega_{0}\|_{\varepsilon}\leq 6C_{0}C_{2}$
(\cite[Lemma~4.10]{Athmouni2026}). All downstream estimates that
previously relied on a universal constant $1/12$ have been replaced by the
problem-dependent bound through $C_{0},C_{2}$, with explicit numerical
values supplied in the worked example of Section~\ref{sec:sim}.
\end{remark}

\begin{remark}[Relation to Operator-Block Matrices]
Theorem~\ref{thm:norm_transfer} relates to the work on invertible
operator-block matrices over scaled hypercomplex numbers~\cite{Alpay2023OBM}.
The cited reference establishes algebraic invertibility criteria, whereas
our result provides an analytic tool for transferring quantitative bounds
across module conventions---a prerequisite for applying rigidity estimates
in left-module formulations.
\end{remark}

\subsection{Inner Product, Matrices, and Eigenvalues}

\begin{definition}[Real-Valued Inner Product and Norm]\label{def:inner}
\begin{equation}\label{eq:inner}
\innerR{x}{y}
= \Real\!\left(\sum_{i=1}^{n}\bar{x}_{i}\,y_{i}\right) \in \RR,
\qquad \|x\| = \sqrt{\innerR{x}{x}}.
\end{equation}
This is symmetric, $\RR$-bilinear, positive definite, and preserved by $\Phiconj$.
\end{definition}

\begin{lemma}[Cauchy--Schwarz]\label{lem:CS}
$|\innerR{x}{y}| \leq \|x\|\,\|y\|$ for all $x,y \in \HH^{n}$.
\end{lemma}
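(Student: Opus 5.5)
The plan is to reduce the statement to the ordinary Cauchy--Schwarz inequality in $\RR^{4n}$ through the real coordinates of the quaternions. Write each component as $x_{i}=x_{i0}+x_{i1}\mathbf{i}+x_{i2}\mathbf{j}+x_{i3}\mathbf{k}$, and similarly for $y_{i}$. Then compute the real part of $\bar{x}_{i}y_{i}$ directly from the multiplication table. Only the products of matching basis units contribute a real part: we have $\mathbf{i}\cdot\mathbf{i}=-1$ and the conjugate supplies the extra minus sign, while the cross terms such as $\mathbf{i}\mathbf{j}=\mathbf{k}$ are purely imaginary. This gives
\[
\Real(\bar{x}_{i}y_{i})=x_{i0}y_{i0}+x_{i1}y_{i1}+x_{i2}y_{i2}+x_{i3}y_{i3}.
\]
Summing over $i$ yields $\innerR{x}{y}=\sum_{i,k}x_{ik}y_{ik}$, which is the Euclidean inner product of the real coordinate vectors in $\RR^{4n}$. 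This is the real representation $\Phi$ alluded to in Remark~\ref{rem:convention}, but the identity is elementary and can be verified directly.

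Specializing to $y=x$ gives $\|x\|^{2}=\sum_{i,k}x_{ik}^{2}$, which is consistent with $\|q\|^{2}=q\bar{q}$ from the quaternionic algebra subsection. So $\|\cdot\|$ is exactly the Euclidean norm of the coordinate vector. The classical Cauchy--Schwarz inequality in $\RR^{4n}$ then gives $|\innerR{x}{y}|\le\|x\|\,\|y\|$ immediately.

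If one prefers a coordinate-free argument, one can instead use only that $\innerR{\cdot}{\cdot}$ is symmetric, $\RR$-bilinear and positive semidefinite, as asserted in Definition~\ref{def:inner}. Expand $0\le\|x-ty\|^{2}=\|x\|^{2}-2t\innerR{x}{y}+t^{2}\|y\|^{2}$ for real $t$. If $y=0$ the claim is trivial. Otherwise the quadratic in $t$ has a nonpositive discriminant, which gives the inequality. The only step that needs care is symmetry of the real part, namely $\Real(\bar{x}_{i}y_{i})=\Real(\overline{\bar{y}_{i}x_{i}})=\Real(\bar{y}_{i}x_{i})$, which follows from anti-multiplicativity. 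There is no real obstacle here.
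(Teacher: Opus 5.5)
Your proof is correct and follows the paper's route. You identify $\innerR{x}{y}$ with the Euclidean inner product of the real coordinate vectors in $\RR^{4n}$ (the paper's isometry $\Phi$) and then apply the classical Cauchy--Schwarz inequality there; your explicit check that $\Real(\bar{x}_{i}y_{i})=\sum_{k}x_{ik}y_{ik}$ supplies the ``direct calculation'' the paper leaves to its reference, and your discriminant argument is a valid self-contained alternative.
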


\begin{proof}
The embedding $\Phi : \HH^{n} \to \RR^{4n}$ is an isometry satisfying
$\innerR{x}{y} = \langle \Phi(x), \Phi(y) \rangle_{\RR^{4n}}$
(this follows by direct calculation from $\innerR{x}{y}=\Real(x\trans y)$
and the conjugation properties of~\cite[Section~2]{Zhang1997}). The
standard Cauchy--Schwarz
inequality in $\RR^{4n}$ gives:
\[
|\innerR{x}{y}|
= |\langle\Phi(x),\Phi(y)\rangle_{\RR^{4n}}|
\leq \|\Phi(x)\|_{\RR^{4n}}\|\Phi(y)\|_{\RR^{4n}}
= \|x\|\,\|y\|. \qedhere
\]
\end{proof}

\begin{definition}[Conjugate Transpose, Hermitian, Positive Definite]
$A\trans = \bar{A}^{T}$. $P \in \HH^{n\times n}$ is Hermitian if $P = P\trans$;
positive definite ($P \succ 0$) if $x\trans Px > 0$ for all $x \neq 0$.
\end{definition}

\begin{lemma}[Real-Valued Quadratic Form]\label{lem:quadratic}
If $P = P\trans \in \HH^{n\times n}$, then $x\trans Px \in \RR$ for all $x \in \HH^{n}$.
\end{lemma}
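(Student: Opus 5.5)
The plan is to show that the scalar $s := x\trans P x$ equals its own quaternionic conjugate. A quaternion $q$ with $\bar q = q$ has vanishing imaginary part, so $q \in \RR$. The argument therefore reduces to computing $\overline{x\trans P x}$ entrywise.

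First, expand $x\trans P x = \sum_{i,j} \bar{x}_i P_{ij} x_j$. Conjugating and using anti-multiplicativity $\overline{pqr} = \bar r\,\bar q\,\bar p$ (two applications of $\overline{pq}=\bar q\bar p$) gives
\[
\overline{x\trans P x} = \sum_{i,j} \bar{x}_j\, \overline{P_{ij}}\, x_i .
\]
Next, invoke the Hermitian hypothesis $P = P\trans$. Entrywise this reads $P_{ji} = \overline{P_{ij}}$, so the sum becomes $\sum_{i,j} \bar{x}_j P_{ji} x_i$. Relabeling $i \leftrightarrow j$ recovers $\sum_{i,j}\bar x_i P_{ij} x_j = x\trans P x$.

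Finally, conclude from $\bar s = s$ that $s_1 = s_2 = s_3 = 0$, i.e.\ $s \in \RR$. Equivalently, $s = \tfrac12(s + \bar s) = \Real(s)$.

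No step is a real obstacle. The only point requiring care is the order of factors when conjugating a triple product in the noncommutative $\HH$: the reversal $\overline{\bar x_i P_{ij} x_j} = \bar x_j \overline{P_{ij}} x_i$ must be written in exactly this order, and it must match the Hermitian condition $(P\trans)_{ij} = \overline{P_{ji}}$ without commuting any quaternion factors.
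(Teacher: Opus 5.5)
Your proof is correct and is essentially the paper's ``alternative direct argument''. There, the $1\times 1$ quaternion $q = x\trans Px$ is shown to satisfy $q\trans = \bar q = q$ via the reversal rule $(ABC)\trans = C\trans B\trans A\trans$; your entrywise computation with $\overline{pqr} = \bar r\,\bar q\,\bar p$ and $P_{ji} = \overline{P_{ij}}$ is exactly that rule verified by hand, so your version is self-contained.

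It is also the more informative of the paper's two routes. The first one (through $\Phi$) really only identifies $\Real(x\trans Px)$ with $\langle \Phi(P)\Phi(x),\Phi(x)\rangle_{\RR^{4n}}$, whereas your argument explicitly shows that the imaginary part vanishes.
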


\begin{proof}
Via the real representation $\Phi : \HH^{n} \to \RR^{4n}$ of
Remark~\ref{rem:real_rep}, we have $x\trans Px = \langle \Phi(P)\Phi(x),
\Phi(x)\rangle_{\RR^{4n}}$ by the homomorphism property
$\Phi(Px)=\Phi(P)\Phi(x)$ (\cite[Theorem~4.2]{Zhang1997}) and the isometry of
$\Phi$. Since $P = P\trans$ implies $\Phi(P) = \Phi(P)^{T}$
(Proposition~\ref{prop:real_rep_ops}(iii)), the matrix $\Phi(P)$ is real and
symmetric, so the standard real inner product $\langle \Phi(P)\Phi(x),
\Phi(x)\rangle_{\RR^{4n}}$ is manifestly real. Hence $x\trans Px \in \RR$.

\medskip
\noindent\textit{Alternative direct argument (no representation theory).}
Consider $x\trans Px \in \HH$ as a $1\times 1$ quaternionic scalar. Applying
the anti-automorphism $(ABC)\trans = C\trans B\trans A\trans$
of~\cite[Theorem~4.1]{Zhang1997} gives
$(x\trans Px)\trans = x\trans P\trans x = x\trans P x$,
where we used $(x\trans)\trans = x$ and $P\trans = P$.
For any $q \in \HH$, the equation $q\trans = q$ means $\bar{q} = q$
(since for a $1\times 1$ matrix, the conjugate transpose equals the quaternionic
conjugate), which forces $q \in \RR$.
Hence $x\trans Px \in \RR$.
\end{proof}

\begin{remark}[Real Representation]\label{rem:real_rep}
The isometric embedding $\Phi : \HH^{n} \to \RR^{4n}$ extends to matrices.
For the left-module convention, $\phi(q)$ represents left multiplication by $q$:
\[
\phi(q) = \begin{bmatrix}
q_{0} & -q_{1} & -q_{2} & -q_{3} \\
q_{1} &  q_{0} & -q_{3} &  q_{2} \\
q_{2} &  q_{3} &  q_{0} & -q_{1} \\
q_{3} & -q_{2} &  q_{1} &  q_{0}
\end{bmatrix}.
\]
This matrix is generally neither symmetric nor skew-symmetric.
\end{remark}

\begin{remark}[Use of the real representation: relation to existing tools]
\label{rem:phi_role}
The map $\Phi$ of Remark~\ref{rem:real_rep} is the real form of the
complex adjoint representation studied in detail
by~\cite{Zhang1997}: Zhang uses the complex adjoint
$\chi_A\in\CC^{2n\times 2n}$ ($A = A_1+A_2 j$ with $A_1,A_2$ complex),
and the real representation $\Phi(A)\in\RR^{4n\times 4n}$ used here is
obtained by further decomposing each complex entry of $\chi_A$ into its
real and imaginary parts. The algebraic properties
we use ($\Phi$ is an $\RR$-linear $4n$-isometry, an algebra homomorphism
$\Phi(AB) = \Phi(A)\Phi(B)$, and preserves Hermitian inner products)
follow from the corresponding properties of $\chi_A$ established
in~\cite{Zhang1997}. Our contribution is not the construction of $\Phi$ but
three downstream uses:
\begin{enumerate}
\item[\textup{(i)}] \textbf{Exact (not bounded) norm transfer
under conjugation.} Theorem~\ref{thm:norm_transfer} proves that the
componentwise conjugation $\Phiconj$ implements the left/right-module
anti-isomorphism with \emph{exact} operator-norm preservation
$\|T^{L}\|_{\mathrm{op}} = \|T\|_{\mathrm{op}}$. This is sharper than what $\Phi$
alone provides (which only yields a sub-multiplicative bound) and is what
allows quantitative bounds to transfer between module conventions \emph{without
constant loss}.
\item[\textup{(ii)}] \textbf{Combined isometry-and-sub-multiplicativity.} Our
constants ($\Cnc$, $c_{B}\cBplus$, etc.) systematically combine
\emph{isometry} (via $\|x\|_{\HH^{n}} = \|\Phi(x)\|_{\RR^{4n}}$) and
\emph{sub-multiplicativity} (via $\|\Phi(A)\|_{\mathrm{op}} \leq C\|A\|_{\mathrm{op}}$
for an absolute constant) in a controlled manner. Each application of these
inequalities is explicit in the proofs, so no hidden constants accumulate.
\item[\textup{(iii)}] \textbf{Projection lifting.} Lemma~\ref{lem:BtB_invertible}
constructs the projection $\Pi_{\mathrm{Im}(B)}$ on the quaternionic side
by pulling back the standard real projection $\Pi_{B_{\RR}}$ via $\Phi^{-1}$.
The well-definition of this lift relies precisely on $\Phi$ being a bijection,
and the projection identity $B B_{\RR}^{+}\Phi(R_{0}) = \Phi(\Pi_{\mathrm{Im}(B)}R_{0})$
underlies the CMC analysis of Section~\ref{sec:cohomology}.
\end{enumerate}
Thus $\Phi$ functions as a classical computational tool; the cohomological
identities, GOSL bounds, and exact-norm transfer are formulated and proved
\emph{using} this tool.
\end{remark}

\begin{lemma}[Homomorphism Property]\label{lem:phi_homomorphism}
$\Phi(AB) = \Phi(A)\Phi(B)$ for all conformable quaternionic matrices $A, B$.
\end{lemma}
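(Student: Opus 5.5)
The argument reduces $\Phi(AB)=\Phi(A)\Phi(B)$ to the scalar case and then uses block-matrix multiplication. First we fix the convention. For a quaternionic matrix $A=(a_{ij})\in\HH^{m\times p}$, $\Phi(A)\in\RR^{4m\times 4p}$ is the block matrix whose $(i,j)$ block is $\phi(a_{ij})$, with $\phi$ as in Remark~\ref{rem:real_rep}. For a column vector $x\in\HH^{p}$, $\Phi(x)$ stacks the coordinate vectors $(x_{i,0},x_{i,1},x_{i,2},x_{i,3})^{T}$. This convention makes the vector case agree with the matrix case: $\phi(q)$ applied to the coordinate vector of $r$ is the coordinate vector of $qr$.

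\textbf{Step 1 (scalar case).} We show that $\phi:\HH\to\RR^{4\times 4}$ is an $\RR$-algebra homomorphism, i.e.\ $\phi(pq)=\phi(p)\phi(q)$. The cleanest route is to observe that the $j$-th column of $\phi(q)$ is the coordinate vector of $q\,e_{j}$, where $e_{0}=1$, $e_{1}=\mathbf{i}$, $e_{2}=\mathbf{j}$, $e_{3}=\mathbf{k}$. For instance, the second column $(-q_{1},q_{0},q_{3},-q_{2})^{T}$ is indeed $q\mathbf{i}$. Hence $\phi(q)$ is the matrix of the $\RR$-linear map $L_{q}:r\mapsto qr$ in the basis $(1,\mathbf{i},\mathbf{j},\mathbf{k})$. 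Associativity of $\HH$ gives $L_{pq}=L_{p}\circ L_{q}$, and since composition of linear maps corresponds to the product of their matrices, $\phi(pq)=\phi(p)\phi(q)$. In the same way, additivity $\phi(p+q)=\phi(p)+\phi(q)$ follows because $L_{p+q}=L_{p}+L_{q}$, or directly because the entries of $\phi(q)$ depend linearly on $q$. The only real work in this step is to check the four columns of $\phi(q)$ against the multiplication table $\mathbf{ij}=\mathbf{k}$, $\mathbf{jk}=\mathbf{i}$, $\mathbf{ki}=\mathbf{j}$.

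\textbf{Step 2 (block multiplication).} Let $A\in\HH^{m\times p}$ and $B\in\HH^{p\times r}$. The $(i,k)$ entry of $AB$ is $\sum_{j}a_{ij}b_{jk}$, in this order. Applying $\Phi$ and using Step 1 gives
\[
\Phi(AB)_{(i,k)}=\phi\Bigl(\sum_{j}a_{ij}b_{jk}\Bigr)=\sum_{j}\phi(a_{ij})\phi(b_{jk}).
\]
The right-hand side is precisely the $(i,k)$ block of $\Phi(A)\Phi(B)$ under the rule for multiplying partitioned matrices, so $\Phi(AB)=\Phi(A)\Phi(B)$. The vector case ($r=1$) is the instance where $\phi(b_{jk})$ is replaced by the coordinate vector of $b_{jk}$, and Step 1 covers it as well.

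\textbf{Main obstacle.} The only delicate point is convention consistency. The order $a_{ij}b_{jk}$ must be preserved throughout, and $\phi$ must represent \emph{left} multiplication, as Remark~\ref{rem:real_rep} stipulates. Had $\phi$ encoded right multiplication, it would instead be an anti-homomorphism, and the identity would acquire a transpose or reversal. So the proof stands or falls on verifying in Step 1 that the columns of $\phi(q)$ are $qe_{j}$ rather than $e_{j}q$. Alternatively, the result is \cite[Theorem~4.2]{Zhang1997} transported through the complex adjoint representation as described in Remark~\ref{rem:phi_role}, and one could simply cite it.
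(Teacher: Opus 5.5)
Your proof is correct and follows the same idea as the paper, which simply notes that $\Phi(A)$ is the matrix of the $\RR$-linear map $x\mapsto Ax$, so composition gives $\Phi(AB)=\Phi(A)\Phi(B)$, and cites \cite[Theorem~4.2]{Zhang1997}. Your version makes this explicit in two places the paper leaves implicit: you check that the columns of $\phi(q)$ are the coordinate vectors of $q$, $q\mathbf{i}$, $q\mathbf{j}$, $q\mathbf{k}$ (they are), and you pass from the scalar case to matrices by block multiplication.
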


\begin{proof}
$\Phi(A)$ is the matrix of $x \mapsto Ax$; composing gives
$\Phi(A)\Phi(B) = \Phi(AB)$~\cite[Theorem~4.2]{Zhang1997}.
\end{proof}

\begin{proposition}[Real Representation of Bounded Operators]
\label{prop:real_rep_ops}
The embedding $\Phi: \HH^{n} \to \RR^{4n}$ extends to an algebra
monomorphism $\Phi: \mathcal{B}_{\HH}(\HH^{n}) \to \RR^{4n \times 4n}$
satisfying:
\begin{enumerate}
\item[\textup{(i)}] $\Phi(T_1 T_2) = \Phi(T_1)\Phi(T_2)$ (homomorphism);
\item[\textup{(ii)}] $\|T\|_{\mathrm{op}} = \|\Phi(T)\|_{\mathrm{op}}$ (isometry);
\item[\textup{(iii)}] $T$ is Hermitian ($T = T\trans$) iff $\Phi(T)$ is symmetric.
\end{enumerate}
\end{proposition}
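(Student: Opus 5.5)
I would first fix the convention. A bounded operator $T$ is right-$\HH$-linear, and $\Phi$ is the real-coordinate isomorphism $\HH^{n}\cong\RR^{4n}$, an $\RR$-linear isometric bijection. I would \emph{define} $\Phi(T):=\Phi\circ T\circ\Phi^{-1}\in\RR^{4n\times 4n}$, the real matrix of $T$ viewed as an $\RR$-linear map. This is consistent with Lemma~\ref{lem:phi_homomorphism}: for $T=x\mapsto Ax$ it gives the matrix of $x\mapsto Ax$.

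Injectivity is immediate, since $\Phi(T)=0$ forces $T=0$ because $\Phi$ is bijective on vectors. $\RR$-linearity of $T\mapsto\Phi(T)$ is clear.

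\textbf{(i)} This follows by telescoping: $\Phi(T_1)\Phi(T_2)=\Phi T_1\Phi^{-1}\Phi T_2\Phi^{-1}=\Phi(T_1T_2)$.

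\textbf{(ii)} This is the same sphere-substitution argument as in Theorem~\ref{thm:norm_transfer}(ii). We have
\[
\|\Phi(T)\|_{\mathrm{op}}=\sup_{\|v\|=1}\|\Phi(T\Phi^{-1}v)\|=\sup_{\|v\|=1}\|T\Phi^{-1}v\|=\sup_{\|x\|=1}\|Tx\|.
\]
The second equality uses the isometry of $\Phi$. The third uses the substitution $x=\Phi^{-1}v$, which maps the unit sphere of $\RR^{4n}$ bijectively onto that of $\HH^{n}$.

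\textbf{(iii)} This is the only step needing care, and I expect it to be the main obstacle, because "Hermitian" must be tied to the real inner product. The key identity I would prove is
\[
\innerR{Tx}{y}=\innerR{x}{T\trans y}.
\]
For the matrix case this reads $\Real\bigl((Tx)\trans y\bigr)=\Real\bigl(x\trans T\trans y\bigr)$, using $(Tx)\trans=x\trans T\trans$ from the anti-automorphism of~\cite[Theorem~4.1]{Zhang1997}. Transporting through $\Phi$, which carries $\innerR{\cdot}{\cdot}$ to the Euclidean inner product (Lemma~\ref{lem:CS}), gives $\Phi(T)^{T}=\Phi(T\trans)$, because the adjoint with respect to the Euclidean product is the transpose.

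Then $T=T\trans$ implies $\Phi(T)=\Phi(T)^{T}$. Conversely, if $\Phi(T)$ is symmetric, then $\Phi(T)=\Phi(T\trans)$, and injectivity gives $T=T\trans$.

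One subtlety needs checking. Right-linear operators on $\HH^{n}$ are exactly the left matrix actions $x\mapsto Ax$, which I would verify by evaluating on the standard basis $e_i$ and using $x=\sum e_ix_i$. So $T\trans$ makes sense and is again right-linear. This is what closes the converse direction of (iii).
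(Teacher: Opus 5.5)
Your proof is correct and follows the same route as the paper: (i) comes from composing real matrices, (ii) from the isometry of $\Phi$, and (iii) from the identity $\Phi(T\trans)=\Phi(T)^{T}$ combined with injectivity. The only difference is in (iii): you derive that identity from the adjoint relation $\innerR{Tx}{y}=\innerR{x}{T\trans y}$ transported through $\Phi$, and you spell out the injectivity and the matrix form of right-linear operators needed for the converse, whereas the paper simply reads the identity off the definition of $\Phi$ as the real form of the left-multiplication representation, citing the corresponding complex-adjoint identity.
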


\begin{proof}
Parts~(i) follows from Lemma~\ref{lem:phi_homomorphism}. Part~(ii) follows
from the isometry $\Phi$. Part~(iii) follows from $\Phi(A\trans) = \Phi(A)^{T}$,
a direct consequence of the definition of $\Phi$ as the real form of the
left-multiplication representation (cf.~\cite[Theorem~4.2]{Zhang1997} for
the corresponding identity $\chi_{A^*}=(\chi_A)^*$ in the complex adjoint setting).
\end{proof}

\begin{lemma}[Projection onto $\mathrm{Im}(B)$ via the Real Representation]\label{lem:BtB_invertible}
Under Assumption~\ref{ass:rank}, define $B_{\RR} = \Phi(B) \in \RR^{4n \times 4m}$.
Then $B_{\RR}$ has full column rank $4m$, and the orthogonal projection onto
$\mathrm{Im}(B_{\RR})$ is given by $\Pi_{B_{\RR}} = B_{\RR} (B_{\RR}^T B_{\RR})^{-1} B_{\RR}^T$.
For any $x \in \HH^n$, define the quaternionic projection operator by:
\[
\Pi_{\mathrm{Im}(B)}(x) = \Phi^{-1}\left( \Pi_{B_{\RR}} \Phi(x) \right).
\]
This operator satisfies $\Pi_{\mathrm{Im}(B)}^2 = \Pi_{\mathrm{Im}(B)}$,
$\mathrm{Im}(\Pi_{\mathrm{Im}(B)}) = \mathrm{Im}(B)$, and for any $y = Bu \in \mathrm{Im}(B)$,
we have $\Pi_{\mathrm{Im}(B)}(y) = y$. Moreover the operator norm satisfies
$\|\Pi_{\mathrm{Im}(B)}\|_{\mathrm{op}} \leq 1$.
\end{lemma}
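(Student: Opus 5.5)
The plan is to transport everything through the real representation $\Phi$ and use elementary facts about real orthogonal projections. We take Assumption~\ref{ass:rank} to mean that $B\in\HH^{n\times m}$ has full column rank over $\HH$, i.e.\ $Bu=0$ forces $u=0$.

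\textbf{Full column rank of $B_{\RR}$.} Suppose $B_{\RR}v=0$ for some $v\in\RR^{4m}$. Since $\Phi$ is a bijection $\HH^{m}\to\RR^{4m}$, we may write $v=\Phi(u)$. By the homomorphism property (Lemma~\ref{lem:phi_homomorphism}, applied to $B$ and the column $u$),
\[
\Phi(Bu)=\Phi(B)\Phi(u)=B_{\RR}v=0 .
\]
Injectivity of $\Phi$ gives $Bu=0$, hence $u=0$ and $v=0$. So $\ker B_{\RR}=\{0\}$ and $B_{\RR}$ has rank $4m$. Then $B_{\RR}^{T}B_{\RR}$ is symmetric positive definite, hence invertible. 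The formula $\Pi_{B_{\RR}}=B_{\RR}(B_{\RR}^{T}B_{\RR})^{-1}B_{\RR}^{T}$ is the standard orthogonal projector, since it is symmetric and idempotent with range $\mathrm{Im}(B_{\RR})$.

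\textbf{Algebraic properties of the lift.} Idempotence follows by conjugation:
\[
\Pi_{\mathrm{Im}(B)}^{2}=\Phi^{-1}\Pi_{B_{\RR}}\Phi\,\Phi^{-1}\Pi_{B_{\RR}}\Phi=\Phi^{-1}\Pi_{B_{\RR}}^{2}\Phi=\Pi_{\mathrm{Im}(B)} .
\]
For the range, we will show $\Phi(\mathrm{Im}(B))=\mathrm{Im}(B_{\RR})$. The inclusion ``$\subseteq$'' holds because $\Phi(Bu)=B_{\RR}\Phi(u)$. The inclusion ``$\supseteq$'' holds because any $B_{\RR}v$ equals $\Phi(B\,\Phi^{-1}(v))$. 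It follows that
\[
\mathrm{Im}(\Pi_{\mathrm{Im}(B)})=\Phi^{-1}(\mathrm{Im}\,\Pi_{B_{\RR}})=\Phi^{-1}(\mathrm{Im}\,B_{\RR})=\mathrm{Im}(B).
\]
For $y=Bu$ we have $\Phi(y)\in\mathrm{Im}(B_{\RR})$, which $\Pi_{B_{\RR}}$ fixes, so $\Pi_{\mathrm{Im}(B)}y=y$.

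\textbf{Norm bound.} Using the isometry of $\Phi$ (Proposition~\ref{prop:real_rep_ops}(ii), or directly $\|x\|=\|\Phi(x)\|_{\RR^{4n}}$ from the proof of Lemma~\ref{lem:CS}),
\[
\|\Pi_{\mathrm{Im}(B)}x\|=\|\Pi_{B_{\RR}}\Phi(x)\|\le\|\Phi(x)\|=\|x\| ,
\]
since a real orthogonal projector has norm at most $1$: by Pythagoras, $\|w\|^{2}=\|\Pi w\|^{2}+\|(I-\Pi)w\|^{2}$.

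\textbf{Main subtlety.} The homomorphism identity $\Phi(Bu)=\Phi(B)\Phi(u)$ must hold for a rectangular $B$ acting on vectors, and it must be the same $\Phi$ under which the vector embedding is isometric. Lemma~\ref{lem:phi_homomorphism} covers conformable matrices, and a vector is an $m\times 1$ matrix, provided its image is read as the column $\Phi(u)\in\RR^{4m}$. I would verify this identification explicitly, blockwise via $\phi(q)$ acting on the coordinate vector of $q$, before using it. Beyond that, everything is routine real linear algebra.
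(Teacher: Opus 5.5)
Your proof is correct and follows the paper's argument essentially step by step: you show $\ker B_{\RR}=\{0\}$ via the homomorphism property and injectivity of $\Phi$, prove idempotence by conjugation, get the range through $\Phi(\mathrm{Im}(B))=\mathrm{Im}(B_{\RR})$, and obtain the norm bound from the isometry of $\Phi$. If anything you are more careful than the paper, which only asserts $\mathrm{Im}(B_{\RR})=\Phi(\mathrm{Im}(B))$ without checking both inclusions, and which does not address how the homomorphism property for conformable matrices specializes to matrix--vector products.
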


\begin{proof}
\textbf{Full column rank of $B_{\RR}$.}
The real representation $\Phi : \HH^{m} \to \RR^{4m}$ is an $\RR$-linear
isomorphism (bijection). Hence every $v \in \RR^{4m}$ has a unique preimage
$u = \Phi^{-1}(v) \in \HH^{m}$. If $B_{\RR}v = 0$, then
$\Phi(B)\Phi(u) = 0$, so $\Phi(Bu) = 0$ by Lemma~\ref{lem:phi_homomorphism},
and therefore $Bu = 0$ by injectivity of $\Phi$. Since $B$ has full column
rank $m$ over $\HH$ (Assumption~\ref{ass:rank}), we conclude $u = 0$, hence
$v = \Phi(u) = 0$. Thus $\ker(B_{\RR}) = \{0\}$ and $\mathrm{rank}(B_{\RR}) = 4m$.

Consequently $B_{\RR}^T B_{\RR} \in \RR^{4m \times 4m}$
is symmetric positive definite and invertible.

Define $\Pi_{B_{\RR}} = B_{\RR} (B_{\RR}^T B_{\RR})^{-1} B_{\RR}^T$. This is the
standard orthogonal projection onto $\mathrm{Im}(B_{\RR})$ in $\RR^{4n}$,
satisfying $\Pi_{B_{\RR}}^2 = \Pi_{B_{\RR}}$ and $\Pi_{B_{\RR}} \Phi(B) = \Phi(B)$.

Now define $\Pi_{\mathrm{Im}(B)} : \HH^n \to \HH^n$ by
$\Pi_{\mathrm{Im}(B)}(x) = \Phi^{-1}(\Pi_{B_{\RR}} \Phi(x))$. This is well-defined
because $\Phi$ is a bijection between $\HH^n$ and $\RR^{4n}$.

\textbf{Idempotence:} $\Pi_{\mathrm{Im}(B)}^2(x) = \Phi^{-1}(\Pi_{B_{\RR}} \Phi(\Phi^{-1}(\Pi_{B_{\RR}} \Phi(x)))) = \Phi^{-1}(\Pi_{B_{\RR}}^2 \Phi(x)) = \Phi^{-1}(\Pi_{B_{\RR}} \Phi(x)) = \Pi_{\mathrm{Im}(B)}(x)$.

\textbf{Image:} For any $x \in \HH^n$, $\Pi_{\mathrm{Im}(B)}(x) \in \mathrm{Im}(B)$
because $\Pi_{B_{\RR}} \Phi(x) \in \mathrm{Im}(B_{\RR}) = \Phi(\mathrm{Im}(B))$.
Conversely, for $y = Bu \in \mathrm{Im}(B)$:
$\Pi_{\mathrm{Im}(B)}(y) = \Phi^{-1}(\Pi_{B_{\RR}} \Phi(Bu)) = \Phi^{-1}(\Pi_{B_{\RR}} B_{\RR} \Phi(u)) = \Phi^{-1}(B_{\RR} \Phi(u)) = Bu = y$.

\textbf{Norm bound:} since $\Phi$ is an isometry and $\Pi_{B_{\RR}}$ is an
orthogonal projector with $\|\Pi_{B_{\RR}}\|_{\mathrm{op}} \leq 1$:
$\|\Pi_{\mathrm{Im}(B)}(x)\| = \|\Pi_{B_{\RR}}\Phi(x)\| \leq \|\Phi(x)\| = \|x\|$.

Thus $\Pi_{\mathrm{Im}(B)}$ is indeed the projection onto $\mathrm{Im}(B)$.
\end{proof}

\begin{definition}[Quaternionic Eigenvalues]\label{def:eigenvalues}
For Hermitian $P \in \HH^{n\times n}$, set
$\lambda_{\max}(P) := \lambda_{\max}(\Phi(P))$ and
$\lambda_{\min}(P) := \lambda_{\min}(\Phi(P))$,
where $\Phi(P) \in \RR^{4n\times4n}$ is symmetric with real eigenvalues.
\end{definition}

\begin{remark}[Symmetry of $\Phi(P)$ for Hermitian $P$]\label{rem:phi_symmetric}
For Hermitian $P = P\trans \in \HH^{n\times n}$, the real representation $\Phi(P)$
is a \emph{symmetric} matrix in $\RR^{4n\times 4n}$. This follows from
$\Phi(A\trans) = \Phi(A)^{T}$, a direct consequence of the definition of
$\Phi$ as the real form of the left-multiplication
representation (cf.~\cite[Theorem~4.2]{Zhang1997}). Consequently the standard Rayleigh
quotient bounds on the symmetric matrix $\Phi(P)$ justify
Lemma~\ref{lem:eigenvalue_bound} below. For general (non-Hermitian) matrices $A$,
$\Phi(A)$ need not be symmetric; in particular $\Phi(A_{s})$ is generally not
symmetric.
\end{remark}

\begin{remark}[Eigenvectors in the Quaternionic Setting]\label{rem:eigenvectors}
For Hermitian $P \in \HH^{n\times n}$, the notion of eigenvector aligned with
$\lambda_{\max}(P)$ refers to eigenvectors of $\Phi(P) \in \RR^{4n\times4n}$.
A quaternionic vector $x \in \HH^{n}$ is aligned with such an eigenvector if
$\Phi(x)$ belongs to the corresponding eigenspace of $\Phi(P)$. This is a
measure-zero event for generic $x$, so the inclusion
$\overline{B}_{M_{0}} \subseteq \mathcal{S}$ holds strictly for generic boundary
points.
\end{remark}

\begin{lemma}[Eigenvalue Bounds]\label{lem:eigenvalue_bound}
For Hermitian $P \in \HH^{n\times n}$ and $x \in \HH^{n}$ (so that
$x^{\dagger} P x \in \RR$ by Lemma~\ref{lem:quadratic}):
\begin{equation}\label{eq:P2bound}
\lambda_{\min}(P)\|x\|^{2} \leq x\trans Px \leq \lambda_{\max}(P)\|x\|^{2}.
\end{equation}
For $P \succ 0$:
$x\trans P^{2}x \leq \tfrac{\lambda_{\max}(P)^{2}}{\lambda_{\min}(P)}\,x\trans Px$
and $\|Px\|^{2} = x\trans P^{2}x \leq \lambda_{\max}(P)^{2}\|x\|^{2}$,
so $\|Px\| \leq \lambda_{\max}(P)\|x\|$ for all $x \in \HH^{n}$.

For Hermitian $P$ which is \emph{not necessarily positive semi-definite}, the
operator-norm bound becomes
\begin{equation}\label{eq:Pop_general}
\|Px\| \leq \|P\|_{\mathrm{op}}\|x\| = \max\{|\lambda_{\max}(P)|,|\lambda_{\min}(P)|\}\|x\|,
\end{equation}
and the bound $\|Px\|\leq\lambda_{\max}(P)\|x\|$ may fail in general.
\end{lemma}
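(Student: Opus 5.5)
The plan is to reduce everything to the real symmetric matrix $\Phi(P)\in\RR^{4n\times4n}$ and apply standard real spectral theory there. The tools are Lemma~\ref{lem:quadratic} and Remark~\ref{rem:phi_symmetric}: $\Phi(P)$ is symmetric, and $x\trans Px=\langle\Phi(P)\Phi(x),\Phi(x)\rangle_{\RR^{4n}}$. We also use the isometry $\|\Phi(x)\|=\|x\|$. The first inequality~\eqref{eq:P2bound} is then the Rayleigh quotient bound for the real symmetric matrix $\Phi(P)$, applied to $v=\Phi(x)$. By Definition~\ref{def:eigenvalues}, its extreme eigenvalues are exactly $\lambda_{\min}(P)$ and $\lambda_{\max}(P)$.

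For $P\succ0$, we first check that $\Phi(P)$ is positive definite. This holds because $\langle\Phi(P)v,v\rangle=x\trans Px>0$ for $v=\Phi(x)\neq0$, using that $\Phi$ is a bijection. Hence $\lambda_{\min}(P)>0$. Next we use the homomorphism property (Lemma~\ref{lem:phi_homomorphism}) to get $\Phi(P^{2})=\Phi(P)^{2}$. Then
\[
x\trans P^{2}x=\|\Phi(P)v\|^{2}=\|\Phi(Px)\|^{2}=\|Px\|^{2}.
\]
The quantity $x\trans P^{2}x$ equals $(Px)\trans(Px)$, and its real part is $\|Px\|^2$ by Definition~\ref{def:inner}; it is real by Lemma~\ref{lem:quadratic}, since $P^2$ is Hermitian.

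Diagonalize $\Phi(P)=Q\,\mathrm{diag}(\lambda_i)\,Q^{T}$ and set $w=Q^{T}v$. Then
\[
\|Px\|^{2}=\sum\lambda_i^{2}w_i^{2}\le\lambda_{\max}^{2}\|w\|^{2}=\lambda_{\max}(P)^{2}\|x\|^{2},
\]
and taking square roots gives $\|Px\|\le\lambda_{\max}(P)\|x\|$. For the mixed bound, note that $\lambda_i^{2}\le\lambda_{\max}\lambda_i$ since $0<\lambda_i\le\lambda_{\max}$. Hence $x\trans P^{2}x\le\lambda_{\max}\,x\trans Px$. Since $\lambda_{\max}\le\lambda_{\max}^{2}/\lambda_{\min}$, this implies the stated (weaker) inequality. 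Alternatively, one can combine $\|Px\|^2\le\lambda_{\max}^2\|x\|^2$ with $\|x\|^2\le x\trans Px/\lambda_{\min}$ from~\eqref{eq:P2bound}.

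For Hermitian $P$ that is not necessarily positive semi-definite, the same diagonalization gives $\|Px\|^{2}=\sum\lambda_i^{2}w_i^{2}\le\max_i\lambda_i^{2}\,\|x\|^{2}$. Here $\max_i|\lambda_i|=\max\{|\lambda_{\max}|,|\lambda_{\min}|\}$. This maximum equals $\|\Phi(P)\|_{\mathrm{op}}$, which in turn equals $\|P\|_{\mathrm{op}}$ by Proposition~\ref{prop:real_rep_ops}(ii). That proves~\eqref{eq:Pop_general}. For the failure claim, a one-line example suffices: take $n=1$, $P=-1$, so that $\lambda_{\max}(P)=-1<0\le\|Px\|$.

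The only delicate point is the identification $x\trans P^2x=\|Px\|^2$ through $\Phi$, that is, making sure that $\Phi$ intertwines both products and conjugate transposes. This is supplied by Lemma~\ref{lem:phi_homomorphism} and Proposition~\ref{prop:real_rep_ops}(iii); everything else is routine real linear algebra.
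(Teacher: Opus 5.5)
Your proposal is correct and follows essentially the same route as the paper. Both reduce to the real symmetric matrix $\Phi(P)$, use the Rayleigh quotient for~\eqref{eq:P2bound}, and then use the homomorphism property $\Phi(P^{2})=\Phi(P)^{2}$ with the spectral theorem for the $P\succ 0$ and general Hermitian bounds. Your additions are valid refinements rather than a different argument: you verify $\Phi(P)\succ 0$ directly from the quadratic-form identity (the paper cites Remark~\ref{rem:phi_symmetric}, which only gives symmetry), you obtain the sharper intermediate bound $x^{\dagger}P^{2}x\le\lambda_{\max}(P)\,x^{\dagger}Px$, and you give the explicit counterexample $P=-1$ (with $x\neq 0$).
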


\begin{proof}
By Remark~\ref{rem:phi_symmetric}, $\Phi(P)$ is symmetric. Using the isometry
$\Phi$ and $x\trans Px = \langle\Phi(P)\Phi(x),\Phi(x)\rangle_{\RR^{4n}}$,
one obtains:
\[
\lambda_{\min}(\Phi(P))\|\Phi(x)\|^{2}
\leq \langle\Phi(P)\Phi(x),\Phi(x)\rangle
\leq \lambda_{\max}(\Phi(P))\|\Phi(x)\|^{2},
\]
which gives the first inequality since $\|\Phi(x)\|=\|x\|$ and
$\lambda_{\min/\max}(P) = \lambda_{\min/\max}(\Phi(P))$.

Assume now $P \succ 0$. By Lemma~\ref{lem:phi_homomorphism},
$\Phi(P^{2}) = \Phi(P)^{2}$.
Since $\Phi(P)$ is symmetric and positive definite (Remark~\ref{rem:phi_symmetric}),
its square has all eigenvalues equal to squares of those of $\Phi(P)$
(which are all positive), giving
$\lambda_{\max}(\Phi(P)^{2}) = \lambda_{\max}(\Phi(P))^{2}$.
Hence $x\trans P^{2}x \leq \lambda_{\max}(P)^{2}\|x\|^{2}$.
From $x\trans Px \geq \lambda_{\min}(P)\|x\|^{2}$:
$\|x\|^{2} \leq \lambda_{\min}(P)^{-1}x\trans Px$. Combining:
$x\trans P^{2}x \leq \lambda_{\max}(P)^{2}\cdot\lambda_{\min}(P)^{-1}x\trans Px$.

For the operator norm bound under $P\succ 0$: working in the real representation,
\[
\|Px\|^{2} = \|\Phi(P)\Phi(x)\|_{\RR^{4n}}^{2}
= \langle \Phi(P)^{2}\Phi(x), \Phi(x)\rangle_{\RR^{4n}}
\leq \lambda_{\max}(\Phi(P))^{2}\|\Phi(x)\|^{2}
= \lambda_{\max}(P)^{2}\|x\|^{2},
\]
giving $\|Px\| \leq \lambda_{\max}(P)\|x\|$.

For the general Hermitian case, $\Phi(P)$ is symmetric and the spectral
theorem gives $\|\Phi(P)\|_{\mathrm{op}}=\max\{|\lambda_{\max}|,|\lambda_{\min}|\}$,
yielding~\eqref{eq:Pop_general}.
\end{proof}

\begin{remark}[Spectrum convention via $\Phi$]\label{rem:spectrum_convention}
Throughout this paper, for $P\in\HH^{n\times n}$ Hermitian, the symbols
$\lambda_{\min}(P)$ and $\lambda_{\max}(P)$ denote the smallest and largest
eigenvalues of the symmetric real matrix $\Phi(P)\in\RR^{4n\times 4n}$,
which is well-defined (Proposition~\ref{prop:real_rep_ops}(iii)). Each
eigenvalue of $P$ in the $S$-spectrum sense appears with multiplicity $4$
in $\Phi(P)$, so this convention is consistent with the standard
quaternionic-operator spectrum up to multiplicity. Numerical computations
are performed in $\RR^{4n\times 4n}$ via $\Phi(P)$.
\end{remark}

\begin{lemma}[Spectral Properties of Hermitian Quaternionic Operators]
\label{lem:hermitian_spectrum}
Let $P \in \HH^{n \times n}$ be Hermitian ($P = P\trans$). Then:
\begin{enumerate}
\item[\textup{(i)}] All eigenvalues of $\Phi(P)$ are real;
\item[\textup{(ii)}] $\Phi(P)$ is orthogonally diagonalizable over $\RR$;
\item[\textup{(iii)}] The Rayleigh quotient satisfies
$\lambda_{\min}(P) \leq \frac{x\trans P x}{\|x\|^{2}} \leq \lambda_{\max}(P)$
for all $x \neq 0$.
\end{enumerate}
\end{lemma}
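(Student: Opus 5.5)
The whole lemma reduces to the real spectral theorem, applied to the symmetric matrix $\Phi(P)$. The first step is to note that $P=P\trans$ gives $\Phi(P)=\Phi(P)^{T}$, by Proposition~\ref{prop:real_rep_ops}(iii) or equivalently Remark~\ref{rem:phi_symmetric}. From there, (i) and (ii) are just the finite-dimensional spectral theorem for real symmetric matrices. It says $\Phi(P)=Q\Lambda Q^{T}$ with $Q\in\RR^{4n\times 4n}$ orthogonal and $\Lambda$ real diagonal. In particular, every eigenvalue of $\Phi(P)$ is real, and $\Phi(P)$ is orthogonally diagonalizable over $\RR$. If a self-contained argument for (i) is preferred, take $v\in\CC^{4n}$ with $\Phi(P)v=\lambda v$ and compute $\bar v^{T}\Phi(P)v$ in two ways; symmetry of the real matrix $\Phi(P)$ then forces $\lambda=\bar\lambda$.

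For (iii), fix $x\neq 0$ and set $\xi=\Phi(x)\in\RR^{4n}$. Then $\xi\neq 0$ by injectivity of $\Phi$, and $\|\xi\|=\|x\|$ by the isometry property. I will use the identity $x\trans Px=\langle\Phi(P)\xi,\xi\rangle_{\RR^{4n}}$, which is established in the proof of Lemma~\ref{lem:quadratic}; that lemma also ensures the quotient is real. Writing $\eta=Q^{T}\xi$ gives
\[
\langle\Phi(P)\xi,\xi\rangle=\sum_{k=1}^{4n}\lambda_{k}\eta_{k}^{2},
\qquad \|\eta\|=\|\xi\|,
\]
so the Rayleigh quotient is a convex combination of the eigenvalues $\lambda_{k}$. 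It therefore lies in $[\lambda_{\min}(\Phi(P)),\lambda_{\max}(\Phi(P))]$. By Definition~\ref{def:eigenvalues} this interval is $[\lambda_{\min}(P),\lambda_{\max}(P)]$, and dividing by $\|x\|^{2}=\|\xi\|^{2}$ gives (iii). Alternatively, (iii) is simply Lemma~\ref{lem:eigenvalue_bound} divided by $\|x\|^{2}$.

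I do not expect a real obstacle here. The only point that needs care is the transfer identity $\Phi(P\trans)=\Phi(P)^{T}$ underlying Proposition~\ref{prop:real_rep_ops}(iii). If it needs checking directly, I would verify it entrywise: the claim is $\phi(\bar q)=\phi(q)^{T}$ for the $4\times 4$ block $\phi(q)$ of Remark~\ref{rem:real_rep}, and this holds by inspection since conjugation flips the signs of $q_{1},q_{2},q_{3}$. Transposing the block matrix $\Phi(P)$ then swaps the blocks at positions $(i,j)$ and $(j,i)$ and transposes each block, which matches $(P\trans)_{ij}=\overline{P_{ji}}$.
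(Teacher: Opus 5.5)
Your proposal is correct and follows the paper's proof: you get symmetry of $\Phi(P)$ from Proposition~\ref{prop:real_rep_ops}(iii) and then apply the spectral theorem for real symmetric matrices to obtain (i)--(iii). Your explicit Rayleigh-quotient computation and the entrywise check $\phi(\bar q)=\phi(q)^{T}$ add detail the paper leaves implicit; the check is worth having, since the paper justifies $\Phi(P\trans)=\Phi(P)^{T}$ only by analogy with the complex adjoint.
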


\begin{proof}
By Proposition~\ref{prop:real_rep_ops}(iii), $\Phi(P)$ is symmetric,
hence has real eigenvalues and an orthogonal eigenbasis. The Rayleigh
quotient bounds follow from the spectral theorem for real symmetric matrices.
\end{proof}

\begin{remark}[Connection to $S$-Spectrum Theory]
For general (non-Hermitian) $T \in \mathcal{B}_{\HH}(\HH^{n})$, the classical
notion of spectrum must be replaced by the \emph{$S$-spectrum}: writing
$Q_{s}[T] := T^{2} - 2\Real(s)\,T + |s|^{2}I$, one defines
\[
\sigma_{S}(T) := \{s \in \HH : Q_{s}[T] \text{ is not boundedly invertible}\},
\]
the standard object of the slice hyperholomorphic functional
calculus~\cite[Ch.~3]{Alpay2024QHS}, see also~\cite[Def.~2.2]{Mantovani2026}.
The set $\sigma_{S}(T)$ is axially symmetric:
for each $s \in \HH$, the similarity class $[s] := \{a s a^{-1} : a \in
\HH \setminus \{0\}\}$ coincides with the 2-sphere
$\{\Real(s) + J|\mathrm{Im}(s)| : J \in \HH, J^{2} = -1, J_{0} = 0\}$, and
$[s] \subseteq \sigma_{S}(T)$ whenever $s \in \sigma_{S}(T)$. In our
stability analysis, we work exclusively with Hermitian operators
(Lyapunov matrices $P$), for which the $S$-spectrum reduces to the
classical real spectrum of $\Phi(P)$.
\end{remark}

\subsection{GOSL Characterization of the Jacobi Defect}

\begin{assumption}[Carath\'eodory Regularity]\label{ass:continuity}
The map $(x,\xi_{1},\xi_{2},t)\mapsto\mathcal{J}(x,x+\xi_{1},x+\xi_{2},t)$
is jointly Carath\'eodory in $(x,\xi_{1},\xi_{2})$ for every fixed $t$ and
Lebesgue measurable in $t$ for each fixed $(x,\xi_{1},\xi_{2})$. Equivalently,
$(x,\xi_{1},\xi_{2},t)\mapsto\|\Pi_{\mathrm{Im}(B)}[\mathcal{J}(x,x+\xi_{1},x+\xi_{2},t)]\|$
is a normal integrand on $\HH^{n}\times\HH^{n}\times\HH^{n}\times[0,\infty)$
in the sense of~\cite[Chapter~14.D]{RockafellarWets1998}.
\end{assumption}

\begin{remark}[Measurable Selection: Attainment and Joint Measurability]
\label{rem:measurable_selection}
Two points underlying Definition~\ref{def:jacobi_defect} warrant explicit
justification.

\emph{Attainment.} The constraint set $\{\|\xi_{i}\| \leq \epsStar\}$ is a
closed ball in $\HH^{n} \cong \RR^{4n}$, which is compact (Heine--Borel in
finite dimensions). The integrand
$(x,\xi_{1},\xi_{2},t)\mapsto\|\Pi_{\mathrm{Im}(B)}\mathcal{J}(x,x+\xi_{1},x+\xi_{2},t)\|$
is Carath\'eodory by Assumption~\ref{ass:continuity}, hence continuous
in $(\xi_{1},\xi_{2})$ for a.e.\ $t$. Maximization of a continuous function
over a compact set is therefore attained, so $\deltabar(x,t)$ in
equation~\eqref{eq:defect_bound_scalar} is indeed a maximum.

\emph{Joint measurability.} Under Assumption~\ref{ass:continuity}, the
integrand above is a normal integrand in the sense
of~\cite[Chapter~14.D]{RockafellarWets1998}. By the measurable selection
theorems of~\cite[Chapter~14.A]{RockafellarWets1998} applied to argmax
correspondences of normal integrands, the argmax correspondence
$(\xi_{1}^{*}(x,t),\xi_{2}^{*}(x,t))$ admits a Borel-measurable selection
jointly in $(x,t)$.

\emph{State dependence of the selection.} The selection $\xi^{*}(x,t)$
depends on $x$ in general; this state dependence is the technical reason
why a naive trilinearity argument cannot establish the GOSL bound directly.
The proof of Lemma~\ref{lem:osl_projection} addresses this issue by lifting
to a uniform supremum over both arguments simultaneously
(see~\eqref{eq:gosl_decomp}).
\end{remark}

\begin{definition}[Jacobi Defect]\label{def:jacobi_defect}
Fix bracket constants $A>0$, $C_{1}\geq 0$, $C_{2}>0$, and $\varepsilon_{0}>0$
(see Remark~\ref{rem:bracket_constants} for the meaning of $A$ and $C_1$).
With the convention $\sqrt{1/(16\cdot 0)} := +\infty$ (i.e., the constraint is
inactive when $\LL$ is exactly antisymmetric), set:
\begin{equation}\label{eq:admissible_radius}
\epsStar = \min\!\left\{
\tfrac{1}{16A},\;\sqrt{\tfrac{1}{16C_{1}}},\;
\tfrac{1}{4C_{2}},\;\varepsilon_{0}
\right\}.
\end{equation}
Define the worst-case defect over a compact set $\mathcal{X}$ with
$M_{\mathcal{X}}^{\mathrm{prior}}$ an \emph{a priori} bound:
\begin{equation}\label{eq:defect_bound_scalar}
\deltabar(x,t)
= \sup_{\|\xi_{i}\|\leq\epsStar}
\bigl\|\Pi_{\mathrm{Im}(B)}[\mathcal{J}(x,x+\xi_{1},x+\xi_{2},t)]\bigr\|,
\end{equation}
\begin{equation}\label{eq:deltabar_max}
\deltabar_{\max} = 6C_{2}\,M_{\mathcal{X}}^{\mathrm{prior}}(M_{\mathcal{X}}^{\mathrm{prior}}+\epsStar)^{2}.
\end{equation}
By Remark~\ref{rem:measurable_selection}, the supremum is attained and admits
a jointly measurable selection $(\xi_{1}^{*}(x,t),\xi_{2}^{*}(x,t))$ with
$\|\xi_{i}^{*}\|\leq\epsStar$, giving:
\begin{equation}\label{eq:defect_projection_vector}
\delta(x,t)
= \Pi_{\mathrm{Im}(B)}[\mathcal{J}(x,x+\xi_{1}^{*}(x,t),x+\xi_{2}^{*}(x,t),t)],
\quad \|\delta(x,t)\| = \deltabar(x,t) \leq \deltabar_{\max}.
\end{equation}

\noindent\textbf{Consistency condition.}
The \emph{a posteriori} invariant set radius $M_{0}$ is determined by
Algorithm~\ref{alg:init}. Consistency requires $M_{0} \leq M_{\mathcal{X}}^{\mathrm{prior}}$.
This is guaranteed by the following argument: Algorithm~\ref{alg:init}
initializes $M_{\mathcal{X}}^{\mathrm{prior}} \leftarrow \|x(0)\| + \epsilon_0$
(Step~1) and halves $M_{0}$ until the invariance condition is satisfied.
Lemma~\ref{lem:Cinfty_scaling} establishes finite termination
\emph{conditionally} on the sufficient conditions stated in
Remark~\ref{rem:termination_status}.
\end{definition}

\begin{definition}[Generalized OSL (GOSL)]\label{def:GOSL}
$f:\HH^{n}\to\HH^{n}$ satisfies GOSL with constants $(\rho,\ell_{f})$,
$\ell_{f}\geq 0$, on $\mathcal{X}$ if:
\begin{equation}\label{eq:GOSL}
\innerR{f(x)-f(z)}{x-z}\leq\rho\|x-z\|^{2}+\ell_{f}\|x-z\|,\quad
\forall\,x,z\in\mathcal{X}.
\end{equation}
When $\ell_{f}=0$ this reduces to the standard OSL condition.
\end{definition}

\begin{assumption}[Ball Domain]\label{ass:ball}
Throughout Lemma~\ref{lem:osl_projection} and Proposition~\ref{prop:forward_inv},
$\mathcal{X} = \overline{B}_{M_{\mathcal{X}}} := \{x\in\HH^{n}:\|x\|\leq M_{\mathcal{X}}\}$
is a closed ball centered at the origin. Under this assumption the diameter satisfies
$D_{\mathcal{X}} = \sup_{x,z\in\mathcal{X}}\|x-z\| \leq 2M_{\mathcal{X}}$.
\end{assumption}

%
%
%
%

\begin{lemma}[GOSL Bound for the Projected Defect]\label{lem:osl_projection}
Under Definition~\ref{def:quasi_lie}, Assumption~\ref{ass:continuity}, and
Assumption~\ref{ass:ball}, the projected Jacobi defect $\delta$ satisfies a
GOSL bound:
\begin{equation}\label{eq:osl_delta_bound}
\innerR{\delta(x,t)-\delta(z,t)}{x-z}
\leq\rho_{\delta}^{\mathrm{tight}}\|x-z\|^{2}+\ell_{\delta}^{\mathrm{tight}}\|x-z\|,
\end{equation}
with the \emph{tight constants}
\begin{equation}\label{eq:LJ_prime_def}
\rho_{\delta}^{\mathrm{tight}} = 2\rho_{\delta},
\qquad
\ell_{\delta}^{\mathrm{tight}} = 4\deltabar_{\max},
\qquad
\rho_{\delta} := 6C_{2}(M_{\mathcal{X}}+\epsStar)(3M_{\mathcal{X}}+\epsStar).
\end{equation}
The factor 2 in $\rho_{\delta}^{\mathrm{tight}}$ and the factor 4 in
$\ell_{\delta}^{\mathrm{tight}}$ arise from the sub-optimality gap of the
state-dependent measurable selection $\xi^{*}(\cdot,t)$; they collapse to 1
when $\xi^{*}$ admits a state-independent maximizer
(Remark~\ref{rem:gosl_constants}). The quantity $\rho_{\delta}$
above is the \emph{base} (non-tight) trilinear coefficient.
\end{lemma}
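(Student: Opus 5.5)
The plan is to split $\delta(x,t)-\delta(z,t)$ by freezing the selection, so that the state dependence of $\xi^{*}(\cdot,t)$ is isolated in a term that is bounded crudely, with no Lipschitz control needed. Fix $t$ and $x,z\in\mathcal{X}=\overline{B}_{M_{\mathcal{X}}}$. Write $a:=\xi_{1}^{*}(z,t)$ and $b:=\xi_{2}^{*}(z,t)$, so that $\|a\|,\|b\|\leq\epsStar$. Introduce the frozen-selection vector
\[
\tilde\delta := \Pi_{\mathrm{Im}(B)}\bigl[\mathcal{J}(x,x+a,x+b,t)\bigr],
\]
which evaluates at $x$ the maximizer chosen for $z$. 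Then split
\[
\delta(x,t)-\delta(z,t) = \bigl(\delta(x,t)-\tilde\delta\bigr) + \bigl(\tilde\delta-\delta(z,t)\bigr).
\]
The second bracket involves a common selection $(a,b)$, so trilinearity applies. The first bracket carries the whole suboptimality gap of the state-dependent selection.

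\textbf{Trilinear term.} Since $\mathcal{J}(\cdot,\cdot,\cdot,t)$ is $\RR$-trilinear, I will telescope one slot at a time:
\[
\mathcal{J}(x,x+a,x+b,t)-\mathcal{J}(z,z+a,z+b,t)
= \mathcal{J}(x-z,x+a,x+b,t)+\mathcal{J}(z,x-z,x+b,t)+\mathcal{J}(z,z+a,x-z,t).
\]
Each term is then bounded with the Jacobiator bound $6C_{2}\|\cdot\|\|\cdot\|\|\cdot\|$ of Definition~\ref{def:quasi_lie}, using $\|x\|,\|z\|\leq M_{\mathcal{X}}$ (Assumption~\ref{ass:ball}) and $\|x+a\|,\|x+b\|,\|z+a\|\leq M_{\mathcal{X}}+\epsStar$. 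This gives
\[
6C_{2}\|x-z\|\bigl[(M_{\mathcal{X}}+\epsStar)^{2}+2M_{\mathcal{X}}(M_{\mathcal{X}}+\epsStar)\bigr]
=6C_{2}(M_{\mathcal{X}}+\epsStar)(3M_{\mathcal{X}}+\epsStar)\|x-z\|
=\rho_{\delta}\|x-z\|.
\]
Applying $\|\Pi_{\mathrm{Im}(B)}\|_{\mathrm{op}}\leq 1$ (Lemma~\ref{lem:BtB_invertible}) and Cauchy--Schwarz (Lemma~\ref{lem:CS}) then yields $\innerR{\tilde\delta-\delta(z,t)}{x-z}\leq\rho_{\delta}\|x-z\|^{2}$.

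\textbf{Selection-gap term.} No regularity of $\xi^{*}$ in $x$ is available, so I will lift to the uniform supremum over both selection arguments. Both $\delta(x,t)$ and $\tilde\delta$ are values of $\Pi_{\mathrm{Im}(B)}\mathcal{J}(x,x+\xi_{1},x+\xi_{2},t)$ with $\|\xi_{i}\|\leq\epsStar$, so each has norm at most $\deltabar(x,t)\leq\deltabar_{\max}$. This uses $M_{\mathcal{X}}\leq M_{\mathcal{X}}^{\mathrm{prior}}$, i.e.\ the consistency condition of Definition~\ref{def:jacobi_defect}, which I will state explicitly as being in force. Hence $\|\delta(x,t)-\tilde\delta\|\leq 2\deltabar_{\max}$, and Cauchy--Schwarz gives a contribution of at most $2\deltabar_{\max}\|x-z\|$. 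In total,
\[
\innerR{\delta(x,t)-\delta(z,t)}{x-z}\leq\rho_{\delta}\|x-z\|^{2}+2\deltabar_{\max}\|x-z\|.
\]
The symmetric decomposition, which freezes $x$'s selection and evaluates it at $z$, gives the same bound. Summing the two decompositions to treat both selection arguments simultaneously, as in the ``uniform supremum over both selection arguments'' of Remark~\ref{rem:measurable_selection}, produces exactly $(2\rho_{\delta},4\deltabar_{\max})$. In any case these are upper bounds for the one-sided constants above, which proves \eqref{eq:osl_delta_bound}. When $\xi^{*}$ does not depend on $x$, the gap term vanishes and the constants reduce to $\rho_{\delta}$, consistent with the factors collapsing.

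\textbf{Main obstacle.} The hard point is the selection-gap term: $\xi^{*}(\cdot,t)$ is only measurable, so it cannot be differentiated or Lipschitz-estimated. The plan avoids this by never comparing $\xi^{*}(x,t)$ with $\xi^{*}(z,t)$ directly. It absorbs the gap into the affine GOSL coefficient $\ell_{\delta}^{\mathrm{tight}}$ through the a priori bound $\deltabar_{\max}$, which is exactly why the lemma needs GOSL rather than plain OSL. The remaining care is bookkeeping: checking that the ball radii in the telescoping step are $M_{\mathcal{X}}$ for the bare slots and $M_{\mathcal{X}}+\epsStar$ for the shifted ones, so that the product matches the stated $\rho_{\delta}$.
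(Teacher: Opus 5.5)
Your proof is correct. Its outer structure matches the paper's, but you handle the frozen-selection term differently, and your route gives sharper constants.

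\textbf{What is the same.} The paper inserts exactly your intermediate term $\tilde\delta=G_{\xi^{*}(z,t)}(x,t)$ and splits $\delta(x,t)-\delta(z,t)=E+D$. It bounds the selection gap $E$ by $2\deltabar_{\max}$, as you do.

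\textbf{What differs.} For $D=\tilde\delta-\delta(z,t)$, the paper expands $\mathcal{J}_{t}(z+h,z+h+\xi_{1},z+h+\xi_{2})$ about $z$ in powers of $h=x-z$. This gives a linear part bounded by $\rho_{\delta}\|h\|$, plus quadratic and cubic remainders. To avoid an $\|h\|^{3}$ term, the paper absorbs these remainders through the crude estimate $\|D\|\leq 2\deltabar_{\max}$. That yields $\innerR{D}{h}\leq 2\rho_{\delta}\|h\|^{2}+2\deltabar_{\max}\|h\|$. This step, not the selection gap, is the real source of the factor $2$ in $\rho_{\delta}^{\mathrm{tight}}$ and of half of the factor $4$ in $\ell_{\delta}^{\mathrm{tight}}$.

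Your slot-by-slot telescoping keeps every argument at $x$, $z$, or a shifted point $x+\xi_{i}$ or $z+\xi_{i}$. All of these lie within radius $M_{\mathcal{X}}$ or $M_{\mathcal{X}}+\epsStar$. So no higher-order remainder appears, and you get the Lipschitz bound $\|D\|\leq\rho_{\delta}\|h\|$ directly.

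\textbf{What each route buys.} Your route is shorter and strictly sharper:
\begin{itemize}
\item It gives $(\rho_{\delta},2\deltabar_{\max})$, which implies the stated $(2\rho_{\delta},4\deltabar_{\max})$ because all quantities are nonnegative.
\item It shows that the doubling of $\rho_{\delta}$ comes from the expansion, not from the state dependence of $\xi^{*}$.
\item For a state-independent selection it gives a plain OSL bound with $\ell=0$.
\end{itemize}
The paper's route buys only a literal match with the constants as stated.

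\textbf{One sentence to drop.} You say that adding your decomposition to its mirror image ``produces'' $(2\rho_{\delta},4\deltabar_{\max})$. It does not. The sum bounds $2\innerR{\delta(x,t)-\delta(z,t)}{x-z}$ by $2\rho_{\delta}\|h\|^{2}+4\deltabar_{\max}\|h\|$. Dividing by $2$ just returns your own constants; it does not give $(2\rho_{\delta},4\deltabar_{\max})$ as constants for the inner product itself. The monotonicity remark you make right afterwards is the correct justification, and it is enough on its own.
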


\begin{proof}
Let $\Pi_{B}$ denote the orthogonal projector onto $\mathrm{Im}(B)$, satisfying
$\|\Pi_{B}\|_{\mathrm{op}} \leq 1$ (Lemma~\ref{lem:BtB_invertible}).
Set $h = x - z$ and $K = \overline{B}_{\epsStar}^{\,\HH^{n}}$.

\medskip
\textbf{Step 1 (Frozen-selection trick).}
For any pair $\xi = (\xi_{1},\xi_{2}) \in K \times K$, define the
\emph{frozen-selection map}
\begin{equation}\label{eq:Gxi_def}
G_{\xi}(y,t) := \Pi_{B}\,\mathcal{J}_{t}(y,\, y+\xi_{1},\, y+\xi_{2}).
\end{equation}
For each fixed $\xi$, the map $y \mapsto G_{\xi}(y,t)$ is polynomial of degree
at most 3 in $y$ (by $\RR$-trilinearity of $\mathcal{J}_{t}$ in its three
arguments, after expanding each $y+\xi_{i}$). The difference $G_{\xi}(y,t) - G_{\xi}(z,t)$
admits an exact multilinear expansion (Step 4 below). The uniform bound
$\|G_{\xi}(y,t)\| \leq \deltabar_{\max}$ holds uniformly in $\xi \in K\times K$ and
$y \in \mathcal{X}$ (using the consistency $M_{\mathcal{X}} \leq M_{\mathcal{X}}^{\mathrm{prior}}$
of Definition~\ref{def:jacobi_defect}).

By Definition~\ref{def:jacobi_defect},
$\delta(x,t) = G_{\xi^{*}(x,t)}(x,t)$ and $\delta(z,t) = G_{\xi^{*}(z,t)}(z,t)$
where $\xi^{*}(\cdot,t)$ is the (state-dependent) measurable selection.

\medskip
\textbf{Step 2 (Decomposition via the frozen $z$-selection).}
Insert the intermediate term $G_{\xi^{*}(z,t)}(x,t)$:
\begin{equation}\label{eq:gosl_decomp}
\delta(x,t) - \delta(z,t)
= \underbrace{\bigl[G_{\xi^{*}(x,t)}(x,t) - G_{\xi^{*}(z,t)}(x,t)\bigr]}_{=: E(x,z,t)}
+ \underbrace{\bigl[G_{\xi^{*}(z,t)}(x,t) - G_{\xi^{*}(z,t)}(z,t)\bigr]}_{=: D(x,z,t)}.
\end{equation}
The decomposition is exact. The term $D(x,z,t)$ has the \emph{same} frozen
selection $\xi = \xi^{*}(z,t)$ in both summands, so $\RR$-trilinearity applies
directly. The term $E(x,z,t)$ measures the sub-optimality gap arising from the
state dependence of $\xi^{*}$.

\medskip
\textbf{Step 3 (Bound on $E(x,z,t)$).}
By the optimality of $\xi^{*}(x,t)$ and the uniform bound
$\|G_{\xi}(\cdot,t)\| \leq \deltabar_{\max}$:
\begin{equation}\label{eq:E_bound}
\|E(x,z,t)\| \leq \|G_{\xi^{*}(x,t)}(x,t)\| + \|G_{\xi^{*}(z,t)}(x,t)\|
\leq 2\deltabar_{\max}.
\end{equation}
Hence by Cauchy--Schwarz (Lemma~\ref{lem:CS}):
\begin{equation}\label{eq:E_inner}
\innerR{E(x,z,t)}{h} \leq \|E(x,z,t)\|\,\|h\| \leq 2\deltabar_{\max}\,\|h\|.
\end{equation}

\medskip
\textbf{Step 4 (Bound on $D(x,z,t)$ via $\RR$-trilinearity at fixed $\xi$).}
Set $\xi := \xi^{*}(z,t)$ and write $a = z$, $b = z+\xi_{1}$, $c = z+\xi_{2}$.
Using $\RR$-trilinearity of $\mathcal{J}_{t}$ at \emph{fixed} $\xi$:
\begin{align*}
\mathcal{J}_{t}(z+h, z+h+\xi_{1}, z+h+\xi_{2}) - \mathcal{J}_{t}(z, z+\xi_{1}, z+\xi_{2})
&= \Delta_{\mathrm{lin}}^{\xi} + \Delta_{\mathrm{quad}}^{\xi} + \mathcal{J}_{t}(h,h,h),
\end{align*}
where
\begin{align*}
\Delta_{\mathrm{lin}}^{\xi} &= \mathcal{J}_{t}(h,b,c) + \mathcal{J}_{t}(a,h,c) + \mathcal{J}_{t}(a,b,h),\\
\Delta_{\mathrm{quad}}^{\xi} &= \mathcal{J}_{t}(h,h,c) + \mathcal{J}_{t}(h,b,h) + \mathcal{J}_{t}(a,h,h).
\end{align*}
Using the $6C_{2}$ trilinear bound and $\|a\|, \|z\| \leq M_{\mathcal{X}}$,
$\|b\|,\|c\| \leq M_{\mathcal{X}}+\epsStar$:
\begin{align}
\|\Delta_{\mathrm{lin}}^{\xi}\|
&\leq 6C_{2}\bigl[(M_{\mathcal{X}}+\epsStar)^{2} + 2M_{\mathcal{X}}(M_{\mathcal{X}}+\epsStar)\bigr]\|h\|
\nonumber\\
&= 6C_{2}(M_{\mathcal{X}}+\epsStar)(3M_{\mathcal{X}}+\epsStar)\|h\|
= \rho_{\delta}\|h\|. \label{eq:Dlin_bound}
\end{align}
For the quadratic term:
\begin{equation}\label{eq:Dquad_bound}
\|\Delta_{\mathrm{quad}}^{\xi}\|
\leq 6C_{2}\bigl[2(M_{\mathcal{X}}+\epsStar) + M_{\mathcal{X}}\bigr]\|h\|^{2}
\leq 6C_{2}(3M_{\mathcal{X}}+2\epsStar)\|h\|^{2}.
\end{equation}
Finally, $\|\mathcal{J}_{t}(h,h,h)\| \leq 6C_{2}\|h\|^{3}$.

\medskip
\textbf{Step 5 (Inner product bound on $D$).}
A direct Cauchy--Schwarz bound $\innerR{D}{h} \leq \|D\|\,\|h\|$ would
yield a $\|h\|^{3}$ contribution from the cubic part $\mathcal{J}_{t}(h,h,h)$,
which is undesirable for the GOSL bound. We split:
\begin{equation}\label{eq:D_split}
D(x,z,t) = \Pi_{B}(\Delta_{\mathrm{lin}}^{\xi})
+ \Pi_{B}(\Delta_{\mathrm{quad}}^{\xi})
+ \Pi_{B}(\mathcal{J}_{t}(h,h,h)).
\end{equation}

\emph{Linear part.} By Cauchy--Schwarz and~\eqref{eq:Dlin_bound}:
\begin{equation}\label{eq:T1_bound}
\innerR{\Pi_{B}\Delta_{\mathrm{lin}}^{\xi}}{h}
\leq \|\Delta_{\mathrm{lin}}^{\xi}\|\,\|h\| \leq \rho_{\delta}\|h\|^{2}.
\end{equation}

\emph{Quadratic and cubic parts (absorbed via uniform admissibility).}
Let $Q := \Pi_{B}(\Delta_{\mathrm{quad}}^{\xi}) + \Pi_{B}(\mathcal{J}_{t}(h,h,h))$.
From the decomposition~\eqref{eq:D_split} we have
$Q = D(x,z,t) - \Pi_{B}\Delta_{\mathrm{lin}}^{\xi}$. By the triangle inequality:
\begin{equation}\label{eq:Q_triangle}
\|Q\| \leq \|D(x,z,t)\| + \|\Pi_{B}\Delta_{\mathrm{lin}}^{\xi}\|.
\end{equation}
For the first summand, $\|D\| = \|G_{\xi^{*}(z,t)}(x,t) - G_{\xi^{*}(z,t)}(z,t)\|
\leq 2\deltabar_{\max}$ since both evaluations of $G_{\xi^{*}(z,t)}$ are admissible
($\xi^{*}(z,t) \in K\times K$, and $x,z \in \mathcal{X}$, so the
uniform bound $\|G_{\xi}(\cdot,t)\| \leq \deltabar_{\max}$ from Step~1 applies to each).
For the second summand, $\|\Pi_{B}\Delta_{\mathrm{lin}}^{\xi}\| \leq \rho_{\delta}\|h\|$
by~\eqref{eq:Dlin_bound} and $\|\Pi_{B}\|_{\mathrm{op}} \leq 1$. Hence:
\begin{equation}\label{eq:Q_bound}
\|Q\| \leq 2\deltabar_{\max} + \rho_{\delta}\|h\|.
\end{equation}
By Cauchy--Schwarz:
\begin{equation}\label{eq:Q_inner}
\innerR{Q}{h} \leq \|Q\|\,\|h\| \leq 2\deltabar_{\max}\|h\| + \rho_{\delta}\|h\|^{2}.
\end{equation}
Combining~\eqref{eq:T1_bound} and~\eqref{eq:Q_inner}:
\begin{equation}\label{eq:D_inner_bound}
\innerR{D(x,z,t)}{h} \leq 2\rho_{\delta}\|h\|^{2} + 2\deltabar_{\max}\|h\|.
\end{equation}

\medskip
\textbf{Step 6 (Combination).}
From~\eqref{eq:gosl_decomp}, \eqref{eq:E_inner}, and~\eqref{eq:D_inner_bound}:
\begin{align*}
\innerR{\delta(x,t) - \delta(z,t)}{h}
&= \innerR{E(x,z,t)}{h} + \innerR{D(x,z,t)}{h}\\
&\leq 2\deltabar_{\max}\|h\| + 2\rho_{\delta}\|h\|^{2} + 2\deltabar_{\max}\|h\|\\
&= 2\rho_{\delta}\|h\|^{2} + 4\deltabar_{\max}\|h\|.
\end{align*}
This is precisely the GOSL bound~\eqref{eq:osl_delta_bound} with the tight
constants $\rho_{\delta}^{\mathrm{tight}} = 2\rho_{\delta}$ and
$\ell_{\delta}^{\mathrm{tight}} = 4\deltabar_{\max}$ stated
in~\eqref{eq:LJ_prime_def}.
\end{proof}

\begin{remark}[On the constant factors and comparison with classical OSL]
\label{rem:gosl_constants}
The factors $2$ and $4$ in the tight constants arise from the sub-optimality
gap $E(x,z,t)$ (factor 2 for $\rho_{\delta}^{\mathrm{tight}}$ from doubling
the trilinear bound; factor 4 in $\ell_{\delta}^{\mathrm{tight}}$ from the
symmetric bound for both selections $\xi^{*}(x,t)$ and $\xi^{*}(z,t)$). They
cannot be removed without an additional hypothesis on the regularity of the
selection $\xi^{*}(\cdot,t)$ (e.g., Lipschitz continuity in $x$); when the
problem admits a state-independent maximizing selection (generic for
analytic $\mathcal{J}$ on small domains), these factors collapse to $1$.
The affine term $\ell_{\delta}^{\mathrm{tight}}\|x-z\|$ itself is a novel
feature arising from the Jacobi defect structure: in classical OSL
theory~\cite{Abbaszadeh2010} one typically has $\ell_{f} = 0$, whereas here
$\delta(x,t)$ need not vanish at $x = z$ when the Jacobiator is non-zero.
\end{remark}

\begin{remark}[Conservativeness of the bound]\label{rem:gosl_conservative}
The GOSL bound established in Lemma~\ref{lem:osl_projection} is
\emph{rigorous but not necessarily tight}. In particular, the cubic term
$\mathcal{J}(h,h,h)$ arising in the decomposition of $D(x,z,t)$ (Step~5 of
the proof) is absorbed into the global bound $\|D\| \leq 2\deltabar_{\max}$
rather than separated and bounded by $6C_{2}\|h\|^{3}$. This is a
\emph{deliberate choice of estimation strategy}, not an error: separating
the cubic term would yield a sharper inequality of the form
\[
\innerR{\delta(x,t) - \delta(z,t)}{x-z}
\leq \rho_{\delta}^{\mathrm{sharp}}\|h\|^{2}
+ \ell_{\delta}^{\mathrm{sharp}}\|h\|
+ C_{\mathrm{cubic}}\|h\|^{4},
\]
which is no longer a standard GOSL form (it contains a quartic term). The
present formulation absorbs the cubic contribution into the affine
$\ell$-term via $\|h\| \leq M_{\mathcal{X}}$, yielding an effective
upper bound consistent with the standard GOSL inequality and directly
usable in the LMI machinery of Section~\ref{sec:stability}. The price is a
conservative numerical estimate of the GOSL constants; the gain is full
compatibility with classical one-sided Lipschitz theory and the LMI
formulation of $\beta$-exponential stability
(Theorem~\ref{thm:beta_kyb}). A sharper GOSL-with-cubic-term formulation
remains an open direction.
\end{remark}

\begin{proposition}[Explicit Constants for Small Domains]
\label{prop:small_domain_GOSL}
For $M_{\mathcal{X}} \ll \epsStar$, the GOSL constants simplify to:
\[
\rho_{\delta}^{\mathrm{tight}} \approx 12\,C_{2}(\epsStar)^{2}, \qquad
\ell_{\delta}^{\mathrm{tight}} \approx 24\,C_{2}M_{\mathcal{X}}(\epsStar)^{2}.
\]
\end{proposition}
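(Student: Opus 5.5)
The plan is to substitute directly into the closed-form constants of Lemma~\ref{lem:osl_projection} and expand to leading order in the small parameter $\theta := M_{\mathcal{X}}/\epsStar \ll 1$. No new estimate is needed. The content of the proposition is an asymptotic simplification of
\[
\rho_{\delta}^{\mathrm{tight}} = 2\rho_{\delta} = 12C_{2}(M_{\mathcal{X}}+\epsStar)(3M_{\mathcal{X}}+\epsStar),
\qquad
\ell_{\delta}^{\mathrm{tight}} = 4\deltabar_{\max},
\]
with $\deltabar_{\max}$ given by~\eqref{eq:deltabar_max}. I will read ``$\approx$'' as ``equal up to a multiplicative factor $1+O(\theta)$''. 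I will also state the resulting relative error explicitly, so the approximation is quantitative rather than heuristic.

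\textbf{Step 1 ($\rho$-constant).} I factor out $(\epsStar)^{2}$ to get
\[
\rho_{\delta}^{\mathrm{tight}} = 12C_{2}(\epsStar)^{2}(1+\theta)(1+3\theta) = 12C_{2}(\epsStar)^{2}\bigl(1+4\theta+3\theta^{2}\bigr).
\]
This gives the first claim, with relative error at most $4\theta+3\theta^{2}$. The same expansion also yields the one-sided bound $\rho_{\delta}^{\mathrm{tight}}\geq 12C_{2}(\epsStar)^{2}$. This matters because the approximation must not be used as an upper bound in the LMI without the correction factor.

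\textbf{Step 2 ($\ell$-constant).} Here $\deltabar_{\max}$ is written in terms of the \emph{a priori} radius $M_{\mathcal{X}}^{\mathrm{prior}}$, not $M_{\mathcal{X}}$. I will therefore invoke the consistency condition of Definition~\ref{def:jacobi_defect} and identify the two radii. Concretely, I take $\mathcal{X}=\overline{B}_{M_{\mathcal{X}}^{\mathrm{prior}}}$, the natural choice in Assumption~\ref{ass:ball}. Alternatively, one can note that $M_{\mathcal{X}}\leq M_{\mathcal{X}}^{\mathrm{prior}}$ and that both radii are $\ll\epsStar$. With this identification,
\[
\ell_{\delta}^{\mathrm{tight}} = 24C_{2}M_{\mathcal{X}}(M_{\mathcal{X}}+\epsStar)^{2} = 24C_{2}M_{\mathcal{X}}(\epsStar)^{2}(1+\theta)^{2}.
\]
This is the second claim, with relative error $2\theta+\theta^{2}$.

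\textbf{Main obstacle.} The only delicate point is this bookkeeping between $M_{\mathcal{X}}$ and $M_{\mathcal{X}}^{\mathrm{prior}}$. If the two are not identified, the statement must be read with $M_{\mathcal{X}}^{\mathrm{prior}}$ in place of $M_{\mathcal{X}}$ in the $\ell$-formula. I will make this explicit in a remark: the $\rho$-expansion is unaffected, and the $\ell$-expansion simply inherits the a priori radius. Finally, I will note the qualitative consequence of the two expansions. As $M_{\mathcal{X}}\to 0$, the quadratic coefficient tends to the nonzero constant $12C_{2}(\epsStar)^{2}$, while the affine term vanishes linearly in $M_{\mathcal{X}}$.
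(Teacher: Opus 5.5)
Your proposal is correct and matches the paper's proof, which is the same direct substitution $(M_{\mathcal{X}}+\epsStar)\approx\epsStar$, $(3M_{\mathcal{X}}+\epsStar)\approx\epsStar$ into \eqref{eq:LJ_prime_def} and \eqref{eq:deltabar_max}. What you add is the explicit relative errors $4\theta+3\theta^{2}$ and $2\theta+\theta^{2}$, and the explicit statement of the identification $M_{\mathcal{X}}^{\mathrm{prior}}=M_{\mathcal{X}}$, which the paper uses without saying so; note that the inequality $M_{\mathcal{X}}\leq M_{\mathcal{X}}^{\mathrm{prior}}$ alone would not give the $\ell$-formula as stated.
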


\begin{proof}
Substitute $M_{\mathcal{X}} \ll \epsStar$ into~\eqref{eq:LJ_prime_def}:
$(M_\mathcal{X}+\epsStar) \approx \epsStar$ and $(3M_\mathcal{X}+\epsStar) \approx \epsStar$,
so $\rho_{\delta} \approx 6C_{2}{\epsStar}^{2}$ and the tight constant
$\rho_{\delta}^{\mathrm{tight}} = 2\rho_{\delta} \approx 12 C_{2}(\epsStar)^{2}$.
Similarly $\deltabar_{\max} \approx 6C_{2}M_{\mathcal{X}}{\epsStar}^{2}$,
giving $\ell_{\delta}^{\mathrm{tight}} = 4\deltabar_{\max} \approx 24 C_{2}M_{\mathcal{X}}(\epsStar)^{2}$.
\end{proof}

\subsection{Quaternionic Sign Function and Filippov Regularization}

\begin{definition}[Quaternionic Sign Function]\label{def:sgn_quaternion}
For $s\in\HH^{m}$: $\sgnH(s) = s/\|s\|$ if $s \neq 0$, and $0$ if $s=0$.
\end{definition}

\begin{lemma}[Properties of $\sgnH$]\label{lem:sgn_properties}
(1) $\|\sgnH(s)\| = 1$ for $s \neq 0$.
(2) $\innerR{s}{\sgnH(s)} = \|s\|$.
(3) $\sgnH(s)$ uniquely maximizes $\innerR{s}{v}$ over $\{\|v\| \leq 1\}$.
(4) The Filippov regularization of $-\eta\,\sgnH(\cdot)$ at $s=0$ contains
$0$ in its convex hull (Lemma~\ref{lem:filippov_real}).
\end{lemma}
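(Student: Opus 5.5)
The plan is to prove the four properties directly from Definition~\ref{def:sgn_quaternion}, the real inner product of Definition~\ref{def:inner}, and Cauchy--Schwarz (Lemma~\ref{lem:CS}). Item~(4) is deferred to the real-representation lemma it cites.

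For~(1), take $s\neq 0$. Since $\innerR{\cdot}{\cdot}$ is $\RR$-bilinear and $1/\|s\|$ is a positive real scalar, we get $\|s/\|s\|\| = \|s\|/\|s\| = 1$. Real scalars commute with all quaternions, so the choice of module convention causes no ambiguity here. For~(2), bilinearity gives $\innerR{s}{s/\|s\|} = \|s\|^{2}/\|s\| = \|s\|$ when $s\neq 0$. When $s=0$, both sides vanish, because $\sgnH(0)=0$.

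For~(3), first take $s\neq0$. Lemma~\ref{lem:CS} gives $\innerR{s}{v}\le\|s\|\|v\|\le\|s\|$ for every $\|v\|\le 1$, and by~(2) this bound is attained at $v=\sgnH(s)$. For uniqueness, suppose $\innerR{s}{v}=\|s\|$ with $\|v\|\le 1$. Then equality holds throughout, so $\|v\|=1$. Expanding via bilinearity,
\[
\|v-\sgnH(s)\|^{2}=\|v\|^{2}-2\innerR{v}{\sgnH(s)}+1=1-2+1=0 .
\]
This sidesteps any appeal to an equality case of Cauchy--Schwarz in the quaternionic setting. I expect uniqueness to be the only delicate point.

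When $s=0$, every $v$ maximizes, so the uniqueness claim must be read as applying to $s\neq 0$. I would state this explicitly.

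For~(4), pass to $\RR^{4m}$ via the isometry $\Phi$. There $\Phi(\sgnH(s))=\Phi(s)/\|\Phi(s)\|$. The Filippov set-valued map at $0$ is the closed convex hull of the limits $\Phi(s)/\|\Phi(s)\|$ as $s\to0$, which is the closed unit ball and hence contains $0$. The full argument is the content of Lemma~\ref{lem:filippov_real}, which the statement itself cites.
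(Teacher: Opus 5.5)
Your proof is correct. The paper states this lemma without a written proof: items (1)--(3) are treated as immediate from Definition~\ref{def:sgn_quaternion}, the symmetric $\RR$-bilinear inner product and Lemma~\ref{lem:CS}, and item (4) is delegated to Lemma~\ref{lem:filippov_real}. Your argument is exactly that verification. Your caveat on (3) is a genuine correction to the statement as written: at $s=0$ every $v$ with $\|v\|\leq 1$ attains $\innerR{0}{v}=0$, so uniqueness holds only for $s\neq 0$. In (4), keep the factor $-\eta$ explicit, so that the Filippov set is $-\eta\,\overline{B}_{1}^{\RR^{4m}}$ as in Lemma~\ref{lem:filippov_real}.
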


\begin{lemma}[Filippov Regularization]\label{lem:filippov_real}
Under $\Phi : \HH^{m} \to \RR^{4m}$, the Filippov convexification of
$s \mapsto -\eta\,\Phi(\sgnH(s))$ at $\tilde{s} = 0$ is
$-\eta\,\overline{B}_{1}^{\RR^{4m}}$, which contains $\mathbf{0}$.
Hence $\{\tilde{s} = \mathbf{0}\}$ is forward invariant whenever
$\|\tilde{w}\| \leq \eta$ (standard Filippov set-valued
analysis~\cite{Filippov1988}; in particular, the invariance criterion for
sliding manifolds in terms of the Filippov set-valued vector field).
\end{lemma}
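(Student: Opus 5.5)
The plan is to work entirely in the real representation, using $\Phi:\HH^{m}\to\RR^{4m}$ from Remark~\ref{rem:real_rep}. Since $\Phi$ is an $\RR$-linear isometry, it turns $\sgnH$ into the ordinary Euclidean normalization $\tilde s\mapsto \tilde s/\|\tilde s\|$ on $\RR^{4m}$, with value $0$ at the origin. It also identifies $\innerR{\cdot}{\cdot}$ with the Euclidean inner product (see the proof of Lemma~\ref{lem:CS}). So the lemma reduces to a statement about the standard discontinuous unit-vector field on $\RR^{4m}$, and I will treat it with Filippov's construction as in~\cite{Filippov1988}.

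\textbf{Step 1: the Filippov set at the origin.} I compute
\[
F(0)=\bigcap_{\delta>0}\bigcap_{\mu(N)=0}\overline{\mathrm{co}}\,\bigl\{-\eta\,\tilde s/\|\tilde s\| : \tilde s\in B_{\delta}(0)\setminus N\bigr\}.
\]
For each $\delta$, every direction on the unit sphere $S^{4m-1}$ is realized by points of $B_\delta(0)\setminus\{0\}$. Removing a null set $N$ still leaves a dense subset of directions, because a radial cone over a set of directions of positive spherical measure has positive Lebesgue measure. The closed convex hull of a dense subset of $S^{4m-1}$ is $\overline{B}_1$, so $F(0)=-\eta\,\overline{B}_1^{\RR^{4m}}$. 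This set is symmetric about the origin and therefore contains $\mathbf 0$, which gives the first claim and item~(4) of Lemma~\ref{lem:sgn_properties}. Away from the origin the field is continuous, so there $F(\tilde s)=\{-\eta\,\tilde s/\|\tilde s\|\}$.

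\textbf{Step 2: forward invariance.} I read the lemma's dynamics as $\dot{\tilde s}\in F(\tilde s)+\tilde w$, where $\tilde w$ is a measurable disturbance with $\|\tilde w\|\le\eta$. At $\tilde s=0$ I take the selection $v=-\tilde w\in-\eta\overline{B}_1=F(0)$. Then $\dot{\tilde s}=0$ is an admissible Filippov velocity, so $\tilde s\equiv0$ is a Filippov solution and the sliding manifold is invariant in the sense of the standard criterion of~\cite{Filippov1988}. If uniqueness or strong invariance is wanted, I will use $V=\tfrac12\|\tilde s\|^2$. For $\tilde s\neq0$ one has $\dot V=-\eta\|\tilde s\|+\langle\tilde s,\tilde w\rangle\le(-\eta+\|\tilde w\|)\|\tilde s\|\le0$, so $V$ cannot increase along solutions, and solutions starting at $0$ stay at $0$.

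\textbf{Main obstacle.} The only real subtlety is the measure-zero exclusion in Step~1. I have to argue that deleting a null set cannot shrink the convex hull, and the radial-cone argument is what I will use. A second point is that the boundary case $\|\tilde w\|=\eta$ gives only non-strict invariance; the claim is existence of the sliding solution, and strict attractivity is not being asserted.
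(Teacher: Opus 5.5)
Your proposal is correct and takes the route the paper intends. The paper gives no argument of its own beyond citing standard Filippov set-valued analysis and the invariance criterion for sliding manifolds in terms of the Filippov set-valued field. Your Step~1 (with the radial-cone argument for the null-set exclusion) and Step~2 (with the $V=\tfrac12\|\tilde s\|^{2}$ argument showing that every Filippov solution starting at $\mathbf 0$ stays there) simply supply the details that the paper leaves to the citation.
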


\subsection{Summary of Main Results in this Section}

\begin{table}[H]
\centering
\caption{Summary of main operator-theoretic results}
\label{tab:operator_results}
\begin{tabular}{@{}lll@{}}
\toprule
\textbf{Result} & \textbf{Statement} & \textbf{Location} \\
\midrule
Norm transfer & $\|T^{L}\|_{\mathrm{op}} = \|T\|_{\mathrm{op}}$ & Theorem~\ref{thm:norm_transfer} \\
Correction bound & $\|\Omega\|_{\varepsilon} \leq 4C_{2}\varepsilon$ & Corollary~\ref{cor:homotopy_transfer} \\
Cohomological rigidity & $\mathrm{Jac}_{\{\cdot,\cdot\}}\equiv 0$ on $B(0,\epsStar)$ & Theorem~\ref{thm:rigidity} \\
GOSL characterization & $\delta$ satisfies~\eqref{eq:osl_delta_bound} & Lemma~\ref{lem:osl_projection} \\
Real representation & $\|T\|_{\mathrm{op}} = \|\Phi(T)\|_{\mathrm{op}}$ & Proposition~\ref{prop:real_rep_ops} \\
Hermitian spectrum & $\Phi(P)$ symmetric, real eigenvalues & Lemma~\ref{lem:hermitian_spectrum} \\
\bottomrule
\end{tabular}
\end{table}

\section{Problem Formulation}\label{sec:problem}

We now turn from the algebraic side to the control problem. The system,
the standing assumptions, and the regularity conditions on the Jacobi
defect are stated here. The self-consistent initialization
(Algorithm~\ref{alg:init}) and the scaling lemma
(Lemma~\ref{lem:Cinfty_scaling}) that resolve the gain--domain
circularity also belong in this section, even though they are used
mainly in the stability analysis of Section~\ref{sec:stability}.

\subsection{System Dynamics}

\begin{equation}\label{eq:system}
\dot{x}(t) = \LL(x,t)+\delta(x,t)+Bu(t)+Dw(t),
\end{equation}
$x\in\HH^{n}$, $u\in\HH^{m}$, $w\in\HH^{p}$,
$B\in\HH^{n\times m}$, $D\in\HH^{n\times p}$.

\begin{remark}[Notational convention for $\LL(x,t)$]\label{rem:LL_notation}
In~\eqref{eq:system}, $\LL(x,t) \in \HH^{n}$ denotes the
\emph{nominal vector field} of the system, which is derived from the bilinear
bracket $\LL : \HH^{n}\times\HH^{n} \to \HH^{n}$ of
Definition~\ref{def:quasi_lie} by a problem-specific evaluation
(e.g., $\LL(x,t) := \LL(x_{*}(t), x)$ for some prescribed
reference trajectory $x_{*}(\cdot)$, or any $\RR$-linear-in-$x$ contraction
of the bilinear bracket). The linear growth bound $\|\LL(x,t)\| \leq L_{\LL}\|x\|$
of Assumption~\ref{ass:bounded_BG} is consistent with this convention on the
operating domain $\overline{B}_{M_{\mathcal{X}}}$ (with $L_{\LL} \leq A\sup_{t}\|x_{*}(t)\|$
in the prescribed-$x_{*}$ case). For the test system of
Section~\ref{sec:sim}, we adopt the trivial reference $x_{*}(t) \equiv 0$,
yielding $\LL(x,t) = \LL(0,x) = 0$ (the bilinear bracket vanishes when one
argument is zero by $\RR$-bilinearity); the nominal dynamics in the test is
then purely perturbative ($\dot{x} = \delta + Bu + Dw$).
\end{remark}

\subsection{Non-Commutative Residual: Self-Contained Definition}

\begin{remark}[Relationship between $\omega_{\mathrm{op}}$ and $C_{2}$]
\label{rem:omega_vs_C2}
The two constants $\omega_{\mathrm{op}}:=\|\omega_{0}\|_{\varepsilon}$ and
$C_{2}$ play distinct roles:
\begin{itemize}
\item $C_{2}$ is the Jacobiator amplitude constant of
Definition~\ref{def:quasi_lie}: $\|\mathcal{J}(x,y,z)\|\leq 6C_{2}\|x\|\|y\|\|z\|$.
\item $\omega_{\mathrm{op}}$ is the localized operator norm of the
finite-rank component $\omega_{0}$ produced by the rigidity
theorem~\cite[Theorem~4.12, Lemma~4.10]{Athmouni2026}; it satisfies
$\omega_{\mathrm{op}}\leq 6C_{0}C_{2}$, where $C_{0}>0$ depends only on the
geometry of the finite-dimensional space $F$
of~\cite[Def.~3.6]{Athmouni2026}.
\end{itemize}
\textbf{No universal numerical upper bound on $\omega_{\mathrm{op}}$
(of the form $\omega_{\mathrm{op}}\leq c$ for an absolute constant $c$) is
established in~\cite{Athmouni2026} or in this paper.} The only valid bound
is the problem-dependent $\omega_{\mathrm{op}}\leq 6C_{0}C_{2}$. Whenever
the symbol $\omega_{\mathrm{op}}$ appears in numerical estimates below, it
must be interpreted as a problem-dependent quantity, with explicit values
given in Section~\ref{sec:sim} for the worked test bracket.
\end{remark}

\begin{definition}[Non-Commutative Residual]\label{def:Rnc}
Under Assumption~\ref{ass:cohom}, by Theorem~\ref{thm:rigidity}, there
exists a finite-rank component $\omega_{0}\in C^{2}_{\varepsilon}$
(\cite[Theorem~4.12]{Athmouni2026}) such that, setting
$R_{0}(x,y,z):=\mathcal{J}(x,y,z)-d\omega_{0}(x,y,z)$,
\begin{equation}\label{eq:Rnc_def}
\|R_{0}(x,y,z)\| \leq \Cnc\|x\|\|y\|\|z\|
\end{equation}
with $\Cnc$ defined in~\eqref{eq:Cnc_def} (here $\|\omega_{0}\|_{\varepsilon}$
satisfies the problem-dependent bound $\|\omega_{0}\|_{\varepsilon}\leq 6C_{0}C_{2}$;
see also Remark~\ref{rem:Cnc_decomposition}).

Given $B\in\HH^{n\times m}$ with full column rank (Assumption~\ref{ass:rank}),
set $B_{\RR} = \Phi(B) \in \RR^{4n \times 4m}$ (Lemma~\ref{lem:BtB_invertible}).
The real pseudo-inverse is $B_{\RR}^{+} = (B_{\RR}^{T}B_{\RR})^{-1}B_{\RR}^{T}$.
Define the constant
\begin{equation}\label{eq:cBplus_def}
\cBplus := \|B_{\RR}^{+}\|_{\mathrm{op}} = \frac{1}{\sigma_{\min}(B_{\RR})},
\end{equation}
where $\sigma_{\min}$ is the smallest singular value. Define:
\begin{equation}\label{eq:RncB_def}
R_{\mathrm{nc}}^{B}(x,t)
:= \Phi^{-1}\!\left(B_{\RR}^{+}\,\Phi\!\left(R_{0}(x,x+\xi_{1}^{*}(x,t),
x+\xi_{2}^{*}(x,t))\right)\right).
\end{equation}
All results depend only on the bound
$\|G\,R_{\mathrm{nc}}^{B}(x,t)\| \leq c_{C}\,c_{B}\,\cBplus\, R_{\max}$
(Proposition~\ref{prop:residual_bound}), where $G := CB$, independent of the
choice of selection.
\end{definition}

\subsection{Standing Assumptions}

\begin{assumption}[Full Column Rank]\label{ass:rank}
$\mathrm{rank}(B)=m$.
\end{assumption}

\begin{assumption}[Bounded Disturbance]\label{ass:disturbance}
$\|w(t)\|\leq w_{\max}$ for all $t\geq 0$.
\end{assumption}

\begin{assumption}[Cohomological Structure --- Homogeneous Case]
\label{ass:cohom}
The bracket $\LL$ is exactly antisymmetric and its Jacobi defect $\psi$ is
homogeneous of degree~2 in the sense of~\cite[Theorem~4.12]{Athmouni2026}
(the \emph{homogeneous case}). Equivalently,
$\delta(x,t)\in\mathrm{im}_{\mathrm{eval}}(d)$ within the Chevalley--Eilenberg
complex $(C^{*}_{\varepsilon},d)$ of Definition~\ref{def:cochain}. This holds
\textbf{unconditionally for homogeneous quasi-Lie brackets}
(\cite[Theorem~4.12, Lemma~B.1]{Athmouni2026}). All analytical results in this
paper are restricted to this case.
\end{assumption}

\begin{assumption}[Affine Output Map]\label{ass:output}
$\sigma(x)=Cx$ (i.e., $c_0 = 0$), with $C\in\HH^{m\times n}$ full row rank.
\end{assumption}

\begin{assumption}[Alignment]\label{ass:alignment}
$G:=CB\in\HH^{m\times m}$ satisfies
$\Real(x\trans G\trans Gx)>\alpha\|x\|^{2}$ for some $\alpha>0$ and all $x \neq 0$.
\end{assumption}

\begin{assumption}[Bounded Output Operator]\label{ass:bounded_ops}
$\|\Phi(C)\|_{\mathrm{op}}\leq c_{C}$.
\end{assumption}

\begin{assumption}[Positive Definite Gain]\label{ass:gain_K}
$K\in\HH^{m\times m}$ satisfies $\Phi(K)+\Phi(K)^{T}\succ 0$.
\end{assumption}

\begin{assumption}[Hurwitz Sliding Dynamics]\label{ass:hurwitz}
$(\Phi(A_{s})^{T}+\Phi(A_{s}))/2 \prec 0$ with spectral abscissa
$-\alpha_{s}<0$, where $A_{s}:=A_{\mathrm{nom}}(0)$.
\end{assumption}

\begin{assumption}[Bounded Input, Inverse Gain, and Nominal Bracket]
\label{ass:bounded_BG}
Define $c_{B} := \|\Phi(B)\|_{\mathrm{op}}$ and $c_{G} := \|\Phi(G^{-1})\|_{\mathrm{op}}$.
Under Assumption~\ref{ass:alignment}, $\Phi(G)$ is invertible and $c_{G}$ is finite.
The nominal bracket satisfies a linear growth condition: there exists $L_{\LL} \geq 0$
such that
\begin{equation}\label{eq:LL_growth}
\|\LL(x,t)\| \leq L_{\LL}\|x\|
\quad \text{for all } x \in \mathcal{X},\; t \geq 0.
\end{equation}
\end{assumption}

\begin{assumption}[Conservative Regime Conditions]\label{ass:conservative}
The conservative gain regime ($\|\omega_{0}\|_{\varepsilon}\leq 1$) is
\emph{posited as an additional hypothesis}:
\begin{enumerate}
\item[\textup{(a)}] $\epsStar \leq 1/6$,
\item[\textup{(b)}] $M_{0} \leq \epsStar/\sqrt{2}$,
\item[\textup{(c)}] $\|\omega_{0}\|_{\varepsilon} \leq 1$ (independent
hypothesis on the homotopy constant; valid in particular when
$6C_{0}C_{2}\leq 1$, where $C_{0}$ is the geometric constant of
Lemma~\ref{rem:omega_vs_C2}).
\end{enumerate}
Outside the domain defined by~(a) and~(b), or when condition~(c) fails,
the conservative regime cannot be invoked. In all cases, the
problem-dependent bound
$\|\omega_{0}\|_{\varepsilon}\leq 6C_{0}C_{2}$
remains valid (Remark~\ref{rem:omega_vs_C2}), and concrete numerical
values for the test bracket are given in Section~\ref{sec:sim}.
\end{assumption}

\subsection{Cohomological Exactness and Residual Bound}

\begin{remark}[Quaternionic Orthogonal Projector onto $\mathrm{Im}(B)$]
\label{rem:projector}
Under Assumption~\ref{ass:rank} and Lemma~\ref{lem:BtB_invertible}:
\[
\Pi_{\mathrm{Im}(B)}(x) = \Phi^{-1}\left( B_{\RR} (B_{\RR}^T B_{\RR})^{-1} B_{\RR}^T \Phi(x) \right).
\]
When $\delta\in\mathrm{Im}(B)$ (guaranteed by the CMC,
Proposition~\ref{prop:cmc_consequence}), $\Pi_{\mathrm{Im}(B)}\delta = \delta$.
\end{remark}

\begin{proposition}[Non-Commutative Residual Bound]\label{prop:residual_bound}
Under Assumption~\ref{ass:cohom}, for all $x \in \mathcal{X}$, the quantity
$G\,R_{\mathrm{nc}}^{B}(x,t) \in \HH^{m}$ (with $G = CB$) satisfies:
\begin{equation}\label{eq:Rncmax}
\|G\,R_{\mathrm{nc}}^{B}(x,t)\|
\leq c_{C}\,c_{B}\,\cBplus\, R_{\max}(\omega_{\mathrm{op}}),
\quad
R_{\max}(\omega_{\mathrm{op}})
= \Cnc\,M_{\mathcal{X}}(M_{\mathcal{X}}+\epsStar)^{2},
\end{equation}
where $\Cnc$ is the constant of~\eqref{eq:Cnc_def}; either of the equivalent
upper bounds $\Cnc \leq 12C_{2}$ (structural identity, Remark~\ref{rem:Cnc_decomposition})
or $\Cnc \leq 3\omega_{\mathrm{op}} + 6C_{2}$ (triangle-inequality bookkeeping with
$\|d\omega_{0}\|_{\varepsilon}\leq 3\|\omega_{0}\|_{\varepsilon}$ as in the
bookkeeping convention of Remark~\ref{rem:Cnc_decomposition}) may be substituted
in numerical estimates; the smaller of the two should be used.
Here $c_{B} = \|\Phi(B)\|_{\mathrm{op}}$
(Assumption~\ref{ass:bounded_BG}), and $\cBplus$ is defined
in~\eqref{eq:cBplus_def}.
\begin{itemize}
\item \textbf{Generic (problem-dependent) regime:}
$\omega_{\mathrm{op}}\leq 6C_{0}C_{2}$, valid unconditionally
(Remark~\ref{rem:omega_vs_C2}). The bookkeeping form gives
$3\omega_{\mathrm{op}} + 6C_{2} \leq (18C_{0}+6)C_{2}$, with $C_{0}>0$ the geometric constant
of~\cite[Lemma~4.10]{Athmouni2026}; the structural form gives $\Cnc \leq 12C_{2}$
unconditionally.
\item \textbf{Conservative regime:} $\omega_{\mathrm{op}}\leq 1$, valid only
under Assumption~\ref{ass:conservative}; the bookkeeping form gives
$3\omega_{\mathrm{op}} + 6C_{2}\leq 3+6C_{2}$.
\end{itemize}
\end{proposition}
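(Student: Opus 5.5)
The bound follows by unwinding the definition~\eqref{eq:RncB_def} and chaining operator norms through the real representation. Since $G=CB$, the product $G\,R_{\mathrm{nc}}^{B}(x,t)$ equals $C\,B\,R_{\mathrm{nc}}^{B}(x,t)$. Applying $\Phi$ and the homomorphism property (Lemma~\ref{lem:phi_homomorphism}) gives
\[
\Phi\bigl(G\,R_{\mathrm{nc}}^{B}(x,t)\bigr)=\Phi(C)\,B_{\RR}\,B_{\RR}^{+}\,\Phi\bigl(R_{0}(x,x+\xi_{1}^{*},x+\xi_{2}^{*})\bigr).
\]
Here $\Phi\circ\Phi^{-1}$ cancels the outer $\Phi^{-1}$ in~\eqref{eq:RncB_def}. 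By the isometry of $\Phi$ (Proposition~\ref{prop:real_rep_ops}(ii)) and submultiplicativity, the norm is then at most
\[
\|\Phi(C)\|_{\mathrm{op}}\,\|\Phi(B)\|_{\mathrm{op}}\,\|B_{\RR}^{+}\|_{\mathrm{op}}\,\|R_{0}(x,x+\xi_{1}^{*},x+\xi_{2}^{*})\| \le c_{C}\,c_{B}\,\cBplus\,\|R_{0}(\cdots)\|,
\]
using Assumption~\ref{ass:bounded_ops}, Assumption~\ref{ass:bounded_BG} and~\eqref{eq:cBplus_def}.

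One could observe that $B_{\RR}B_{\RR}^{+}=\Pi_{B_{\RR}}$ has norm at most $1$, which would save the factor $c_{B}\cBplus$. I will nevertheless keep the stated, cruder product so the result matches the displayed constant.

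It remains to bound $\|R_{0}\|$. The trilinear estimate~\eqref{eq:Rnc_def}, with $\Cnc$ as in~\eqref{eq:Cnc_def}, gives
\[
\|R_{0}(x,x+\xi_{1}^{*},x+\xi_{2}^{*})\|\le \Cnc\,\|x\|\,\|x+\xi_{1}^{*}\|\,\|x+\xi_{2}^{*}\|.
\]
Since $x\in\mathcal{X}=\overline{B}_{M_{\mathcal{X}}}$ (Assumption~\ref{ass:ball}) and the selection satisfies $\|\xi_{i}^{*}\|\le\epsStar$ (Definition~\ref{def:jacobi_defect}), the triangle inequality yields $\|x+\xi_{i}^{*}\|\le M_{\mathcal{X}}+\epsStar$. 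This gives $R_{\max}=\Cnc M_{\mathcal{X}}(M_{\mathcal{X}}+\epsStar)^{2}$. The bound is uniform in the choice of measurable selection, which justifies the remark in Definition~\ref{def:Rnc}.

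The two regimes then follow by substituting the available bounds on $\Cnc$. In the structural form, Remark~\ref{rem:Cnc_decomposition} gives $\Cnc\le 12C_{2}$ via $d\omega_{0}=\Pi_{\mathrm{cone}}(\psi)$. In the bookkeeping form, $\Cnc\le 3\omega_{\mathrm{op}}+6C_{2}$. Inserting $\omega_{\mathrm{op}}\le 6C_{0}C_{2}$ (Remark~\ref{rem:omega_vs_C2}) gives $(18C_{0}+6)C_{2}$, and inserting $\omega_{\mathrm{op}}\le1$ under Assumption~\ref{ass:conservative}(c) gives $3+6C_{2}$.

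The main obstacle is that the trilinear estimate for $R_{0}$ is only available on the localized domain $B(0,\varepsilon)$ of the cochain norm, while the arguments $x+\xi_{i}^{*}$ may have norm up to $M_{\mathcal{X}}+\epsStar$. I would handle this in one of two ways. The first is to use the pointwise form of the bound, which is global by trilinearity: the $6C_{2}$ Jacobiator bound holds for all arguments, and $d\omega_{0}$ extends multilinearly by homogeneity. The second is to invoke the conservative-regime restriction $M_{0}\le\epsStar/\sqrt{2}$ when localization is essential.
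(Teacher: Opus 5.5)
Your proof is correct and follows the paper's own argument. You bound $\|R_{0}\|$ by $R_{\max}$ using the trilinear estimate together with $\|x+\xi_{i}^{*}\|\leq M_{\mathcal{X}}+\epsStar$, then chain the constants $\cBplus$, $c_{B}$, $c_{C}$ through the isometric homomorphism $\Phi$. Your side remark that $B_{\RR}B_{\RR}^{+}=\Pi_{B_{\RR}}$ is also right, and it would give the sharper bound $c_{C}R_{\max}$.

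Of your two fixes for the localization issue, only the first works: a multilinear map's localized norm does not depend on the radius, by homogeneity. The second cannot work, because $M_{\mathcal{X}}+\epsStar>\epsStar$ however small $M_{0}$ is.
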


\begin{proof}
By Theorem~\ref{thm:rigidity} and Remark~\ref{rem:Cnc_decomposition},
$\|R_{0}(x,y,z)\| \leq \Cnc\|x\|\|y\|\|z\|$ with $\Cnc$ admitting either
of the upper bounds $12C_{2}$ (structural) or $3\omega_{\mathrm{op}}+6C_{2}$
(bookkeeping).
For $x \in \mathcal{X}$, $\|x\|\leq M_{\mathcal{X}}$ and
$\|x+\xi_{i}^{*}\|\leq M_{\mathcal{X}}+\epsStar$, so
$\|R_{0}(x,x+\xi_{1}^{*},x+\xi_{2}^{*})\| \leq \Cnc M_{\mathcal{X}}(M_{\mathcal{X}}+\epsStar)^{2}
= R_{\max}$. By isometry of $\Phi$ and the operator-norm bound on
$B_{\RR}^{+}$:
\[
\|R_{\mathrm{nc}}^{B}\| = \|\Phi^{-1}(B_{\RR}^{+}\Phi(R_{0}))\|
= \|B_{\RR}^{+}\Phi(R_{0})\|_{\RR^{4m}}
\leq \|B_{\RR}^{+}\|_{\mathrm{op}} \|R_{0}\|
= \cBplus\,R_{\max}.
\]
Now $G\,R_{\mathrm{nc}}^{B} = CB\,R_{\mathrm{nc}}^{B} \in \HH^{m}$ is well-defined
($G \in \HH^{m\times m}$, $R_{\mathrm{nc}}^{B} \in \HH^{m}$). By the homomorphism
property $\Phi(CB) = \Phi(C)\Phi(B)$ (Lemma~\ref{lem:phi_homomorphism}) and the
isometry of $\Phi$:
\[
\|G\,R_{\mathrm{nc}}^{B}\| = \|\Phi(G\,R_{\mathrm{nc}}^{B})\|_{\RR^{4m}}
= \|\Phi(C)\,\Phi(B)\,\Phi(R_{\mathrm{nc}}^{B})\|_{\RR^{4m}}
\leq \|\Phi(C)\|_{\mathrm{op}}\,\|\Phi(B)\|_{\mathrm{op}}\,\|R_{\mathrm{nc}}^{B}\|
\leq c_{C}\,c_{B}\,\cBplus\,R_{\max}. \qedhere
\]
\end{proof}

\begin{remark}[Numerical interpretation of $\cBplus$]
\label{rem:cBplus_meaning}
The constant $\cBplus = 1/\sigma_{\min}(\Phi(B))$ measures the
\emph{conditioning} of the actuator matrix $B$. Well-conditioned actuators
have $\cBplus \leq O(1)$. For poorly-conditioned $B$ (e.g.\ near rank
deficiency), $\cBplus$ can be large; in this case, the residual bound
deteriorates and the LMI in Proposition~\ref{prop:beta_stability} becomes
harder to satisfy. The constant $\cBplus$ is a design parameter that must
be controlled at the actuator-selection stage.
\end{remark}

\subsection{Resolving the Initialization Circularity}

\begin{lemma}[Scaling of $C_{\infty}$ and Termination of Algorithm~\ref{alg:init}]
\label{lem:Cinfty_scaling}
Under the standing assumptions and Algorithm~\ref{alg:init} convention
$M_{\mathcal{X}}^{\mathrm{prior}} = M_{\mathcal{X}} = M_{0}$ (so that the
\emph{a posteriori} domain coincides with the \emph{a priori} domain), as
$M_{0} \to 0$:
\begin{align}
R_{\max}(M_{0}) &= \Cnc M_{0}(M_{0}+\epsStar)^{2}
= O(M_{0}) \quad \text{(linear in } M_{0}\text{)}, \label{eq:Rmax_scaling}\\
\deltabar_{\max}(M_{0}) &= 6C_{2} M_{0}(M_{0}+\epsStar)^{2}
= O(M_{0}) \quad \text{(linear in } M_{0}\text{)}. \label{eq:deltabar_scaling}
\end{align}
Hence $C_{\infty}(\mu^{*},M_{0}) = O(M_{0}^{2})$. Specifically, there exists a
constant $\kappa_{\infty} > 0$ depending only on the problem data and the
LMI parameters $(P^{*}, \mu^{*}, \rho_{\delta}^{\mathrm{eff}})$ such that
\begin{equation}\label{eq:Cinfty_scaling}
C_{\infty}(\mu^{*}, M_{0}) \leq \kappa_{\infty}\, M_{0}^{2}.
\end{equation}
The invariance condition $\lambda_{\min}(P^{*}) M_{0}^{2} > C_{\infty}(\mu^{*},M_{0})/\beta^{*}$
is therefore equivalent to the $M_{0}$-independent inequality
\begin{equation}\label{eq:invariance_threshold}
\beta^{*}\,\lambda_{\min}(P^{*}) > \kappa_{\infty}.
\end{equation}
Consequently, halving $M_{0}$ does \emph{not} change the truth value
of~\eqref{eq:invariance_threshold}; instead, the algorithm must verify
this threshold condition at the LMI-solution level. If~\eqref{eq:invariance_threshold}
holds for the current LMI solution, every $M_{0}$ satisfies the invariance condition.
If not, the LMI parameters must be re-solved (Algorithm~\ref{alg:lmi}) with a
tighter $\rho_{\delta}^{\mathrm{eff}}$ via reducing $M_{0}$, exploiting the
fact that $\rho_{\delta}^{\mathrm{eff}}(M_{0}) \to 2\rho_{\delta}(0) = 12 C_{2}(\epsStar)^{2}$
as $M_{0} \to 0$ (using $\rho_{\delta}^{\mathrm{tight}}=2\rho_{\delta}$,
Lemma~\ref{lem:osl_projection}; see also
Proposition~\ref{prop:small_domain_GOSL} for the small-domain
simplification). The halving loop is \emph{conjectured to terminate} after
finitely many iterations because the reduced $\rho_{\delta}^{\mathrm{eff}}$
relaxes the LMI~\eqref{eq:lmi_augmented}, plausibly yielding a feasible
solution with $\kappa_{\infty} < \beta^{*}\lambda_{\min}(P^{*})$; a rigorous
termination guarantee requires the sufficient conditions stated in
Remark~\ref{rem:termination_status}.
\end{lemma}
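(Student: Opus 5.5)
The plan is to make the dependence of $C_{\infty}(\mu^{*},M_{0})$ on $M_{0}$ explicit. I treat $C_{\infty}$ as the constant produced by Theorem~\ref{thm:beta_kyb}: the ``affine'' remainder in the dissipation inequality $\dot V\leq-\beta^{*}V+C_{\infty}$ for $V=x\trans P^{*}x$. I am assuming it arises from Young's inequality applied to the affine GOSL term $\ell_{\delta}^{\mathrm{tight}}\|x-z\|$ of Lemma~\ref{lem:osl_projection} and to the residual contribution of Proposition~\ref{prop:residual_bound}. Concretely, I expect $C_{\infty}$ to be a finite sum of squares of these affine constants. Each square is weighted by a coefficient depending only on $\lambda_{\max}(P^{*})$, $\lambda_{\min}(P^{*})$, $\mu^{*}$, $\mu_{\mathrm{Y}}$ and $\rho_{\delta}^{\mathrm{eff}}$:
\[
C_{\infty}\leq a_{1}\bigl(\ell_{\delta}^{\mathrm{tight}}\bigr)^{2}+a_{2}\bigl(c_{C}c_{B}\cBplus R_{\max}\bigr)^{2}.
\]
Any cross terms are bounded by $2xy\leq x^{2}+y^{2}$. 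The first step is therefore to read off this form and fix the weights $a_{i}$, which do not depend on $M_{0}$ once $(P^{*},\mu^{*},\rho_{\delta}^{\mathrm{eff}})$ is fixed.

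The second step is the scalings~\eqref{eq:Rmax_scaling}--\eqref{eq:deltabar_scaling}. Under the convention $M_{\mathcal{X}}^{\mathrm{prior}}=M_{\mathcal{X}}=M_{0}$, they follow by direct substitution into~\eqref{eq:Rncmax} and~\eqref{eq:deltabar_max}. Both are of the form $c\,M_{0}(M_{0}+\epsStar)^{2}$. Restricting to $M_{0}\leq\bar M$ for a fixed initial $\bar M$, this gives $R_{\max}\leq\Cnc(\bar M+\epsStar)^{2}M_{0}$, and similarly for $\deltabar_{\max}$. Hence $\ell_{\delta}^{\mathrm{tight}}=4\deltabar_{\max}$ and $R_{\max}$ are both $\leq c\,M_{0}$ with explicit $c$. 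Squaring and inserting into the form above yields~\eqref{eq:Cinfty_scaling}, with
\[
\kappa_{\infty}:=\bigl(16a_{1}(6C_{2})^{2}+a_{2}(c_{C}c_{B}\cBplus\Cnc)^{2}\bigr)(\bar M+\epsStar)^{4}.
\]

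The equivalence with~\eqref{eq:invariance_threshold} is then a division by $M_{0}^{2}>0$. Strictly speaking, this gives a sufficient condition, because~\eqref{eq:Cinfty_scaling} is only an upper bound. I would state it as ``implied by'' and note that it becomes an equivalence if $C_{\infty}$ is defined through the bound $\kappa_{\infty}M_{0}^{2}$ itself. The $M_{0}$-independence of~\eqref{eq:invariance_threshold} immediately shows that halving $M_{0}$ alone cannot change its truth value while $(P^{*},\mu^{*})$ is frozen. The limit $\rho_{\delta}^{\mathrm{eff}}(M_{0})\to12C_{2}(\epsStar)^{2}$ follows from~\eqref{eq:LJ_prime_def}, by continuity of $\rho_{\delta}$ in $M_{\mathcal{X}}$, as in Proposition~\ref{prop:small_domain_GOSL}.

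The main obstacle is that $\kappa_{\infty}$ depends on the LMI solution, which changes when $\rho_{\delta}^{\mathrm{eff}}$ is re-solved. Termination therefore needs $\beta^{*}\lambda_{\min}(P^{*})-\kappa_{\infty}$ to become positive along the sequence of re-solved LMIs. I will not prove this unconditionally. I would argue it only under the continuity hypotheses of Remark~\ref{rem:termination_status}: if the solution map $\rho\mapsto(P^{*},\mu^{*},\beta^{*})$ is continuous at $\rho=12C_{2}(\epsStar)^{2}$ and the threshold holds strictly there, then it holds for all sufficiently small $M_{0}$, so finitely many halvings suffice. Without that hypothesis I would leave the termination claim as the stated conjecture.

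One further point requires care: the disturbance term $Dw$ would contribute an $M_{0}$-independent part to $C_{\infty}$. I would check that Theorem~\ref{thm:beta_kyb} places it outside $C_{\infty}$, for instance absorbed by the sliding-mode gain $\eta$. Otherwise the $O(M_{0}^{2})$ scaling holds only when $w_{\max}=0$.
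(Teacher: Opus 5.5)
Your plan is the paper's proof: \eqref{eq:Cinfty_def} is exactly your two-term form with $a_{1}=\mu_{*}/\mu$ and $a_{2}=\mu_{*}^{2}/(\rho_{\delta}^{\mathrm{eff}}-\mu)$, and the paper likewise substitutes the linear scalings of $R_{\max}$ and $\ell_{\delta}^{\mathrm{tight}}=4\deltabar_{\max}$, divides by $M_{0}^{2}$, and argues termination only conditionally, assuming continuity of the LMI solution map at $M_{0}=0$ and strict feasibility of the limit LMI. Your version is slightly tighter in three places: your uniform bound over $M_{0}\leq\bar M$ gives a genuine constant, whereas the paper's $\kappa_{\infty}$ carries a ``$+O(M_{0})$'' tail, so its ``equivalence'' is really your one-sided implication; your explicit assumption that the threshold holds strictly at the limit supplies a step the paper infers from LMI feasibility alone; and your $Dw$ worry does not arise, because the paper's $C_{\infty}$ uses the reduced-dynamics disturbance level $c_{C}c_{B}\cBplus R_{\max}$ from \eqref{eq:reduced_dyn}.
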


\begin{proof}
\textbf{Scaling computations.}
For $M_{0} \to 0$ at $\epsStar$ fixed: $(M_{0}+\epsStar)^{2} \to (\epsStar)^{2}$
(constant), so $R_{\max}(M_{0}) \sim \Cnc M_{0}(\epsStar)^{2}$ and similarly
for $\deltabar_{\max}$, hence $\ell_{\delta}^{\mathrm{tight}} = 4\deltabar_{\max}
\sim 24 C_{2} M_{0}(\epsStar)^{2}$. All linear in $M_{0}$.
The formula~\eqref{eq:Cinfty_def} (with $\mu_{*} = \mu_{\mathrm{LMI}}$ and
$\mu$ the Young parameter) gives
\[
C_{\infty}(\mu, M_{0})
= \frac{\mu_{*}^{2}(c_{C}\,c_{B}\,\cBplus R_{\max})^{2}}{\rho_{\delta}^{\mathrm{eff}}-\mu}
+ \frac{\mu_{*}(\ell_{\delta}^{\mathrm{tight}})^{2}}{\mu}
= O(M_{0}^{2}).
\]
The constant $\kappa_{\infty}$ in~\eqref{eq:Cinfty_scaling} is computable
explicitly:
\begin{equation}\label{eq:kappa_infty_def}
\kappa_{\infty} = \frac{\mu_{*}^{2}(c_{C}\,c_{B}\,\cBplus \Cnc (\epsStar)^{2})^{2}}{\rho_{\delta}^{\mathrm{eff}}-\mu}
+ \frac{\mu_{*}(24 C_{2}(\epsStar)^{2})^{2}}{\mu}
+ O(M_{0}).
\end{equation}

\textbf{Termination (conditional argument).}
Both sides of $\lambda_{\min}(P^{*})M_{0}^{2} > \kappa_{\infty}M_{0}^{2}/\beta^{*}$
scale identically in $M_{0}$, so the inequality is equivalent to
$\beta^{*}\lambda_{\min}(P^{*}) > \kappa_{\infty}$, which is independent of $M_{0}$.
\textbf{This means the halving loop alone cannot establish the invariance
condition by reducing $M_{0}$.} However, halving \emph{also} reduces
$\rho_{\delta}(M_{0}) = 6C_{2}(M_{0}+\epsStar)(3M_{0}+\epsStar) \to 6C_{2}(\epsStar)^{2}$
as $M_{0} \to 0$, which relaxes the LMI constraint and allows
$\rho_{\delta}^{\mathrm{eff}} - \mu^{*}$ to be made larger. This in turn reduces
$\kappa_{\infty}$. \emph{Under} conditions~\textup{(a)} and~\textup{(b)} stated
in Remark~\ref{rem:termination_status} below (continuity of the LMI map and
strict feasibility at the limit), one expects that after finitely many
halvings, $\kappa_{\infty}$ falls below $\beta^{*}\lambda_{\min}(P^{*})$;
this yields a \emph{conditional} termination argument whose unconditional
form is left as an open problem.

A more rigorous \emph{conditional} formulation uses the modified halving rule:
\emph{halve $M_{0}$ and re-solve Algorithm~\ref{alg:lmi}}. The LMI parameters
$(P^{*}(M_{0}), \beta^{*}(M_{0}))$ depend on $M_{0}$ via $\rho_{\delta}^{\mathrm{eff}}(M_{0})$,
and as $M_{0} \to 0$, $\rho_{\delta}^{\mathrm{eff}} \to 2\rho_{\delta}(0) = 12C_{2}(\epsStar)^{2}$
(a fixed positive constant). \emph{If} (i) this fixed limit yields a strictly
feasible LMI (Assumption~\ref{ass:lmi_feasibility}, checked in Step~3 of
Algorithm~\ref{alg:init}) and (ii) the LMI solution map
$M_{0} \mapsto (P^{*}(M_{0}), \beta^{*}(M_{0}))$ is continuous at $M_{0} = 0$,
\emph{then} the limit LMI solution yields
$\beta^{*}_{\infty}\lambda_{\min}(P^{*}_{\infty}) > \kappa_{\infty,\infty}$,
and termination after finitely many halvings follows. Both~\textup{(i)}
and~\textup{(ii)} are \emph{sufficient conditions}, classically expected but
not formally established in this paper; we treat them as standing conjectures
of the algorithmic framework (see Remark~\ref{rem:termination_status}).
\end{proof}

\begin{remark}[Status of the termination argument]\label{rem:termination_status}
\emph{The termination of Algorithm~\ref{alg:init} is established only
conditionally in this paper, under sufficient conditions that are
classically expected but not formally proved here.} Specifically, the
argument of Lemma~\ref{lem:Cinfty_scaling} rests on the following two
\emph{sufficient conditions}, each of which we treat as a standing
conjecture of the algorithmic framework:
\begin{enumerate}
\item[\textup{(a)}] \textbf{Continuity of the LMI solution map
(conjectured).} The map $M_{0} \mapsto (P^{*}(M_{0}), \beta^{*}(M_{0}))$
is conjectured to be locally Lipschitz continuous on the open set of strict
feasibility. This is classically expected (one can argue heuristically via
the implicit function theorem applied to the KKT conditions of the
LMI~\eqref{eq:lmi_augmented}; see Remark~\ref{rem:normal_As_scope}\textup{(i)}
for the framework), but \emph{a formal proof is not provided in this paper}.
The local Lipschitz constant, if it exists, can in principle be bounded
explicitly via the protocol of Remark~\ref{rem:normal_As_scope}\textup{(ii)}.
\item[\textup{(b)}] \textbf{Strict feasibility at the limit (assumed).} The
limit LMI (with $\rho_{\delta}^{\mathrm{eff}} = 12 C_{2}(\epsStar)^{2}$) is
\emph{assumed} to be strictly feasible (Assumption~\ref{ass:lmi_feasibility}).
This is verified analytically for normal $A_{s}$
(Lemma~\ref{lem:normal_As_monotonicity}) and must be verified numerically
otherwise.
\end{enumerate}
\smallskip
\noindent\emph{Acknowledged limitation.} Under \textup{(a)} and~\textup{(b)},
termination of the halving loop is \emph{plausible} but not formally
guaranteed here. In particular:
\begin{itemize}
\item No quantitative \emph{a priori} bound on the number of halvings is
provided (this would require explicit Lipschitz constants for the LMI map,
which depend on the specific $A_{s}$ structure).
\item The continuity~\textup{(a)} is a \emph{sufficient condition},
conjectured but not proved here. A full proof would
require a careful application of the implicit function theorem to the
parametric SDP defining the LMI solution, accounting for active-set
transitions; this is beyond our scope.
\item In practice, termination is observed empirically within a handful of
halvings (the test system of Section~\ref{sec:sim} terminates after one
iteration), but this empirical evidence does not constitute a proof.
\end{itemize}
A fully unconditional termination proof, together with a quantitative
bound on the number of halvings, is an open problem listed in the future
directions of Section~\ref{sec:conclusion}.
\end{remark}

\begin{remark}[On the previous scaling claim]\label{rem:scaling_corrected}
An earlier formulation incorrectly stated $R_{\max} = O(M_{0}^{3})$ and
$C_{\infty} = O(M_{0}^{6})$. The correct scaling is given
by~\eqref{eq:Rmax_scaling}--\eqref{eq:Cinfty_scaling}:
both $R_{\max}$ and $\deltabar_{\max}$ scale linearly in $M_{0}$ (at $\epsStar$
fixed), and $C_{\infty}$ scales quadratically. The cubic scaling
$O(M_{0}^{3})$ would only hold if $\epsStar \propto M_{0}$, which is not the
operating regime of Algorithm~\ref{alg:init}. The termination argument is
correspondingly more subtle: it relies on the relaxation of $\rho_{\delta}^{\mathrm{eff}}$
as $M_{0}$ decreases, not on an asymptotic dominance of $C_{\infty}$ by $M_{0}^{2}$.
\end{remark}

\begin{algorithm}[H]
\caption{Self-Consistent Initialization}
\label{alg:init}
\begin{algorithmic}[1]
\REQUIRE $x(0)$, $s(0)$, $\epsStar$, $C_{2}$, $\alpha_{s}$, $c_{C}$, $\cBplus$, $c_{B}$,
$c_{G}$, $L_{\LL}$, $\eta_{0}>0$, $\epsilon_{0}>0$.
\STATE \textbf{[Fix $M_{0}$]} Set $M_{0}\leftarrow\|x(0)\|+\epsilon_{0}$.
\IF{$M_{0}>\epsStar/\sqrt{2}$}
\STATE \textbf{Stop}: initial state too large for the nominal regime.
\ENDIF
\STATE \textbf{[Check initial LMI feasibility at $P=I$]}
Compute $\rho_{\delta}(M_{0})$ via~\eqref{eq:rho_delta_exact}, then
$\rho_{\delta}^{\mathrm{eff}}(M_{0}) := 2\rho_{\delta}(M_{0}) + 2L_{r}M_{\mathcal{X}}$
(Remark~\ref{rem:residual_absorbed}).
Verify $\beta_{\mathrm{init}} + \rho_{\delta}^{\mathrm{eff}}\mu_{0} < 2\alpha_s$ for some
$\beta_{\mathrm{init}} > 0$ and $\mu_{0}=1+\|\Phi(A_{s})^{T}\Phi(A_{s})\|^{1/2}$
(Remark~\ref{rem:lmi_feasibility_corrected}).
\IF{this condition fails}
\STATE Halve $M_{0}$ and repeat from Step~2.
\COMMENT{$\rho_\delta^{\mathrm{eff}}$ decreases as $M_{0} \to 0$; termination
conditional on Remark~\ref{rem:termination_status}, established
by Lemma~\ref{lem:Cinfty_scaling}.}
\ENDIF
\STATE \textbf{[Compute $\eta$ from $M_{0}$]} Compute $\deltabar_{\max}(M_{0})$,
$\rho_{\delta}(M_{0})$~\eqref{eq:rho_delta_exact},
$R_{\max}(M_{0})$~\eqref{eq:Rmax_scaling} with the problem-dependent
$\omega_{\mathrm{op}}\leq 6C_{0}C_{2}$ (Remark~\ref{rem:omega_vs_C2}).
\STATE Set $\eta \leftarrow c_{C}\,c_{B}\,\cBplus\,R_{\max}(M_{0}) + \|K\|_{\mathrm{op}}c_{C}M_{0} + \eta_{0}$.
\STATE \textbf{[Check peaking]} Compute $\Lambda(M_{0})$~\eqref{eq:Lambda_def},
$T^{*}_{\max} := \|s(0)\|/\eta_{0}$. Verify $T^{*}_{\max}\cdot\Lambda(M_{0}) < \epsilon_{0}$.
\label{alg:peaking}
\IF{Step~\ref{alg:peaking} fails}
\STATE \textbf{Stop}: reduce $\|s(0)\|$ or increase $\eta_{0}$.
\ENDIF
\STATE Solve LMI~\eqref{eq:lmi_augmented} via Algorithm~\ref{alg:lmi}
under Assumption~\ref{ass:lmi_feasibility}.
\STATE Compute $C_{\infty}(\mu^{*},M_{0})$~\eqref{eq:Cinfty_def}.
\STATE \textbf{[Check invariance]}
$\beta^{*}\lambda_{\min}(P^{*}) > \kappa_{\infty}$ (by Lemma~\ref{lem:Cinfty_scaling},
this is the $M_{0}$-free form of $\lambda_{\min}(P^{*})M_{0}^{2} > C_{\infty}/\beta^{*}$).
\label{alg:invariance}
\IF{Step~\ref{alg:invariance} fails \OR $\sqrt{2}\,M_{0} > \epsStar$}
\STATE Halve $M_{0}$; recompute $\rho_{\delta}^{\mathrm{eff}}$ and re-solve LMI;
recompute $\eta$; go to Step~\ref{alg:peaking}.
\ELSE
\STATE Declare $\mathcal{X}=\overline{B}_{M_{0}}$ and \textbf{Return} $(M_{0}, \eta, P^{*}, \beta^{*})$.
\ENDIF
\end{algorithmic}
\end{algorithm}

\begin{remark}[Resolution of the Circularity between $\eta$ and $M_{0}$]
\label{rem:circularity_resolved}
The circularity is resolved by the explicit ordering in Algorithm~\ref{alg:init}:
$M_{0}$ is fixed from the initial data (Step~1), then $\eta$ is computed from $M_{0}$
(Step~7). There is therefore no circular dependence: $M_{0}$ is fixed before $\eta$
is computed, and the halving loop updates $M_{0}$ and immediately recomputes $\eta$.
The consistency condition $M_{0} \leq M_\mathcal{X}^{\mathrm{prior}}$ is maintained
throughout, as $M_\mathcal{X}^{\mathrm{prior}}$ is initialized at $\|x(0)\|+\epsilon_0$
in Step~1 and the halving loop can only decrease $M_{0}$.
\end{remark}

\begin{remark}[Peaking Condition]\label{rem:s0_bound}
The peaking condition requires:
\begin{equation}\label{eq:s0_explicit}
\|s(0)\| < \frac{\eta_{0}\,\epsilon_{0}}{\Lambda(M_{0})}.
\end{equation}
Initializing the integral term in~\eqref{eq:sliding_var} at $x(0)$ gives
$s(0)=0$ and satisfies~\eqref{eq:s0_explicit} trivially.
\end{remark}

\begin{definition}[Reduced State on the Sliding Surface]\label{def:reduced_state}
Under Assumptions~\ref{ass:rank} and~\ref{ass:alignment}, define the
\emph{reduced state} $x_{r}(t) \in \HH^{n}$ as the projection of $x(t)$ onto
the kernel of $C$ along $\mathrm{Im}(B)$:
\begin{equation}\label{eq:xr_def}
x_{r}(t) := x(t) - B\,G^{-1}\,\sigma(x(t))
\quad\Longleftrightarrow\quad
x(t) = x_{r}(t) + B\,G^{-1}\,Cx(t).
\end{equation}
By construction, $C\,x_{r}(t) = C x(t) - C B\,G^{-1}\,Cx(t) = Cx(t) - Cx(t) = 0$,
so $x_{r}(t) \in \ker(C)$ for all $t$. On the sliding surface $\{s=0\}$,
$\sigma(x(t)) = \sigma(x(0)) + \int_{0}^{t}C(\LL(x,\tau)+Dw(\tau))\dd\tau$
(by~\eqref{eq:sliding_var}). When $s = 0$ and $\sigma(x(0)) = 0$
(integral SMC with $s(0)=0$), $\sigma(x(t))$ evolves continuously and
$x_{r}(T^{*}) = x(T^{*})$ if $\sigma(x(T^{*}))=0$; this holds when the
chosen integral sliding surface forces $\sigma(x(t))$ to equal its
free-motion integral at $T^{*}$ (a property of the integral SMC design).
The reduced dynamics on $\{s = 0\}$, after applying the equivalent control
$u_{\mathrm{eq}}$ obtained by solving $\dot{s} = 0$, are given
by~\eqref{eq:reduced_dyn}, with $w_{r}(t) \in \HH^{n}$ the back-projection
of the sliding residual $\tilde{w} \in \HH^{m}$ to the state space:
$w_{r}(t) := B\,G^{-1}\tilde{w}(t) \in \mathrm{Im}(B) \subset \HH^{n}$.
Consequently
\[
\|w_{r}\| \leq \|\Phi(B)\|_{\mathrm{op}}\|\Phi(G^{-1})\|_{\mathrm{op}}\|\tilde{w}\|
\leq c_{B}\,c_{G}\cdot c_{C}\,c_{B}\,\cBplus\,R_{\max}
= c_{B}^{2}\,c_{G}\,c_{C}\,\cBplus\,R_{\max}.
\]
For the test system of Section~\ref{sec:sim} ($B = 1$, $G = 1$),
$c_{B} = c_{G} = 1$ so this bound coincides with the simplified form
$\|w_{r}\| \leq c_{C}\,c_{B}\,\cBplus\,R_{\max}$ used in~\eqref{eq:reduced_dyn}
for notational uniformity with the sliding-variable bound on $\tilde{w}$. For
the general case, the full bound $c_{B}^{2}\,c_{G}\,c_{C}\,\cBplus\,R_{\max}$
should be substituted, modifying $w_{\max}$ in $\Gamma(\mu)$ accordingly
(Remark~\ref{rem:app_reduced}).
\end{definition}

\begin{lemma}[Reduced State Bound at the Switching Instant]\label{lem:xr_at_Tstar}
Assume that the control law of Section~\ref{sec:design} drives $s(t) \to 0$
in finite time $T^{*}$ with $\|s(0)\|$ satisfying~\eqref{eq:s0_explicit},
and that the integral sliding variable is initialized so that $s(0) = 0$,
equivalently $\sigma(x(0)) = 0$ via choice of the integral constant
in~\eqref{eq:sliding_var}. Then at the switching instant
$T^{*} \leq \|s(0)\|/\eta_{0}$: $\|x_{r}(T^{*})\| \leq M_{0}$.
\end{lemma}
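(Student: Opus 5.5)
The plan is to control $x_{r}$ at $T^{*}$ through the decomposition~\eqref{eq:xr_def}, $x_{r}(T^{*}) = x(T^{*}) - B\,G^{-1}\sigma(x(T^{*}))$, and to show two facts. First, the output component vanishes at $T^{*}$. Second, the state itself has not left $\overline{B}_{M_{0}}$ during the reaching phase.

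For the first fact, I would use the initialization hypothesis $s(0)=0$, equivalently $\sigma(x(0))=0$. Since $s(T^{*})=0$ and the integral term of~\eqref{eq:sliding_var} is designed so that $s=0$ forces $\sigma(x(t))$ to coincide with its free-motion integral, the discussion in Definition~\ref{def:reduced_state} gives $\sigma(x(T^{*}))=0$. Hence $x_{r}(T^{*})=x(T^{*})$. If this identification turns out to hold only up to a residual, I would instead bound $\|B\,G^{-1}\sigma\|\le c_{B}c_{G}c_{C}\|x(T^{*})\|$ and absorb that term into the margin $\epsilon_{0}$ below.

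For the second fact, I would bound the growth of $\|x(t)\|$ on $[0,T^{*}]$. Writing $x(t)=x(0)+\int_{0}^{t}\dot x$, the right-hand side of~\eqref{eq:system} under the control law of Section~\ref{sec:design} is bounded in norm by a constant $\Lambda(M_{0})$, as long as $x$ stays in $\overline{B}_{M_{0}}$. This constant collects $L_{\LL}M_{0}$, $\deltabar_{\max}(M_{0})$, $\|D\|w_{\max}$, and $c_{B}$ times the control magnitude, which involves $\eta$, $\|K\|_{\mathrm{op}}c_{C}M_{0}$ and the feedforward. This gives $\|x(t)\|\le\|x(0)\|+t\,\Lambda(M_{0})$.

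The peaking check of Algorithm~\ref{alg:init} (Step~\ref{alg:peaking}), i.e.\ $T^{*}_{\max}\Lambda(M_{0})<\epsilon_{0}$ with $T^{*}\le T^{*}_{\max}=\|s(0)\|/\eta_{0}$, then yields $\|x(t)\|<\|x(0)\|+\epsilon_{0}=M_{0}$. This is in fact trivial when $s(0)=0$, since then $T^{*}=0$.

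The main obstacle is the circularity in the second step: $\Lambda(M_{0})$ bounds $\|\dot x\|$ only while $x\in\overline{B}_{M_{0}}$. I would close this with a continuation (first-exit-time) argument. Let $\tau:=\sup\{t\le T^{*}:\|x\|\le M_{0}\text{ on }[0,t]\}$. On $[0,\tau]$ the estimate gives the strict bound $\|x(\tau)\|<M_{0}$, so continuity of the Filippov solution forces $\tau=T^{*}$.

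In the degenerate case $s(0)=0$ actually assumed in the statement, the argument collapses: $T^{*}=0$, $x_{r}(0)=x(0)-BG^{-1}\sigma(x(0))=x(0)$, and $\|x(0)\|<\|x(0)\|+\epsilon_{0}=M_{0}$. I would present this short case first and the general continuation argument as the robust version.
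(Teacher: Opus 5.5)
Your short argument is correct, and it is all the statement needs. The hypothesis $s(0)=0$ gives $T^{*}\le\|s(0)\|/\eta_{0}=0$. Then $x_{r}(T^{*})=x(0)-B\,G^{-1}\sigma(x(0))=x(0)$ by the hypothesis $\sigma(x(0))=0$, and $\|x(0)\|<\|x(0)\|+\epsilon_{0}=M_{0}$. With \eqref{eq:sliding_var} as written, $s(0)=0$ in fact holds identically. The step $x_{r}(0)=x(0)$ therefore genuinely uses $\sigma(x(0))=Cx(0)=0$, which you correctly take from the hypothesis. The last inequality uses $M_{0}$ from Step~1 of Algorithm~\ref{alg:init} before any halving; the paper relies on the same premise $\|x(0)\|\le M_{0}-\epsilon_{0}$ through Phase~1 of Proposition~\ref{prop:forward_inv}.

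The paper never observes that $T^{*}=0$. It runs a general reaching-phase argument instead:
\begin{itemize}
\item from $s(T^{*})=0$ and \eqref{eq:sliding_var} it derives $\sigma(x(T^{*}))=\int_{0}^{T^{*}}C(\LL+Dw)\,\dd\tau$;
\item it bounds the offset by $\|x_{r}(T^{*})-x(T^{*})\|\le c_{B}c_{G}c_{C}T^{*}_{\max}\Lambda(M_{0})$;
\item it controls $\|x(T^{*})\|$ by Phase~1.
\end{itemize}
Your route is shorter and fully rigorous under the stated hypotheses. The paper's route is aimed at the case $T^{*}>0$, which the statement excludes.

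Your ``robust version'' for $T^{*}>0$ has a genuine gap, however. The first fact is false in general. From $s(T^{*})=0$ and \eqref{eq:sliding_var} you get $\sigma(x(T^{*}))=\sigma(x(0))+\int_{0}^{T^{*}}C\bigl(\LL(x,\tau)+Dw(\tau)\bigr)\,\dd\tau$, which need not vanish. Definition~\ref{def:reduced_state} only asserts $x_{r}(T^{*})=x(T^{*})$ conditionally.

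Your fallback bound $\|B\,G^{-1}\sigma(x(T^{*}))\|\le c_{B}c_{G}c_{C}\|x(T^{*})\|$ is of order $M_{0}$, not $\epsilon_{0}$, so it cannot be absorbed into the peaking margin. The usable estimate comes from the integral representation, as in the paper: $\|B\,G^{-1}\sigma(x(T^{*}))\|\le c_{B}c_{G}c_{C}\,T^{*}\Lambda(M_{0})$.

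Even with that estimate, your first-exit argument only gives $\|x(T^{*})\|\le\|x(0)\|+T^{*}\Lambda(M_{0})$. The two contributions add, so you would need $(1+c_{B}c_{G}c_{C})\,T^{*}_{\max}\Lambda(M_{0})\le\epsilon_{0}$. This is stronger than the check $T^{*}_{\max}\Lambda(M_{0})<\epsilon_{0}$ of Step~\ref{alg:peaking}. The paper's Step~3 hides the same double use of $\epsilon_{0}$ by asserting $\|x(t)\|\le M_{0}-\epsilon_{0}$ on $[0,T^{*}]$, which the first-exit argument of Phase~1 does not deliver.
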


\begin{proof}
We prove the claim in three steps.

\textbf{Step 1: $s(T^{*}) = 0$ and $\sigma(x(T^{*})) = 0$.}
By definition of $T^{*}$, $s(T^{*}) = 0$. From~\eqref{eq:sliding_var},
\[
s(t) = \sigma(x(t)) - \sigma(x(0)) - \int_{0}^{t}C(\LL(x,\tau) + Dw(\tau))\dd\tau.
\]
Differentiating gives
$\dot{s}(t) = C(\dot{x}(t) - \LL(x,t) - Dw(t)) = C(\delta(x,t) + Bu(t))$
by~\eqref{eq:system}. Under the integral SMC construction with initial
condition $\sigma(x(0)) = 0$ chosen so that $s(0) = 0$ (Remark~\ref{rem:s0_bound}),
the integral term in~\eqref{eq:sliding_var} cancels the contribution of
$\LL + Dw$ along the closed-loop trajectory; in particular,
\[
\sigma(x(t)) = s(t) + \sigma(x(0)) + \int_{0}^{t}C(\LL(x,\tau)+Dw(\tau))\dd\tau.
\]
With $\sigma(x(0)) = 0$, $s(T^{*}) = 0$, and the integral evaluated along
the trajectory, $\sigma(x(T^{*})) = \int_{0}^{T^{*}}C(\LL+Dw)\dd\tau$. The
\emph{closed-loop dynamics} on $[0, T^{*}]$ ensure
\[
\dot{\sigma}(x(t)) = C\dot{x}(t) = C(\LL(x,t) + \delta(x,t) + Bu(t) + Dw(t)),
\]
and the control law~\eqref{eq:control} is designed precisely so that
$C\delta + CBu = -\eta\,\sgnH(s) - Ks + \tilde{w}$ (Theorem~\ref{thm:control}).
Hence $\dot\sigma(x(t)) = C(\LL+Dw) + \dot{s}(t)$, so
$\sigma(x(T^{*})) - \sigma(x(0)) = \int_{0}^{T^{*}}C(\LL+Dw)\dd\tau + s(T^{*}) - s(0) = \int_{0}^{T^{*}}C(\LL+Dw)\dd\tau$.
Combined with $\sigma(x(0)) = 0$, this gives $\sigma(x(T^{*})) = \int_{0}^{T^{*}}C(\LL+Dw)\dd\tau$.

\textbf{Step 2: Identification $x_{r}(T^{*}) = x(T^{*})$.}
By Definition~\ref{def:reduced_state},
$x_{r}(t) = x(t) - B G^{-1}\sigma(x(t))$. At $t = T^{*}$,
$\sigma(x(T^{*})) = \int_{0}^{T^{*}}C(\LL+Dw)\dd\tau$, which a priori need
not vanish. However, the \emph{integral SMC design} fixes the integral
constant in $s$~\eqref{eq:sliding_var} so that $\sigma(x(0)) = 0$ and the
free-motion contribution $\int C(\LL+Dw)\dd\tau$ is identically reproduced
inside the definition of $s$, yielding $\sigma(x(t)) = s(t) + \int_{0}^{t}C(\LL+Dw)\dd\tau$,
hence at $T^{*}$: $\sigma(x(T^{*})) = 0 + \int_{0}^{T^{*}}C(\LL+Dw)\dd\tau$.
Therefore
\[
x_{r}(T^{*}) = x(T^{*}) - B G^{-1}\!\int_{0}^{T^{*}}C(\LL(x,\tau)+Dw(\tau))\dd\tau.
\]
In the case where $\LL \equiv 0$ on the operating domain and $D = 0$ (the
test system convention, Remark~\ref{rem:LL_notation}), the integral vanishes
identically and $x_{r}(T^{*}) = x(T^{*})$ exactly. For the general case,
the integral perturbation admits the explicit majorization
\[
\|x_{r}(T^{*}) - x(T^{*})\|
\leq \|B G^{-1}\|_{\mathrm{op}}\!\int_{0}^{T^{*}}\!\!\|C\|_{\mathrm{op}}
(\|\LL(x,\tau)\|+\|D w(\tau)\|)\dd\tau
\leq c_{B}c_{G}c_{C}\,T^{*}_{\max}\,\Lambda(M_{0}),
\]
where $\Lambda(M_{0})$ majorizes $\sup_{\tau \in [0,T^{*}],
\|x\|\leq M_{0}}(\|\LL(x,\tau)\|+\|D\|w_{\max})$ (the operator-norm
constants $c_{B},c_{G},c_{C}$ are defined in
Assumption~\ref{ass:bounded_ops}; the explicit form of $\Lambda(M_{0})$
is recorded in equation~\eqref{eq:Lambda_def} below). The peaking condition
(Remark~\ref{rem:s0_bound}) selects $T^{*}_{\max}$ small enough so that
$c_{B}c_{G}c_{C}T^{*}_{\max}\Lambda(M_{0})\leq \epsilon_{0}$. Hence
\[
\|x_{r}(T^{*})\| \leq \|x(T^{*})\| + \epsilon_{0}.
\]

\textbf{Step 3: Bound $\|x_{r}(T^{*})\| \leq M_{0}$.}
Phase~1 of the proof of Proposition~\ref{prop:forward_inv} below (reaching
phase) guarantees $\|x(t)\| \leq M_{0} - \epsilon_{0}$ for all $t \in [0, T^{*}]$
via the peaking condition. Combined with the bound of Step~2,
$\|x_{r}(T^{*})\| \leq (M_{0} - \epsilon_{0}) + \epsilon_{0} = M_{0}$.
\end{proof}

\begin{proposition}[Forward Invariance]\label{prop:forward_inv}
If Algorithm~\ref{alg:init} terminates with Steps~\ref{alg:peaking}
and~\ref{alg:invariance} satisfied, then the following hold.

\begin{enumerate}
\item[\textup{(i)}] \textbf{Degenerate case $P^{*} = I$ (complete result).}
Every trajectory starting in $\mathcal{X} = \overline{B}_{M_{0}}$ satisfies
$\|x_{r}(t)\| \leq \sqrt{2}\,M_{0} \leq \epsStar$ for all $t \geq T^{*}$.
In particular, $\overline{B}_{\sqrt{2}M_{0}} \subseteq \overline{B}_{\epsStar}
= \mathcal{X}_{\epsStar}$ is a forward-invariant set.

\smallskip
\noindent\emph{Strict invariance.} When $P^{*} = I$, one has
$\kappa(P^{*}) = 1$ and $\lambda_{\min}(P^{*}) = 1$, so the general bound
$R$ of~\eqref{eq:R_general} reduces to $R = \sqrt{M_{0}^{2} + C_{\infty}(\mu^{*})/\beta^{*}}$.
The $M_{0}$-free design of Step~\ref{alg:invariance}
(Lemma~\ref{lem:Cinfty_scaling}) ensures $\beta^{*} > \kappa_{\infty}$, i.e.,
$C_{\infty}(\mu^{*})/\beta^{*} < M_{0}^{2}$ for all admissible $M_{0}$.
Hence $R^{2} < 2M_{0}^{2}$, giving $R \leq \sqrt{2}M_{0} \leq \epsStar$ as
above. \emph{In this degenerate case, the strict invariance
$\overline{B}_{R} \subseteq \overline{B}_{\sqrt{2}M_{0}} \subseteq
\overline{B}_{\epsStar}$ is therefore completely established by the
algorithm; no additional hypothesis is required.}

\item[\textup{(ii)}] \textbf{General case $P^{*} \neq I$: explicit trajectory
bound (acknowledged limitation).}
Every trajectory satisfies $\|x_{r}(t)\| \leq R$ for all $t \geq T^{*}$, where
\begin{equation}\label{eq:R_general}
R := \sqrt{\kappa(P^{*})\,M_{0}^{2}
+ \frac{C_{\infty}(\mu^{*})}{\lambda_{\min}(P^{*})\,\beta^{*}}},
\qquad \kappa(P^{*}) := \frac{\lambda_{\max}(P^{*})}{\lambda_{\min}(P^{*})}.
\end{equation}
The bound $R \leq \epsStar$ (i.e., trajectories remain inside the GOSL
admissibility domain) is guaranteed by Algorithm~\ref{alg:init}
(Step~\ref{alg:invariance} combined with $P^{*} \preceq \mu_{\mathrm{LMI}} I$),
which gives the explicit verifiable condition
\begin{equation}\label{eq:R_leq_epsStar_cond}
\kappa(P^{*})\,M_{0}^{2} + \frac{C_{\infty}(\mu^{*})}{\lambda_{\min}(P^{*})\,\beta^{*}}
\leq (\epsStar)^{2},
\end{equation}
which is checkable a posteriori once Algorithm~\ref{alg:lmi} has produced
$(P^{*},\beta^{*},\mu_{\mathrm{LMI}})$. The strict invariance
$\overline{B}_R \subseteq \overline{B}_{M_{0}}$ (absorbing set in the strict
sense) requires the additional condition $R \leq M_{0}$, equivalent to
\begin{equation}\label{eq:R_leq_M0_cond}
(\kappa(P^{*})-1)M_{0}^2 + \frac{C_{\infty}(\mu^{*})}{\lambda_{\min}(P^{*})\beta^{*}} \leq 0,
\end{equation}
which holds if and only if $\kappa(P^{*}) = 1$ (i.e., $P^{*} = I$ up to a positive
scalar) and $C_{\infty}(\mu^{*}) = 0$.

\smallskip
\noindent\emph{Status of the limitation.} In the general case
$\kappa(P^{*}) > 1$, the strict inclusion $R \leq M_{0}$ is not guaranteed
by Algorithm~\ref{alg:init} alone. This is an acknowledged limitation,
stated here, in the abstract, and in the conclusion;
it does not invalidate the subsequent stability analysis
because the weaker condition~\eqref{eq:R_leq_epsStar_cond} is sufficient
for the GOSL-based analysis of Section~\ref{sec:stability} to remain valid
(all GOSL constants are admissible on the larger ball
$\overline{B}_{\epsStar}$). A complete resolution of the strict-inclusion
case $\kappa(P^{*}) > 1$ (e.g., via a refined Lyapunov reshaping or an
adaptive scaling of $M_{0}$) is an open problem listed in the future
directions of Section~\ref{sec:conclusion}.
\end{enumerate}
\end{proposition}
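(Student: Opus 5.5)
The plan is a Lyapunov comparison argument on the reduced dynamics after the switching instant $T^{*}$, followed by a purely algebraic comparison of the resulting radius $R$ with $M_{0}$ and $\epsStar$.

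\textbf{Step 1 (initial bound).} Lemma~\ref{lem:xr_at_Tstar} gives $\|x_{r}(T^{*})\|\leq M_{0}$. Hence the Lyapunov function $V(x_{r})=x_{r}\trans P^{*}x_{r}$, which is real by Lemma~\ref{lem:quadratic}, satisfies $V(T^{*})\leq\lambda_{\max}(P^{*})M_{0}^{2}$ by Lemma~\ref{lem:eigenvalue_bound}.

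\textbf{Step 2 (differential inequality).} Along the reduced dynamics~\eqref{eq:reduced_dyn} I would derive
\[
\dot V\leq-\beta^{*}V+C_{\infty}(\mu^{*}).
\]
To do this I would:
\begin{itemize}
\item invoke the LMI certificate returned by Algorithm~\ref{alg:lmi}, i.e.\ Theorem~\ref{thm:beta_kyb} applied with the GOSL constants of Lemma~\ref{lem:osl_projection};
\item take $z=0$ in the GOSL inequality to handle $\delta$;
\item absorb the affine term $\ell_{\delta}^{\mathrm{tight}}\|x_{r}\|$ and the residual $w_{r}$ by Young's inequality with parameter $\mu_{\mathrm{Y}}$, which is exactly what produces the two summands of $C_{\infty}$.
\end{itemize}
The GOSL constants are only valid while the trajectory stays in the admissible ball. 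I would close this with a standard continuity (bootstrap) argument: let $t_{1}$ be the first exit time from $\overline{B}_{\epsStar}$, and show that the bound obtained on $[T^{*},t_{1})$ keeps $\|x_{r}\|\leq R\leq\epsStar$, so $t_{1}=\infty$. Grönwall's lemma then gives
\[
V(t)\leq e^{-\beta^{*}(t-T^{*})}V(T^{*})+\frac{C_{\infty}}{\beta^{*}}\leq\lambda_{\max}(P^{*})M_{0}^{2}+\frac{C_{\infty}}{\beta^{*}},
\]
and dividing by $\lambda_{\min}(P^{*})$ yields $\|x_{r}(t)\|\leq R$ with $R$ as in~\eqref{eq:R_general}. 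This proves the trajectory bound in (ii).

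\textbf{Step 3 (case (i), $P^{*}=I$).} Here $\kappa=\lambda_{\min}=1$, so $R^{2}=M_{0}^{2}+C_{\infty}/\beta^{*}$. Lemma~\ref{lem:Cinfty_scaling} gives $C_{\infty}\leq\kappa_{\infty}M_{0}^{2}$, and Step~\ref{alg:invariance} of Algorithm~\ref{alg:init} gives $\beta^{*}>\kappa_{\infty}$. Together these yield $C_{\infty}/\beta^{*}<M_{0}^{2}$, hence $R<\sqrt{2}M_{0}$. The termination test of Algorithm~\ref{alg:init} ensures $\sqrt{2}M_{0}\leq\epsStar$, which closes the bootstrap of Step~2 and gives forward invariance of $\overline{B}_{\sqrt{2}M_{0}}\subseteq\overline{B}_{\epsStar}$.

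\textbf{Step 4 (remaining claims of case (ii)).} When~\eqref{eq:R_leq_epsStar_cond} holds, it is literally $R^{2}\leq(\epsStar)^{2}$, which again closes the bootstrap. For the equivalence~\eqref{eq:R_leq_M0_cond}: the inequality $R\leq M_{0}$ rearranges to $(\kappa-1)M_{0}^{2}+C_{\infty}/(\lambda_{\min}\beta^{*})\leq0$. Both terms are nonnegative, since $\kappa\geq1$ and $C_{\infty}\geq0$ (the Young denominators $\rho^{\mathrm{eff}}_{\delta}-\mu$ and $\mu$ are positive). So the sum is nonpositive if and only if both terms vanish, i.e.\ $\kappa(P^{*})=1$ and $C_{\infty}(\mu^{*})=0$.

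\textbf{Main obstacle.} The hardest step is the rigorous derivation of $\dot V\leq-\beta^{*}V+C_{\infty}$ in Step~2. It relies on Theorem~\ref{thm:beta_kyb} and on the reduced-dynamics bookkeeping of $w_{r}$, which carries the extra factor $c_{B}c_{G}$ in general. It also requires keeping the bootstrap non-circular: the GOSL constants must be evaluated on $\overline{B}_{\epsStar}$ rather than on $\overline{B}_{M_{0}}$. My first attempt would be to define all constants on $\overline{B}_{\epsStar}$ from the outset and check that the LMI certificate is unchanged.
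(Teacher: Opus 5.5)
Your post-$T^{*}$ argument is the paper's Phase~2: Lemma~\ref{lem:xr_at_Tstar} for $V_{r}(x_{r}(T^{*}))\leq\lambda_{\max}(P^{*})M_{0}^{2}$, then the estimate of Proposition~\ref{prop:beta_stability} from $T^{*}$, dropping the exponential and dividing by $\lambda_{\min}(P^{*})$, then Step~\ref{alg:invariance} giving $C_{\infty}/\beta^{*}<M_{0}^{2}$ when $P^{*}=I$. Your algebra for~\eqref{eq:R_leq_M0_cond} is also fine. What you omit is the reaching phase, and that is a genuine gap. The paper's proof contains a Phase~1 on $[0,T^{*}]$. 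Let $\Lambda(M_{0})$ bound $\|\dot x\|$ while $\|x\|\leq M_{0}$, using $\|\Delta_{\mathrm{app}}\|\leq\cBplus(\deltabar_{\max}+R_{\max})$. A first exit time $\tau\leq T^{*}_{\max}$ would then give $\|x(\tau)\|\leq(M_{0}-\epsilon_{0})+T^{*}_{\max}\Lambda(M_{0})<M_{0}$ by the peaking check, a contradiction. This is the only place the hypothesis that Step~\ref{alg:peaking} holds is used. Moreover, Step~3 of the proof of Lemma~\ref{lem:xr_at_Tstar} explicitly defers to this Phase~1 for $\|x(t)\|\leq M_{0}-\epsilon_{0}$ before $T^{*}$. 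If you use that lemma as a black box without supplying the reaching-phase bound, that bound is proved nowhere (the dependency is circular). Your Step~1 needs this argument.

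Your bootstrap is something the paper does not attempt. The paper simply invokes Proposition~\ref{prop:beta_stability}, whose constants are computed with $M_{\mathcal{X}}=M_{0}$ although $R$ may exceed $M_{0}$. As you set it up, however, the bootstrap does not close, and your proposed remedy would break your Step~3. Recomputing the constants on $\overline{B}_{\epsStar}$ makes $\deltabar_{\max}=24C_{2}(\epsStar)^{3}$ and $R_{\max}=4\Cnc(\epsStar)^{3}$ independent of $M_{0}$. Then $C_{\infty}\leq\kappa_{\infty}M_{0}^{2}$ is lost, and $C_{\infty}/\beta^{*}<M_{0}^{2}$ no longer follows from Step~\ref{alg:invariance}.

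In case~(i) you can instead bootstrap on $\overline{B}_{\sqrt{2}M_{0}}$. The proof of Theorem~\ref{thm:beta_kyb} uses only $\|g(x,t)\|\leq\rho\|x\|+\ell_{g}$. Since $g(0,t)=0$, $\|\delta(x,t)\|\leq6C_{2}(\|x\|+\epsStar)^{2}\|x\|$, $6C_{2}(\sqrt{2}M_{0}+\epsStar)^{2}\leq2\rho_{\delta}(M_{0})$, and $\sqrt{2}L_{r}M_{0}\leq2L_{r}M_{0}$, the constant $\rho_{\delta}^{\mathrm{eff}}$ stays valid on that ball. Only $w_{\max}$ must be re-evaluated at radius $\sqrt{2}M_{0}$; this enlarges $\kappa_{\infty}$ by a bounded factor that the invariance test has to absorb.

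Two smaller points. First, taking $z=0$ in the GOSL inequality controls $\innerR{\delta(x_{r},t)}{x_{r}}$, not $\Real(x_{r}\trans P^{*}\delta(x_{r},t))$, so it does not help when $P^{*}\neq I$; use the norm bound, as Theorem~\ref{thm:beta_kyb} does. Second, treating~\eqref{eq:R_leq_epsStar_cond} as an a posteriori condition is the right reading. Step~\ref{alg:invariance} only yields $R^{2}<(\kappa(P^{*})+1)M_{0}^{2}$, which together with $\sqrt{2}M_{0}\leq\epsStar$ gives $R\leq\epsStar$ only when $\kappa(P^{*})=1$; the paper's proof does not establish that claim either.
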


\begin{proof}
\textbf{Phase~1 (Reaching, $0 \leq t \leq T^{*}$).}
With $\|w(t)\| \leq w_{\max}$ (Assumption~\ref{ass:disturbance}) and
$c_B = \|\Phi(B)\|_{\mathrm{op}}$, $c_C = \|\Phi(C)\|_{\mathrm{op}}$,
$c_G = \|\Phi(G^{-1})\|_{\mathrm{op}}$ (Assumption~\ref{ass:bounded_ops}),
define
\begin{equation}\label{eq:Lambda_def}
\Lambda(M_{0}) := L_{\LL}M_{0} + \deltabar_{\max}(M_{0})
+ c_{B}\bigl[\cBplus(\deltabar_{\max}(M_{0})+R_{\max}(M_{0}))
+ c_{G}(\eta + \|K\|_{\mathrm{op}}M_{0})\bigr]
+ \|\Phi(D)\|_{\mathrm{op}}w_{\max}.
\end{equation}
Suppose $\|x(\tau)\|=M_{0}$ for some first exit time $\tau \leq T^{*}_{\max}$.
Integrating~\eqref{eq:system}, using $\|\Delta_{\mathrm{app}}\|\leq\cBplus\|\Pi_{\mathrm{Im}(B)}d\omega_{0}\|
\leq\cBplus(\bar\delta_{\max}+R_{\max})$
(by $\Pi_{\mathrm{Im}(B)}d\omega_{0}=\delta-\Pi_{\mathrm{Im}(B)}R_{0}$ from~\eqref{eq:delta_decomp}):
$\|x(\tau)\| \leq (M_{0}-\epsilon_{0}) + T^{*}_{\max}\Lambda(M_{0}) < M_{0}$
by Step~\ref{alg:peaking} --- contradiction.

\textbf{Phase~2 (Sliding, $t \geq T^{*}$).}
By Lemma~\ref{lem:xr_at_Tstar}, $V_{r}(x_{r}(T^{*})) \leq \lambda_{\max}(P^{*})M_{0}^{2}$.
By Proposition~\ref{prop:beta_stability}:
$V_{r}(x_{r}(t)) \leq e^{-\beta^{*}(t-T^{*})}V_{r}(x_{r}(T^{*})) + C_{\infty}(\mu^{*})/\beta^{*}$.
Since $e^{-\beta^{*}(t-T^{*})} \leq 1$:
\[
\|x_{r}(t)\|^{2} \leq \kappa(P^{*})M_{0}^{2}
+ \frac{C_{\infty}(\mu^{*})}{\lambda_{\min}(P^{*})\beta^{*}} = R^2.
\]
For $P^{*} = I$: $\kappa(P^{*}) = 1$ and Step~\ref{alg:invariance} (in its
$M_{0}$-free form, see Lemma~\ref{lem:Cinfty_scaling}) gives
$\beta^{*} > \kappa_{\infty}/\lambda_{\min}(I) = \kappa_{\infty}$, equivalent to
$M_{0}^{2} > C_{\infty}/\beta^{*}$ for all admissible $M_{0}$.
Hence $R^2 = M_{0}^2 + C_{\infty}/\beta^{*} < 2M_0^2$, yielding
$\|x_{r}(t)\| \leq \sqrt{2}M_{0} \leq \epsStar$.
\end{proof}

\begin{remark}[Termination and Effective Domain]\label{rem:outer_loop_limitation}
By Lemma~\ref{lem:Cinfty_scaling}, Algorithm~\ref{alg:init} \emph{is
conjectured to terminate} after finitely many halvings under
Assumption~\ref{ass:lmi_feasibility} and the sufficient conditions of
Remark~\ref{rem:termination_status}.
From~\eqref{eq:LJ_prime_def}:
\begin{equation}\label{eq:rho_delta_exact}
\rho_{\delta}(M_{0}) = 6C_{2}(M_{0}+\epsStar)(3M_{0}+\epsStar).
\end{equation}
The effective domain of attraction is $\overline{B}_{M_{0}^{\mathrm{eff}}}$ with
$M_{0}^{\mathrm{eff}}=\min\{\epsStar,\,M_{\max}^{*}\}$, where $M_{\max}^{*}$ is
determined implicitly by the threshold condition
$\beta^{*}\lambda_{\min}(P^{*}) = \kappa_{\infty}$
of Lemma~\ref{lem:Cinfty_scaling}, with both $\beta^{*}$ and $\lambda_{\min}(P^{*})$
implicit functions of the LMI parameters at the prevailing $M_{0}$.
\end{remark}

\section{Cohomological Matching Condition}\label{sec:cohomology}

Classical sliding-mode control requires the matched-perturbation condition
$\delta(x,t)\in\mathrm{Im}(B)$ for every state and time --- a check that
must be performed trajectory by trajectory. We replace it with a
structural condition that splits the requirement in two: a cohomological
part, which holds automatically in the homogeneous case, and an algebraic
part on the pair $(B,\LL)$ alone, checked once at design time.

\begin{definition}[CMC]\label{def:cmc}
For clarity, define the \emph{evaluated coboundary image}
\[
\mathrm{im}_{\mathrm{eval}}(d) := \{d\omega(x,y,z) : \omega \in C^{2}_{\varepsilon},\;
x,y,z \in B(0,\epsStar)\} \subseteq \HH^{n}.
\]
The system satisfies the \emph{Cohomological Matching Condition} if:
\begin{enumerate}
\item[(CMC-1)] \textbf{Cohomological condition:}
$\delta(x,t) \in \mathrm{im}_{\mathrm{eval}}(d)$ for all $(x,t)$
(Assumption~\ref{ass:cohom}). This holds unconditionally for homogeneous
quasi-Lie brackets (\cite[Theorem~4.12, Lemma~B.1]{Athmouni2026}).
\item[(CMC-2)] \textbf{Structural condition:}
$\mathrm{im}_{\mathrm{eval}}(d) \subseteq \mathrm{Im}(B) \subseteq \HH^{n}$.
This is a condition on the pair $(B, \LL)$ alone (independent of
trajectories) and is verified once at design time; see
Remark~\ref{rem:cmc_structural}.
\end{enumerate}
Both conditions are formulated within the Chevalley--Eilenberg
complex $(C^{*}_{\varepsilon},d)$ of Definition~\ref{def:cochain}
(where $d^{2}=0$ holds in the homogeneous case
by~\cite[Lemma~B.1]{Athmouni2026}). We abbreviate
$\mathrm{im}_{\mathrm{eval}}(d)$ as $\mathrm{im}(d)$ in the sequel when the
context is unambiguous.
\end{definition}

\begin{remark}[Verification of CMC-2: Structural Condition on $(B,\LL)$]
\label{rem:cmc_structural}
Condition~(CMC-2) is not a consequence of (CMC-1); it is an independent
algebraic condition on the control matrix $B$ and the bracket $\LL$.
Concretely, it requires that for all $\omega \in C^{2}_{\varepsilon}$
and all $x,y,z \in B(0,\epsStar)$:
\[
d\omega(x,y,z) \in \mathrm{Im}(B).
\]
This can be verified as follows: compute a generating family
$\{e_1,\ldots,e_k\} \subseteq \HH^{n}$ for $\mathrm{im}_{\mathrm{eval}}(d)$
using the structure of $\LL$ (e.g., by evaluating $d\omega_{0}$ for the test bracket
of Section~\ref{sec:sim} at sufficiently many triplets), and check that each
$e_i$ belongs to $\mathrm{Im}(B_\RR) = \Phi(\mathrm{Im}(B)) \subseteq \RR^{4n}$
in the real representation. For the test system of Section~\ref{sec:sim},
this is verified directly in Lemma~\ref{lem:omega0_image}.
\end{remark}

\begin{proposition}[CMC Implies Defect Matching]\label{prop:cmc_consequence}
Under the CMC and within the Chevalley--Eilenberg complex of
Definition~\ref{def:cochain}:
$\delta(x,t) \in \mathrm{Im}(B)$ for all $x \in \mathcal{X}$, $t \geq 0$.
\end{proposition}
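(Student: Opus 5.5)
The plan is a direct chaining of the two halves of Definition~\ref{def:cmc}. Fix $x\in\mathcal{X}$ and $t\geq 0$. By (CMC-1), which holds in the homogeneous case by Assumption~\ref{ass:cohom} and the cited rigidity result (Theorem~\ref{thm:rigidity}), there is a cochain $\omega\in C^{2}_{\varepsilon}$ and a triple $(x',y',z')\in B(0,\epsStar)^{3}$ with $\delta(x,t)=d\omega(x',y',z')$. In other words, $\delta(x,t)\in\mathrm{im}_{\mathrm{eval}}(d)$. Then (CMC-2) gives $\mathrm{im}_{\mathrm{eval}}(d)\subseteq\mathrm{Im}(B)$, so $\delta(x,t)\in\mathrm{Im}(B)$. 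Since $(x,t)$ was arbitrary, this proves the claim.

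A useful consistency check, and a second route that does not rely on the abstract form of (CMC-1), uses the explicit formula \eqref{eq:defect_projection_vector}. There $\delta(x,t)=\Pi_{\mathrm{Im}(B)}[\cdots]$. By Lemma~\ref{lem:BtB_invertible}, $\mathrm{Im}(\Pi_{\mathrm{Im}(B)})=\mathrm{Im}(B)$, so $\delta(x,t)\in\mathrm{Im}(B)$ holds directly from the definition. I would record this as a remark: the projector already forces membership, and the CMC explains why the projection loses nothing. That is, if the unprojected defect lies in $\mathrm{im}_{\mathrm{eval}}(d)\subseteq\mathrm{Im}(B)$, then by the idempotence part of Lemma~\ref{lem:BtB_invertible} it agrees with its projection. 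This matches the claim $\Pi_{\mathrm{Im}(B)}\delta=\delta$ in Remark~\ref{rem:projector}.

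The main obstacle is a domain mismatch. The selection arguments $x+\xi_{i}^{*}$ may have norm up to $M_{\mathcal{X}}+\epsStar$, which can exceed the ball $B(0,\epsStar)$ on which $\mathrm{im}_{\mathrm{eval}}(d)$ is defined. To handle this, I would first try to avoid it, since (CMC-1) is stated for all $(x,t)$ and can be invoked as a hypothesis. If an intrinsic argument is wanted, I would rescale instead. The map $(x',y',z')\mapsto d\omega(x',y',z')$ is trilinear, so for $\lambda>0$ small the rescaled triple $\lambda x',\lambda y',\lambda z'$ lies in $B(0,\epsStar)$. Its value is $\lambda^{3}d\omega(x',y',z')$, and multiplying back by $\lambda^{-3}$ is legitimate because $\mathrm{Im}(B)$ is an $\RR$-linear subspace, being closed under real scalars by $B(uq)=(Bu)q$ with $q\in\RR$. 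This shows that membership in $\mathrm{Im}(B)$ does not depend on which ball the evaluation points come from, and it closes the argument.
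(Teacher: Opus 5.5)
Your proposal is correct, and your first paragraph is exactly the paper's proof: (CMC-1) places $\delta(x,t)$ in $\mathrm{im}_{\mathrm{eval}}(d)$, and (CMC-2) places that set inside $\mathrm{Im}(B)$. Your side observation is also valid: membership already follows from $\delta=\Pi_{\mathrm{Im}(B)}[\cdots]$ together with $\mathrm{Im}(\Pi_{\mathrm{Im}(B)})=\mathrm{Im}(B)$ (Lemma~\ref{lem:BtB_invertible}), independently of the CMC; the rescaling discussion is not needed, since (CMC-1) is assumed for all $(x,t)$.
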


\begin{proof}
By (CMC-1), $\delta \in \mathrm{im}(d)$. By (CMC-2),
$\mathrm{im}(d) \subseteq \mathrm{Im}(B)$. Hence $\delta \in \mathrm{Im}(B)$.
\end{proof}

\begin{remark}[Engineering Value]\label{rem:cmc_status}
Condition~(CMC-1) holds for all $t\geq 0$ by Assumption~\ref{ass:cohom}.
Condition~(CMC-2) depends only on the fixed pair $(B,\LL)$ (since $d$
is determined by $\LL$ via the coboundary
formula~\eqref{eq:coboundary_alt}) and is verified once at
design time, replacing continuous trajectory-dependent matching checks.
The two conditions serve distinct logical roles: (CMC-1) is a property of the
dynamics (Jacobi defect lies in the image of $d$); (CMC-2) is a property of the
actuator geometry ($\mathrm{Im}(B)$ is rich enough to absorb all coboundaries).
\end{remark}

\begin{corollary}[Exact Defect Rejection and Cohomological Decomposition]\label{cor:exact_rejection}
Under Assumptions~\ref{ass:rank},~\ref{ass:cohom}, and the CMC:
\begin{enumerate}
\item[\textup{(i)}] The feedforward term
\begin{equation}\label{eq:Delta_def}
\Delta(x,t) = \Phi^{-1}\!\left(B_{\RR}^{+}\,\Phi(\delta(x,t))\right)
\end{equation}
satisfies $B\Delta(x,t) = \delta(x,t)$.
\item[\textup{(ii)}] By Theorem~\ref{thm:rigidity}, using $\mathcal{J} = d\omega_{0}+R_{0}$
(from $R_{0}:=\mathcal{J}-d\omega_{0}$, eq.~\eqref{eq:R0_def}), $\delta(x,t)$
admits the cohomological decomposition
\begin{equation}\label{eq:delta_decomp}
\delta(x,t) = \Pi_{\mathrm{Im}(B)}\bigl[d\omega_{0}(x,x+\xi_{1}^{*},x+\xi_{2}^{*})
+ R_{0}(x,x+\xi_{1}^{*},x+\xi_{2}^{*})\bigr]
= \delta_{d}(x,t) + \Pi_{\mathrm{Im}(B)}R_{0},
\end{equation}
where $\delta_{d} := \Pi_{\mathrm{Im}(B)}d\omega_{0}$ is the
``coboundary part'' and $\Pi_{\mathrm{Im}(B)}R_{0}$ is the
``non-commutative residual'' (in the projected sense). Equivalently,
$\delta_{d} = \delta-\Pi_{\mathrm{Im}(B)}R_{0}$.
\item[\textup{(iii)}] Since the feedforward $\Delta$ rejects $\delta$
exactly (part~(i)), the cohomological decomposition does \emph{not} introduce
any uncompensated term in the closed-loop sliding dynamics.
The non-commutative residual $R_{\mathrm{nc}}^{B}$ defined
in~\eqref{eq:RncB_def} appears only as an
\emph{implementation-error margin} when $\Delta$ is computed via the
cohomological approximation $B^{+}d\omega_{0}$ rather than the exact
$B^{+}\delta$.
\end{enumerate}
\end{corollary}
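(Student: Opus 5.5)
The plan is to treat the three parts in order, using only Proposition~\ref{prop:cmc_consequence}, Lemma~\ref{lem:BtB_invertible} and the definition of $R_{0}$ in~\eqref{eq:R0_def}.

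For part~(i), I will work entirely in the real representation. By Proposition~\ref{prop:cmc_consequence}, $\delta(x,t)\in\mathrm{Im}(B)$, so $\delta(x,t)=Bu$ for some $u\in\HH^{m}$. Applying $\Phi$ and Lemma~\ref{lem:phi_homomorphism} gives $\Phi(\delta)=B_{\RR}\Phi(u)$. Since $B_{\RR}$ has full column rank (Lemma~\ref{lem:BtB_invertible}), $B_{\RR}^{+}=(B_{\RR}^{T}B_{\RR})^{-1}B_{\RR}^{T}$ is a left inverse, so $B_{\RR}^{+}\Phi(\delta)=\Phi(u)$, and hence $\Delta(x,t)=u$. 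Then $B\Delta=Bu=\delta$. An equivalent route is $\Phi(B\Delta)=B_{\RR}B_{\RR}^{+}\Phi(\delta)=\Pi_{B_{\RR}}\Phi(\delta)=\Phi(\Pi_{\mathrm{Im}(B)}\delta)=\Phi(\delta)$, using the fixed-point property of the projector from Lemma~\ref{lem:BtB_invertible}. I will also remark that $\Delta$ is well defined because $\Phi$ is bijective, and that measurability of $\Delta$ is inherited from the measurable selection.

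For part~(ii), I will start from Definition~\ref{def:jacobi_defect}: $\delta=\Pi_{\mathrm{Im}(B)}\mathcal{J}(x,x+\xi_{1}^{*},x+\xi_{2}^{*},t)$. Substituting the pointwise identity $\mathcal{J}=d\omega_{0}+R_{0}$ from~\eqref{eq:R0_def}, evaluated at the same triple, and using $\RR$-linearity of $\Pi_{\mathrm{Im}(B)}$ (it is $\Phi^{-1}\circ\Pi_{B_{\RR}}\circ\Phi$) yields~\eqref{eq:delta_decomp}. The final equivalent form is just a rearrangement.

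\emph{The only subtlety} is time dependence: $\omega_{0}$ is built from the time-independent $\psi$, so I will either restrict to time-independent $\mathcal{J}$ (as in Definition~\ref{def:quasi_lie}) or read $R_{0}$ as $R_{0}(\cdot,t)$ defined by the same subtraction. In the latter reading the identity holds tautologically.

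Part~(iii) is interpretive. Inserting the control $u=-\Delta+\dots$ into~\eqref{eq:system} gives $\delta+B(-\Delta)=0$ by part~(i), so no term from $\delta$ survives in $\dot s=C(\delta+Bu)$. If instead the implementable $\Delta_{\mathrm{app}}=B^{+}\Pi_{\mathrm{Im}(B)}d\omega_{0}$ is used, the mismatch is
\[
B(\Delta-\Delta_{\mathrm{app}})=\Pi_{\mathrm{Im}(B)}R_{0}=B R_{\mathrm{nc}}^{B},
\]
which follows from part~(ii) and part~(i) applied to $\Pi_{\mathrm{Im}(B)}R_{0}\in\mathrm{Im}(B)$. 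This error is bounded by Proposition~\ref{prop:residual_bound}.

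I expect the main obstacle to be the last identity, since $R_{\mathrm{nc}}^{B}$ in~\eqref{eq:RncB_def} applies $B_{\RR}^{+}$ to $R_{0}$ rather than to $\Pi_{\mathrm{Im}(B)}R_{0}$. These agree because $B_{\RR}^{+}\Pi_{B_{\RR}}=B_{\RR}^{+}$, and I will check this explicitly.
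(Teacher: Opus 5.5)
Your proposal is correct and follows the paper's proof essentially step for step. Part~(i) goes via $\delta\in\mathrm{Im}(B)$ and $B_{\RR}B_{\RR}^{+}\Phi(\delta)=\Phi(\delta)$, part~(ii) applies the $\RR$-linear projector to $\mathcal{J}=d\omega_{0}+R_{0}$, and part~(iii) computes the mismatch $\pm\Pi_{\mathrm{Im}(B)}R_{0}$ between the exact and the cohomological feedforward. Your extra identification $\Pi_{\mathrm{Im}(B)}R_{0}=B\,R_{\mathrm{nc}}^{B}$, via $B_{\RR}^{+}\Pi_{B_{\RR}}=B_{\RR}^{+}$, is also correct; it is exactly the step the paper defers to the proof of Theorem~\ref{thm:control}.
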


\begin{proof}
(i) $\delta \in \mathrm{Im}(B)$ by Proposition~\ref{prop:cmc_consequence}.
Since $\Phi(\delta) \in \mathrm{Im}(B_{\RR})$, we have
$B_{\RR}\,B_{\RR}^{+}\,\Phi(\delta) = \Phi(\delta)$.
Applying $\Phi^{-1}$ gives $B\Delta = \delta$.

(ii) Apply $\Pi_{\mathrm{Im}(B)}$ to both sides of the decomposition
$\mathcal{J} = d\omega_{0} + R_{0}$ from~\eqref{eq:R0_def}.

(iii) Substituting $\Delta = \Phi^{-1}(B_{\RR}^{+}\Phi(\delta))$ into the
sliding dynamics rejects $\delta$ exactly. The residual
$R_{\mathrm{nc}}^{B}$ enters only when $\Delta$ is replaced by the
\emph{cohomological approximation} $\widetilde{\Delta} = \Phi^{-1}(B_{\RR}^{+}\Phi(\delta_{d}))
= \Phi^{-1}(B_{\RR}^{+}\Phi(\delta - \Pi_{\mathrm{Im}(B)}R_{0}))$ (using
$\delta_{d} = \delta - \Pi_{\mathrm{Im}(B)}R_{0}$).
Then $B\widetilde{\Delta} = \delta - \Pi_{\mathrm{Im}(B)}R_{0}$ and the
implementation error $B(\widetilde{\Delta} - \Delta) = -\Pi_{\mathrm{Im}(B)}R_{0}$
appears as a residual perturbation (the sign is irrelevant for the
$L^{2}$-type estimates used in Theorem~\ref{thm:control}).
\end{proof}

\begin{remark}[Origin of $R_{\mathrm{nc}}^{B}$ in the sliding dynamics]
\label{rem:Rnc_origin}
In the idealized formulation where the controller has direct access to
$\delta$ via the maximizing selection $\xi^{*}(x,t)$, the feedforward
$\Delta$ rejects $\delta$ exactly and there is no residual. In practice, the
selection $\xi^{*}$ is computed via the cohomological approximation
$d\omega_{0}$, and the difference $R_{0} = \mathcal{J} - d\omega_{0}$
(eq.~\eqref{eq:R0_def}) appears as the unavoidable error. The constant $R_{\max}$ in
Proposition~\ref{prop:residual_bound} is thus a \emph{robustness margin}
quantifying the worst-case implementation error. Theorem~\ref{thm:control}
below proves that this margin is absorbed by the gain $\eta$ via the term
$c_{C}\,c_{B}\,\cBplus\,R_{\max}$ in~\eqref{eq:eta_condition}.
\end{remark}

\begin{remark}[Shared dependence of $\delta$ and $R_{\mathrm{nc}}^{B}$ on $\xi^{*}$]
\label{rem:shared_selection}
Both the Jacobi defect $\delta(x,t)$ in~\eqref{eq:defect_projection_vector}
and the non-commutative residual $R_{\mathrm{nc}}^{B}(x,t)$
in~\eqref{eq:RncB_def} are defined using the \emph{same} measurable
selection $\xi^{*}(x,t)$. In particular, the cohomological feedforward
$\Delta_{\mathrm{app}}$ in~\eqref{eq:Delta_app_def} evaluates $d\omega_{0}$
at the triplet $(x, x+\xi_{1}^{*}(x,t), x+\xi_{2}^{*}(x,t))$, and the
identity $B\Delta_{\mathrm{app}} = \delta - \Pi_{\mathrm{Im}(B)}R_{0}$ of
Corollary~\ref{cor:exact_rejection}(ii) relies on this shared selection.
\emph{Consequence}: when an approximation $\widetilde{\xi}(x,t) \neq \xi^{*}(x,t)$
is used (Remark~\ref{rem:implementable}), the closed-loop dynamics develop a
\emph{coupled} error term affecting both $\delta$ rejection and the residual
estimate. The total error remains bounded by the sum
$\|\delta_{\widetilde{\xi}} - \delta_{\xi^{*}}\| + \cBplus\|R_{0}(x,x+\widetilde{\xi})\|
\leq 2\deltabar_{\max} + \cBplus R_{\max}$, but the two contributions are
not separable: this is the structural cost of the shared selection. The
implementation strategies of Remark~\ref{rem:implementable} treat both
errors jointly via the enlarged gain $\eta$.
\end{remark}

\section{Sliding Surface Design}\label{sec:design}

The integral sliding-mode architecture below --- integral sliding variable,
reaching-phase Lyapunov argument, finite-time convergence to $\{s=0\}$ ---
is classical~\cite{Utkin1992,Edwards1998}. Two ingredients depart from
the textbook construction. First, the control law~\eqref{eq:control}
includes a cohomological feedforward $\Delta_{\mathrm{app}}$
(see~\eqref{eq:Delta_app_def}), built from the approximating 2-cochain
$\omega_{0}$ of Theorem~\ref{thm:rigidity}. In standard SMC the matched
perturbation $\delta$ is absorbed entirely by the switching gain $\eta$;
here $\Delta_{\mathrm{app}}$ pre-cancels $\delta$ up to a controlled
residual $\|B\Delta_{\mathrm{app}} - \delta\| \leq \cBplus R_{\max}$
(Corollary~\ref{cor:exact_rejection}), and only this residual is absorbed
by $\eta$. The gain drops from $O(\|\delta\|_{\infty})$ to
$O(\Cnc R_{\max})$. Second, the pointwise matching condition
$\delta(x,t)\in\mathrm{Im}(B)$ is replaced by the algebraic CMC of
Section~\ref{sec:cohomology}, a structural condition on $(B,\LL)$ alone.
The remaining derivations
(Lemmas~\ref{lem:sliding_dyn},~\ref{lem:G_invertible},
Theorem~\ref{thm:control}) are standard and included to keep the paper
self-contained.

The integral sliding variable is:
\begin{equation}\label{eq:sliding_var}
s(t)=\sigma(x(t))-\sigma(x(0))
-\int_{0}^{t}C\bigl(\LL(x,\tau)+Dw(\tau)\bigr)\,\dd\tau.
\end{equation}

\begin{lemma}[Sliding Variable Dynamics]\label{lem:sliding_dyn}
Under Assumption~\ref{ass:output}:
\begin{equation}\label{eq:sdot}
\dot{s}(t) = C\bigl(\delta(x,t) + Bu(t)\bigr).
\end{equation}
\end{lemma}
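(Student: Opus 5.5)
The plan is to differentiate the integral sliding variable~\eqref{eq:sliding_var} term by term and then substitute the system dynamics~\eqref{eq:system}.

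\textbf{Step 1: the output term.} Under Assumption~\ref{ass:output}, $\sigma(x)=Cx$ with $C\in\HH^{m\times n}$ a constant matrix. The map $x\mapsto Cx$ is $\RR$-linear, and it commutes with differentiation in $t$. This is cleanest to see in the real representation: $\Phi(Cx)=\Phi(C)\Phi(x)$ by Lemma~\ref{lem:phi_homomorphism}, and $\Phi$ is a real-linear isometry. Hence $\tfrac{\dd}{\dd t}\sigma(x(t))=C\dot x(t)$ along any absolutely continuous trajectory, holding for a.e.\ $t$ in the Filippov/Carath\'eodory sense. The term $\sigma(x(0))$ is constant and differentiates to zero.

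\textbf{Step 2: the integral term.} By Assumption~\ref{ass:bounded_BG}, $\|\LL(x,\tau)\|\le L_{\LL}\|x\|$, and by Assumption~\ref{ass:disturbance}, $w$ is bounded. So the integrand $\tau\mapsto C(\LL(x(\tau),\tau)+Dw(\tau))$ is locally integrable along the trajectory. Lebesgue's differentiation theorem then gives
\[
\tfrac{\dd}{\dd t}\int_{0}^{t}C\bigl(\LL(x,\tau)+Dw(\tau)\bigr)\dd\tau=C\bigl(\LL(x,t)+Dw(t)\bigr)
\]
for a.e.\ $t$, and for every $t$ if the integrand is continuous.

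\textbf{Step 3: combine.} Putting the two steps together,
\[
\dot s=C\dot x-C\LL(x,t)-CDw(t).
\]
Inserting $\dot x=\LL(x,t)+\delta(x,t)+Bu+Dw$ from~\eqref{eq:system} and using distributivity of quaternionic matrix multiplication, which holds by associativity of $\HH$ as in Remark~\ref{rem:convention}, the $\LL$ and $Dw$ terms cancel exactly. This leaves $\dot s=C(\delta(x,t)+Bu(t))$, which is~\eqref{eq:sdot}.

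\textbf{Main obstacle.} The only delicate point is regularity. The control contains $\sgnH(s)$ and the selection $\xi^{*}$ is only measurable, so $\dot x$ exists merely almost everywhere. I would therefore state the identity for a.e.\ $t$, which is the standard Filippov interpretation of Lemma~\ref{lem:filippov_real}. For this I would rely on Assumption~\ref{ass:continuity} to ensure that $\delta(x(t),t)$ is measurable in $t$ along trajectories.
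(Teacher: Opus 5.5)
Your proposal is correct and follows the same route as the paper: differentiate \eqref{eq:sliding_var}, substitute \eqref{eq:system}, and cancel the $C(\LL(x,t)+Dw(t))$ terms. You also add that the identity holds for a.e.\ $t$ along absolutely continuous (Filippov/Carath\'eodory) solutions; the paper leaves this implicit, and it is a sensible refinement.
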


\begin{proof}
Differentiating~\eqref{eq:sliding_var} and substituting~\eqref{eq:system}:
\begin{align*}
\dot{s}(t)
&= C\bigl(\LL(x,t)+\delta(x,t)+Bu(t)+Dw(t)\bigr)
- C\bigl(\LL(x,t)+Dw(t)\bigr)
= C\bigl(\delta(x,t)+Bu(t)\bigr).
\end{align*}
\end{proof}

\begin{lemma}[Invertibility of $G$]\label{lem:G_invertible}
Under Assumption~\ref{ass:alignment}, $G = CB$ is two-sided invertible over $\HH$.
\end{lemma}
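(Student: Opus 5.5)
The plan is to pass to the real representation and show that $\Phi(G)$ is an invertible real $4m\times 4m$ matrix. Then I pull the inverse back to a quaternionic matrix and use the homomorphism property (Lemma~\ref{lem:phi_homomorphism}) to get a two-sided inverse over $\HH$.

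\textbf{Step 1 (injectivity of $G$).} Let $u\in\HH^{m}$ with $Gu=0$. Then $u\trans G\trans G u = (Gu)\trans(Gu)=0$, so its real part vanishes. Assumption~\ref{ass:alignment} gives $\Real(u\trans G\trans G u)>\alpha\|u\|^{2}$ for $u\neq 0$, which forces $u=0$. More quantitatively, $\|Gu\|^{2}=\Real((Gu)\trans(Gu))\geq\alpha\|u\|^{2}$. Here I use that $\innerR{v}{v}=\Real(v\trans v)$ from Definition~\ref{def:inner}, together with the anti-automorphism $(Gu)\trans=u\trans G\trans$.

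\textbf{Step 2 (invertibility of $\Phi(G)$).} Suppose $\Phi(G)v=0$ for some $v\in\RR^{4m}$. Write $v=\Phi(u)$, which is possible because $\Phi$ is bijective. By Lemma~\ref{lem:phi_homomorphism} we get $\Phi(Gu)=0$, hence $Gu=0$, and Step~1 gives $u=0$. So $\Phi(G)$ is an injective square real matrix and therefore invertible. Isometry also gives $\sigma_{\min}(\Phi(G))\geq\sqrt{\alpha}$, which yields $c_G\leq \alpha^{-1/2}$ as a by-product.

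\textbf{Step 3 (pulling back the inverse).} This is the only delicate point: the inverse $\Phi(G)^{-1}$ must itself lie in $\Phi(\HH^{m\times m})$. I would handle it as follows. The image $\Phi(\HH^{m\times m})$ is a real subalgebra of $\RR^{4m\times 4m}$ containing the identity. It is exactly the commutant of the right-scalar multiplications $x\mapsto xq$, because quaternionic matrices act compatibly with right scalars (Remark~\ref{rem:convention}). A commutant is closed under taking inverses: if $M$ commutes with $R_q$, then so does $M^{-1}$. Alternatively, and more elementarily, the Cayley--Hamilton theorem writes $\Phi(G)^{-1}$ as a real polynomial in $\Phi(G)$, and every such polynomial lies in the subalgebra $\Phi(\HH^{m\times m})$.

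So $\Phi(G)^{-1}=\Phi(H)$ for some $H\in\HH^{m\times m}$. Then $\Phi(GH)=\Phi(HG)=I=\Phi(I_m)$, and injectivity of $\Phi$ on matrices gives $GH=HG=I_m$. Hence $G^{-1}=H$ is a two-sided inverse over $\HH$.
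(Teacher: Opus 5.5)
Your proof is correct. Steps 1 and 2 coincide with the paper's argument: the alignment assumption forces $\ker G=\{0\}$, hence $\ker\Phi(G)=\{0\}$.

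The two proofs differ in the last step. The paper closes by citing \cite[Theorem~4.3]{Zhang1997}, which says $G$ is invertible iff its complex adjoint $\chi_G$ is, and leaves implicit the passage from invertibility of the real form $\Phi(G)$ to that of $\chi_G$. You instead prove directly that $\Phi(\HH^{m\times m})$ is closed under inversion.

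Your Cayley--Hamilton version is the cleaner of your two arguments. It uses only that $\Phi$ is an injective, unital, $\RR$-linear algebra homomorphism, and it exhibits $G^{-1}$ explicitly as a real polynomial in $G$.

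In the commutant version you justify only one inclusion: quaternionic matrices commute with right scalar multiplications. The reverse inclusion also needs stating. An $\RR$-linear $T$ commuting with all $x\mapsto xq$ is right-$\HH$-linear, hence equals $x\mapsto Hx$, where $H$ has columns $T(e_j)$. The Cayley--Hamilton route makes this unnecessary.

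What each approach buys: your route is self-contained and also yields the quantitative bound $c_G=\|\Phi(G)^{-1}\|_{\mathrm{op}}\leq\alpha^{-1/2}$. The paper only asserts that $c_G$ is finite (Assumption~\ref{ass:bounded_BG}). The paper's route is shorter, at the cost of outsourcing the one genuinely quaternionic step.
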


\begin{proof}
$Gx=0$ implies $\Real(x\trans G\trans Gx)=0$, contradicting
Assumption~\ref{ass:alignment}. Hence $\ker(\Phi(G))=\{0\}$ and $\Phi(G)$ is
invertible. By~\cite[Theorem~4.3]{Zhang1997}, two-sided invertibility
follows ($G$ invertible $\iff$ its complex adjoint representation is
invertible).
\end{proof}

\begin{theorem}[Sliding Mode Control with Cohomological Feedforward]\label{thm:control}
Let $(M_{0}, \eta, P^{*}, \beta^{*})$ be the tuple returned by Algorithm~\ref{alg:init}.
Under Assumptions~\ref{ass:rank}--\ref{ass:gain_K} and the CMC, define the
\emph{cohomological feedforward}
\begin{equation}\label{eq:Delta_app_def}
\Delta_{\mathrm{app}}(x,t) := \Phi^{-1}\!\left(B_{\RR}^{+}\,
\Phi(\Pi_{\mathrm{Im}(B)}\, d\omega_{0}(x,x+\xi_{1}^{*}(x,t),x+\xi_{2}^{*}(x,t)))\right),
\end{equation}
which uses the explicit finite-rank component $\omega_{0}$ in place of the
unknown $\delta$. The control law
\begin{equation}\label{eq:control}
u(t)=-\Delta_{\mathrm{app}}(x,t)-G^{-1}(\eta\,\sgnH(s)+Ks)
\end{equation}
with
\begin{equation}\label{eq:eta_condition}
\eta = c_{C}\,c_{B}\,\cBplus\, R_{\max}(M_{0}) + \|K\|_{\mathrm{op}} c_{C} M_{0} + \eta_{0},
\quad \eta_{0}>0,
\end{equation}
yields the closed-loop sliding-variable dynamics
\begin{equation}\label{eq:sdot_closed}
\dot{s}(t) = -\eta\,\sgnH(s) - Ks + \tilde{w}(t),
\qquad \tilde{w}(t) = G\,R_{\mathrm{nc}}^{B}(x,t),\;
\|\tilde{w}\| \leq c_{C}\,c_{B}\,\cBplus\,R_{\max}.
\end{equation}
Consequently:
\begin{enumerate}
\item[\textup{(i)}] $\dot{V}_{s}\leq-\eta_{0}\|s\|$ with $V_{s}=\tfrac{1}{2}\|s\|^{2}$.
\item[\textup{(ii)}] $s(0) \neq 0$ implies $T^{*}\leq\|s(0)\|/\eta_{0}$.
\end{enumerate}
\end{theorem}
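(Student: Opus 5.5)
The proof substitutes the control law \eqref{eq:control} into the sliding-variable dynamics of Lemma~\ref{lem:sliding_dyn}, isolates the residual $\tilde w$, and then runs the standard reaching-phase Lyapunov argument.

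\textbf{Step 1 (closed-loop identity).} By Lemma~\ref{lem:sliding_dyn}, $\dot s = C\delta + CBu = C\delta + Gu$. Substituting \eqref{eq:control} and using $GG^{-1}=I$ (Lemma~\ref{lem:G_invertible}) gives
\[
\dot s = C\delta - G\Delta_{\mathrm{app}} - \eta\,\sgnH(s) - Ks .
\]
Next apply Corollary~\ref{cor:exact_rejection}(ii). The argument of $B_{\RR}^{+}$ in \eqref{eq:Delta_app_def} lies in $\mathrm{Im}(B)$, so $B\Delta_{\mathrm{app}} = \Pi_{\mathrm{Im}(B)}d\omega_{0} = \delta - \Pi_{\mathrm{Im}(B)}R_{0}$. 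This uses the shared selection $\xi^{*}$ (Remark~\ref{rem:shared_selection}). Hence $C\delta - G\Delta_{\mathrm{app}} = C\,\Pi_{\mathrm{Im}(B)}R_{0}$. By Proposition~\ref{prop:cmc_consequence} and $\Pi_{\mathrm{Im}(B)} = BB^{+}$ in the real picture, $C\,\Pi_{\mathrm{Im}(B)}R_{0} = CB\,\Phi^{-1}(B_{\RR}^{+}\Phi(R_{0})) = G R_{\mathrm{nc}}^{B}$. This is $\tilde w$, up to a sign; the sign is irrelevant since only $\|\tilde w\|$ is used. The bound $\|\tilde w\|\le c_C c_B \cBplus R_{\max}$ is then Proposition~\ref{prop:residual_bound}.

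\textbf{Step 2 (Lyapunov decrease).} Compute $\dot V_s = \innerR{s}{\dot s}$. Lemma~\ref{lem:sgn_properties}(2) gives $\innerR{s}{-\eta\,\sgnH(s)} = -\eta\|s\|$. Cauchy--Schwarz (Lemma~\ref{lem:CS}) gives $\innerR{s}{\tilde w}\le \|\tilde w\|\,\|s\|$. For the gain term, Assumption~\ref{ass:gain_K} makes $\innerR{s}{Ks} = \tfrac12\Phi(s)^{T}(\Phi(K)+\Phi(K)^{T})\Phi(s)\ge 0$, so $-\innerR{s}{Ks}\le 0$. Together these give
\[
\dot V_s \le -\bigl(\eta - c_Cc_B\cBplus R_{\max}\bigr)\|s\| \le -\eta_0\|s\|
\]
by \eqref{eq:eta_condition}; the extra $\|K\|_{\mathrm{op}}c_CM_0$ in $\eta$ is surplus margin here. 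This proves (i). On $\{s=0\}$ the inequality is interpreted in the Filippov sense via Lemma~\ref{lem:filippov_real}.

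\textbf{Step 3 (finite time).} From (i), $\tfrac{d}{dt}\|s\|\le -\eta_0$ wherever $s\ne 0$, because $\dot V_s = \|s\|\,\tfrac{d}{dt}\|s\|$. Integrating yields $\|s(t)\|\le \|s(0)\|-\eta_0 t$, hence $T^{*}\le \|s(0)\|/\eta_0$. This proves (ii).

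\textbf{Main obstacle.} The delicate point is Step~1, specifically the identity $C\Pi_{\mathrm{Im}(B)}R_0 = G R^{B}_{\mathrm{nc}}$ and the validity of the residual bound along the trajectory. Proposition~\ref{prop:residual_bound} requires $x(t)\in\mathcal{X}=\overline B_{M_0}$. I would secure this by invoking Phase~1 of Proposition~\ref{prop:forward_inv}, which holds because the tuple comes from a terminated run of Algorithm~\ref{alg:init} with the peaking step satisfied. The bound $\|\tilde w\|\le c_Cc_B\cBplus R_{\max}$ then holds on $[0,T^{*}]$.
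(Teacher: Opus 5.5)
Your proof is correct and follows the paper's argument step for step. It derives the closed-loop identity from Corollary~\ref{cor:exact_rejection}(ii) and the projection identity $\Pi_{\mathrm{Im}(B)}R_{0}=B\,R^{B}_{\mathrm{nc}}$, which gives exactly $+G\,R^{B}_{\mathrm{nc}}$, so your sign hedge is unnecessary. It then runs the same Lyapunov and reaching-time computations.

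Your extra concern that Proposition~\ref{prop:residual_bound} needs $x(t)\in\overline{B}_{M_{0}}$ addresses a point the paper's proof passes over silently. Note, however, that Phase~1 of Proposition~\ref{prop:forward_inv} works on the window $[0,\|s(0)\|/\eta_{0}]$ and bounds the switching term using the size of $s$. So the invariance of the ball and conclusions (i)--(ii) should be established together in a single first-exit-time argument, not by citing one to obtain the other.
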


\begin{proof}
\textbf{Closed-loop dynamics.}
By Corollary~\ref{cor:exact_rejection}(ii), the cohomological feedforward
satisfies
\[
B\Delta_{\mathrm{app}} = \Pi_{\mathrm{Im}(B)}d\omega_{0}(\ldots)
= \delta_{d} = \delta - \Pi_{\mathrm{Im}(B)}R_{0}(\ldots),
\]
where the second equality uses $\mathcal{J}=d\omega_{0}+R_{0}$
(eq.~\eqref{eq:R0_def}) and the definition $\delta=\Pi_{\mathrm{Im}(B)}\mathcal{J}$.
Substituting~\eqref{eq:control} into Lemma~\ref{lem:sliding_dyn}:
\begin{align*}
\dot{s} &= C\bigl(\delta + B(-\Delta_{\mathrm{app}} - G^{-1}(\eta\sgnH(s)+Ks))\bigr)\\
&= C\delta - CB\Delta_{\mathrm{app}} - CBG^{-1}(\eta\sgnH(s)+Ks)\\
&= C\delta - C(\delta - \Pi_{\mathrm{Im}(B)}R_{0}) - G\,G^{-1}(\eta\sgnH(s)+Ks)\\
&= C\Pi_{\mathrm{Im}(B)}R_{0} - \eta\sgnH(s) - Ks.
\end{align*}
Using $\Pi_{\mathrm{Im}(B)}R_{0} \in \mathrm{Im}(B)$ and the projection identity
$B B_{\RR}^{+}\Phi(R_{0}) = \Phi(\Pi_{\mathrm{Im}(B)}R_{0})$ (Lemma~\ref{lem:BtB_invertible}),
we obtain $\Pi_{\mathrm{Im}(B)}R_{0} = B\,R_{\mathrm{nc}}^{B}$, hence
$C\Pi_{\mathrm{Im}(B)}R_{0} = CB\,R_{\mathrm{nc}}^{B} = G\,R_{\mathrm{nc}}^{B}$.
Define $\tilde{w} := G\,R_{\mathrm{nc}}^{B} \in \HH^{m}$. By
Proposition~\ref{prop:residual_bound},
$\|\tilde{w}\| \leq c_{C}\,c_{B}\,\cBplus\,R_{\max}$.

\textbf{Lyapunov decrease.}
Let $V_{s} = \tfrac{1}{2}\innerR{s}{s}$. By Assumption~\ref{ass:gain_K},
$\Phi(K) + \Phi(K)^{T} \succ 0$, so $\langle s, Ks\rangle_{\RR}
= \tfrac{1}{2}\Phi(s)^{T}(\Phi(K)+\Phi(K)^{T})\Phi(s) \geq 0$ for all $s$.
By Cauchy--Schwarz, $\langle s,\tilde{w}\rangle_{\RR} \leq \|s\|\,\|\tilde{w}\|$.
Using $\innerR{s}{\sgnH(s)} = \|s\|$ (Lemma~\ref{lem:sgn_properties}),
we obtain:
\begin{align*}
\dot{V}_s &= -\eta\|s\| - \langle s, Ks\rangle_{\RR} + \langle s,\tilde{w}\rangle_{\RR}\\
&\leq -\eta\|s\| + \|s\|\,\|\tilde{w}\|
\leq (-\eta + c_{C}\,c_{B}\,\cBplus\,R_{\max})\|s\|.
\end{align*}
Substituting~\eqref{eq:eta_condition}:
\[
\dot{V}_s \leq -(\|K\|_{\mathrm{op}} c_{C} M_{0} + \eta_{0})\|s\|
\leq -\eta_{0}\|s\|.
\]
The term $\|K\|_{\mathrm{op}} c_{C} M_{0}$ in~\eqref{eq:eta_condition} provides
a robustness margin that ensures the reaching condition $\dot{V}_s \leq -\eta_{0}\|s\|$
remains valid if Assumption~\ref{ass:gain_K} is relaxed (i.e., if
$\langle s, Ks\rangle_\RR$ is only bounded as $|\langle s, Ks\rangle_\RR|
\leq \|K\|_{\mathrm{op}}\|s\|^{2} \leq \|K\|_{\mathrm{op}} c_{C} M_{0}\|s\|$ on
the reaching phase rather than guaranteed non-negative). Under
Assumption~\ref{ass:gain_K}, this term contributes additional decrease but is
not strictly required.

\textbf{Reaching time.}
Since $\dot{V}_{s} \leq -\eta_{0}\sqrt{2V_{s}}$, integration gives
$\sqrt{V_{s}(t)} \leq \sqrt{V_{s}(0)} - \tfrac{\eta_{0}}{\sqrt{2}} t$, hence
$T^{*} \leq \|s(0)\|/\eta_{0}$. The surface $\{s=0\}$ is forward invariant
under~\eqref{eq:sdot_closed} by Lemma~\ref{lem:filippov_real} since
$\|\tilde{w}\| \leq c_{C}\,c_{B}\,\cBplus\,R_{\max} < \eta$ (the strict
inequality follows from $\eta_{0} > 0$ in~\eqref{eq:eta_condition}).
\end{proof}

\begin{remark}[Implementability of $\Delta_{\mathrm{app}}$: Quantitative
Bounds for the Approximation Error]\label{rem:implementable}
The feedforward $\Delta_{\mathrm{app}}$ in~\eqref{eq:Delta_app_def} requires
knowledge of $\xi^{*}(x,t)$, the maximizing selection of the Jacobi defect
amplitude, which is in general non-trivial to compute online. We discuss
three implementation strategies with their respective error bounds.

\emph{(i) Worst-case fixed selection.} Replace $\xi^{*}(x,t)$ by a fixed
$\xi^{\mathrm{wc}} \in K \times K$ achieving the supremum of
$\|G_{\xi}(\cdot,t)\|$ over a numerical grid. The implementation error is
$\|\delta(x,t) - G_{\xi^{\mathrm{wc}}}(x,t)\|$, which by~\eqref{eq:E_bound}
of the proof of Lemma~\ref{lem:osl_projection} is bounded by
$2\deltabar_{\max}$. The closed-loop sliding dynamics
in~\eqref{eq:sdot_closed} then carry an additional disturbance term of
magnitude at most $c_{C}\,c_{B}(\cBplus R_{\max} + 2\deltabar_{\max})$, which
is absorbed by enlarging $\eta$ to
\[
\eta = c_{C}\,c_{B}(\cBplus R_{\max} + 2\deltabar_{\max}) + \|K\|_{\mathrm{op}}c_{C}M_{0} + \eta_{0}.
\]
All stability conclusions of Theorem~\ref{thm:control} remain valid under
this enlarged gain.

\emph{(ii) Grid-based sampling.} Sample the constraint set $K\times K$ on a
$\varepsilon_{\mathrm{grid}}$-net and compute $\xi^{\mathrm{grid}}(x,t)$ as
the discrete maximizer. By Lipschitz continuity of the Jacobiator in $\xi$
(via Definition~\ref{def:quasi_lie} and Assumption~\ref{ass:continuity}), the
error is bounded by
\[
\|\delta(x,t) - G_{\xi^{\mathrm{grid}}(x,t)}(x,t)\|
\leq L_{\mathcal{J}}\,\varepsilon_{\mathrm{grid}},
\]
where $L_{\mathcal{J}}$ is the Lipschitz constant of $\mathcal{J}$ in
$(\xi_{1},\xi_{2})$ on $K\times K$. For the test system,
$L_{\mathcal{J}} \leq 6 C_{2}(M_{\mathcal{X}}+\epsStar)^{2}$. Choosing
$\varepsilon_{\mathrm{grid}} = \cBplus R_{\max}/L_{\mathcal{J}}$ ensures the
grid error matches the existing residual margin and no enlargement of $\eta$
is needed.

\emph{(iii) Convex approximation.} If $\mathcal{J}(x, x+\xi_{1}, x+\xi_{2}, t)$
is a polynomial of degree at most 3 in $(\xi_{1},\xi_{2})$ (as in the
$\RR$-multilinear case), the supremum over the compact set $K\times K$ can
be computed via polynomial optimization techniques (e.g., the Lasserre
hierarchy~\cite{RockafellarWets1998}) with guaranteed convergence to within
$\varepsilon_{\mathrm{cvx}}$ in finite time. The approximation error is then
bounded by $\varepsilon_{\mathrm{cvx}}$, absorbed similarly to~(ii).

\emph{General principle.} The mathematical analysis of
Theorem~\ref{thm:control} and Proposition~\ref{prop:beta_stability} remains
valid as long as the implemented feedforward $\widetilde{\Delta}$ satisfies
\[
\|B\widetilde{\Delta} - \delta\| \leq \cBplus R_{\max} + \varepsilon_{\mathrm{impl}}
\]
pointwise for some additional implementation error $\varepsilon_{\mathrm{impl}}
\geq 0$, with the proviso that $\eta$ in~\eqref{eq:eta_condition} is enlarged
to $\eta + c_{C}\,c_{B}\,\varepsilon_{\mathrm{impl}}$ to absorb the
implementation residual.
\end{remark}

\section{Stability Analysis}\label{sec:stability}

The $\beta$-exponential stability framework used in this section is
classical: $V(x) = x^{T}Px$ with $A^{T}P + PA \prec -\beta P$ goes back to
Lyapunov, and extensions to perturbed systems with one-sided Lipschitz
(OSL) nonlinearities are by now standard in the literature
\cite{Khalil2002,Abbaszadeh2010,Zhang2015}. Theorem~\ref{thm:beta_kyb} of
Appendix~\ref{app:beta} adapts the machinery to the present setting via
an LMI that absorbs the GOSL cross-term exactly; the underlying technique
(Young's inequality, Lyapunov decay, Gronwall) is textbook material.

What is specific to our setting is the source of the inputs supplied to
this machinery. The GOSL constants
$(\rho_{\delta}^{\mathrm{tight}}, \ell_{\delta}^{\mathrm{tight}})$ are
not postulated but derived from the cohomological structure of $\LL$
(Lemma~\ref{lem:osl_projection}). The disturbance $w_{r}(t)$ in the
reduced dynamics~\eqref{eq:reduced_dyn} is the back-projection of the
non-commutative residual $G R_{\mathrm{nc}}^{B}$, controlled by
$\Cnc$ (bounded either by $12C_{2}$ via the structural identity
$d\omega_{0}=\Pi_{\mathrm{cone}}(\psi)$, or by $3\|\omega_{0}\|_{\varepsilon}+6C_{2}$
via the bookkeeping decomposition; see Remark~\ref{rem:Cnc_decomposition});
the problem-dependent bound
$\|\omega_{0}\|_{\varepsilon}\leq 6C_{0}C_{2}$ from~\cite[Lemma~4.10]{Athmouni2026}
gives the relevant scaling but there is no universal numerical constant.
The effective trilinear constant
$\rho_{\delta}^{\mathrm{eff}}
 = \rho_{\delta}^{\mathrm{tight}} + 2L_{r}M_{\mathcal{X}}$
(Remark~\ref{rem:residual_absorbed}) absorbs the Lipschitz contribution
of $r$ in a way specific to the homogeneous quasi-Lie structure. The
iterative LMI scheme of Algorithm~\ref{alg:lmi} addresses the
self-consistency between gain and domain size by halving $M_{0}$ until
the LMI becomes feasible; this is the only algorithmic novelty, the LMI
solver itself being standard.

\subsection{Reduced Dynamics on the Sliding Manifold}

On $\{s=0\}$:
\begin{equation}\label{eq:reduced_dyn}
\dot{x}_{r} = A_{s}x_{r} + r(x_{r}) + \delta(x_{r},t) + w_{r}(t),
\quad \|w_{r}(t)\| \leq c_{C}\,c_{B}\,\cBplus\,R_{\max},
\end{equation}
where $r(x_{r}) = (A_{\mathrm{nom}}(x_{r})-A_{s})x_{r}$, and $w_r(t) \in \HH^n$
is the back-projection of the sliding residual $\tilde{w} = G\,R_{\mathrm{nc}}^{B}$
to the state space via the equivalent-control reduction (see Lemma~\ref{lem:sliding_dyn}
and Theorem~\ref{thm:control}).

\begin{remark}[Decomposition of Perturbations in Reduced Dynamics]
\label{rem:decomp_reduced}
In the reduced dynamics~\eqref{eq:reduced_dyn}, two perturbation terms appear:
$\delta(x_r, t)$ bounded by $\deltabar_{\max}$ satisfying the GOSL condition,
and $w_r(t)$ bounded by $c_C\,c_{B}\,\cBplus\, R_{\max}$, originating from the
non-commutative residual via the equivalent dynamics~\eqref{eq:sdot_closed}.
The formula $C_{\infty}(\mu)$ below is structurally identical to $\Gamma(\mu)$
in Theorem~\ref{thm:beta_kyb} with $w_{\max} \leftarrow c_C\,c_{B}\,\cBplus\, R_{\max}$.
\end{remark}

\begin{assumption}[Lipschitz Nonlinear Residual]\label{ass:residual}
$\|r(x_{r})\| \leq L_{r}\|x_{r}\|^{2}$ for some $L_{r} \geq 0$.
\end{assumption}

\begin{remark}[Absorbing the Residual]\label{rem:residual_absorbed}
The non-linear residual $r$ satisfies $\|r(x_r) - r(z)\| \leq 2 L_r M_\mathcal{X} \|x_r - z\|$
on $\overline{B}_{M_\mathcal{X}}$ (by Assumption~\ref{ass:residual} and the mean-value
estimate for a quadratic field), so it contributes a Lipschitz term to the
GOSL bound of $\delta + r$. The effective trilinear constant in the LMI is
therefore:
\[
\rho_{\delta}^{\mathrm{eff}} := \rho_{\delta}^{\mathrm{tight}} + 2 L_{r} M_{\mathcal{X}},
\]
where $\rho_{\delta}^{\mathrm{tight}} = 2\rho_{\delta}$ is the tight constant
from Lemma~\ref{lem:osl_projection} (Step~6). For the test system ($L_{r} = 0$),
$\rho_{\delta}^{\mathrm{eff}} = \rho_{\delta}^{\mathrm{tight}}$.
\end{remark}

\begin{assumption}[Strict LMI Feasibility]\label{ass:lmi_feasibility}
There exists $\mu_{\infty} \in [1, \mu_{0}]$ such that
LMI~\eqref{eq:lmi_augmented} is strictly feasible for all
$\mu_{k} \in [\mu_{\infty}, \mu_{0}]$, and the map
$\mu \mapsto \lambda_{\max}(P^{*}(\mu))$ is non-increasing on this interval.
Under this assumption, Algorithm~\ref{alg:lmi} converges to a fixed point
$(P^{*}, \beta^{*}, \mu_{\mathrm{LMI}})$ with $\mu_{\mathrm{LMI}} = \lambda_{\max}(P^{*})$,
$\beta^{*} > 0$, and $\mu_{\mathrm{LMI}} < \rho_{\delta}^{\mathrm{eff}}$.

The monotonicity condition is not analytically guaranteed for general systems;
Lemma~\ref{lem:normal_As_monotonicity} below provides a sufficient verifiable
condition (normal $A_{s}$). The test system satisfies it trivially (degenerate
case $P^{*}=I$). Proving this property in full generality remains an open problem.
\end{assumption}

\begin{lemma}[Sufficient Condition for LMI Monotonicity: Normal $A_{s}$]
\label{lem:normal_As_monotonicity}
Suppose $\Phi(A_{s}) \in \RR^{4n\times 4n}$ is \emph{normal}, i.e.,
$\Phi(A_{s})^{T}\Phi(A_{s}) = \Phi(A_{s})\Phi(A_{s})^{T}$. Equivalently,
$\Phi(A_{s})$ admits an orthonormal eigenbasis with eigenvalues
$\{\lambda_{i}\}_{i=1}^{4n} \subset \CC$ satisfying $\Real(\lambda_{i}) < 0$
(Hurwitz, by Assumption~\ref{ass:hurwitz}). Then LMI~\eqref{eq:lmi_augmented}
admits a closed-form solution
\[
P^{*}(\mu) = \tau(\mu) I,
\qquad
\beta^{*}(\mu) = -\frac{2\alpha_{s} - \rho_{\delta}^{\mathrm{eff}}\mu}{\tau(\mu)},
\]
where $\alpha_{s} = -\max_{i}\Real(\lambda_{i}) > 0$ and $\tau(\mu) \in [1, \mu]$
is any feasible scalar (e.g., $\tau(\mu) = 1$). In particular,
$\mu \mapsto \lambda_{\max}(P^{*}(\mu)) = \tau(\mu)$ can be chosen constant,
so monotonicity holds trivially.
\end{lemma}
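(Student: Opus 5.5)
The plan is to verify the LMI~\eqref{eq:lmi_augmented} directly with the scalar ansatz $P=\tau I$, using only the symmetric part of $\Phi(A_{s})$. With $P=\tau I$, the Lyapunov block $\Phi(A_{s})^{T}P+P\Phi(A_{s})$ becomes $\tau\bigl(\Phi(A_{s})^{T}+\Phi(A_{s})\bigr)$. Its largest eigenvalue is $2\tau\,\lambda_{\max}\bigl(\tfrac12(\Phi(A_{s})+\Phi(A_{s})^{T})\bigr)$.

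The first step is to show that normality forces this symmetric part to have largest eigenvalue exactly $-\alpha_{s}$. Write $\Phi(A_{s})=U\Lambda U^{*}$ with $U$ unitary in $\CC^{4n\times4n}$. Then $\tfrac12(\Phi(A_{s})+\Phi(A_{s})^{T})=U\,\Real(\Lambda)\,U^{*}$, because for a real normal matrix $\Phi(A_{s})^{T}=\Phi(A_{s})^{*}=U\bar\Lambda U^{*}$. Its spectrum is therefore $\{\Real\lambda_{i}\}$, and the numerical abscissa coincides with the spectral abscissa $-\alpha_{s}$. This is the one point where normality is essential: for non-normal $A_{s}$ the numerical abscissa can exceed $-\alpha_{s}$, and the scalar ansatz fails.

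Next I would insert the GOSL cross-term of the LMI. The coefficient is $\mu_{k}(2\rho_{\delta}^{\mathrm{eff}}+\mu_{\mathrm Y})+(\rho_{\delta}^{\mathrm{eff}}-\mu_{\mathrm Y})$, and in the form used in Algorithm~\ref{alg:init} it reduces to the contribution $\rho_{\delta}^{\mathrm{eff}}\mu$ times $I$. Since every block is then a multiple of $I$, the LMI collapses to a scalar inequality of the form
\[
-2\alpha_{s}\tau+\rho_{\delta}^{\mathrm{eff}}\mu+\beta\tau\le 0 .
\]
Solving for $\beta$ at equality gives the stated $\beta^{*}(\mu)$, up to the sign convention in the statement. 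I expect to have to reconcile that sign, reading $\beta^{*}=(2\alpha_{s}-\rho_{\delta}^{\mathrm{eff}}\mu)/\tau$ as the positive decay rate under the feasibility condition $\rho_{\delta}^{\mathrm{eff}}\mu<2\alpha_{s}$ checked in Step~5 of Algorithm~\ref{alg:init}. The constraint $I\preceq P\preceq\mu I$, if present in~\eqref{eq:lmi_augmented}, is then satisfied by any $\tau\in[1,\mu]$, in particular $\tau=1$.

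Finally, with $\tau(\mu)\equiv1$ we get $\lambda_{\max}(P^{*}(\mu))=1$, which is constant and hence non-increasing. This verifies the monotonicity requirement of Assumption~\ref{ass:lmi_feasibility}.

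The main obstacle is bookkeeping rather than analysis. The exact form of~\eqref{eq:lmi_augmented} appears only later in the paper, so I must check that the Young-parameter terms are all scalar multiples of the identity once $P=\tau I$. Only then is the reduction to a scalar inequality legitimate, and this is where I would look first.
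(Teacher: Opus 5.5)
Your proposal follows the paper's proof. It uses the scalar ansatz $P=\tau I$ and the bound $\Phi(A_{s})^{T}+\Phi(A_{s})\preceq-2\alpha_{s}I$ coming from normality; your derivation via $\tfrac12(\Phi(A_{s})+\Phi(A_{s})^{T})=U\,\Real(\Lambda)\,U^{*}$ is more direct than the paper's appeal to $\Phi(A_{s})^{T}\Phi(A_{s})$ and Assumption~\ref{ass:hurwitz}. From there it reduces to a scalar inequality with the simplified coefficient $\rho_{\delta}^{\mathrm{eff}}\mu$ of Remark~\ref{rem:lmi_simplified}, and takes $\tau\equiv 1$.

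There are two small bookkeeping caveats.
\begin{enumerate}
\item The coefficient $\Theta(\mu_{k},\mu_{\mathrm{Y}})$ of~\eqref{eq:lmi_augmented} does not reduce to $\rho_{\delta}^{\mathrm{eff}}\mu$. As in the paper, the rigorous case follows by substituting $\Theta$, which is still a multiple of $I$.
\item Solving your scalar inequality gives $\beta=2\alpha_{s}-\rho_{\delta}^{\mathrm{eff}}\mu/\tau$. This matches the sign-corrected stated $\beta^{*}$ only at $\tau=1$; for $\tau>1$ the stated value is merely feasible, not extremal. This is harmless, since only $\tau=1$ is needed.
\end{enumerate}
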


\begin{proof}
Since $\Phi(A_{s})$ is normal, $\Phi(A_{s})^{T}\Phi(A_{s})$ is diagonalizable
by the same orthonormal eigenbasis as $\Phi(A_{s})$; under
Assumption~\ref{ass:hurwitz}, $(\Phi(A_{s})^{T} + \Phi(A_{s}))/2$ has spectral
abscissa $-\alpha_{s}$. We work with the simplified form of the LMI
recorded in Remark~\ref{rem:lmi_simplified} (which is a conservative
relaxation of~\eqref{eq:lmi_augmented} obtained by omitting the
Young-parameter contribution $\mu_{\mathrm{Y}}$); setting $P = \tau I$:
\[
\tau(\Phi(A_{s})^{T} + \Phi(A_{s})) + \rho_{\delta}^{\mathrm{eff}}\mu I \prec -\beta\tau I,
\]
i.e., $\Phi(A_{s})^{T}+\Phi(A_{s}) \prec -(\beta + \rho_{\delta}^{\mathrm{eff}}\mu/\tau)I$.
Using $\Phi(A_{s})^{T}+\Phi(A_{s}) \preceq -2\alpha_{s} I$
(from the spectral abscissa bound), the LMI is strictly feasible whenever
$\beta + \rho_{\delta}^{\mathrm{eff}}\mu/\tau < 2\alpha_{s}$. For $\tau = 1$,
this gives $\beta < 2\alpha_{s} - \rho_{\delta}^{\mathrm{eff}}\mu$, well-defined
and positive iff $\mu < 2\alpha_{s}/\rho_{\delta}^{\mathrm{eff}}$. Since
$P^{*}(\mu) = I$ is a constant function of $\mu$ in this regime,
$\lambda_{\max}(P^{*}(\mu)) = 1$ is trivially non-increasing.
The analogous calculation for the rigorous
form~\eqref{eq:lmi_augmented} replaces
$\rho_{\delta}^{\mathrm{eff}}\mu$ with $\Theta(\mu_{k},\mu_{\mathrm{Y}})$,
preserving the closed-form $P^{*}(\mu)=\tau I$ and the monotonicity
conclusion.
\end{proof}

\begin{remark}[Scope of the normality condition and verification for general $A_{s}$]
\label{rem:normal_As_scope}
The normality condition $\Phi(A_{s})^{T}\Phi(A_{s}) = \Phi(A_{s})\Phi(A_{s})^{T}$
is automatically satisfied when $A_{s}$ is itself a real diagonal matrix
(the test system in Section~\ref{sec:sim} has $A_{s} = -\alpha_{s}I$, hence
trivially normal). For general non-normal $A_{s}$, the monotonicity
condition of Assumption~\ref{ass:lmi_feasibility} must be checked
case-by-case. The following observations make the numerical verification
\emph{robust}:

\begin{enumerate}
\item[\textup{(i)}] \textbf{Continuity of $P^{*}(\mu)$.} The map
$\mu \mapsto P^{*}(\mu)$ defined as the solution of the LMI parametrized by
$\mu$ is locally Lipschitz continuous on the open set of strict feasibility,
by the implicit function theorem applied to the KKT conditions of the LMI
(this is a standard consequence of the smoothness of the SDP solution
map; see~\cite{Khalil2002} for background on parameter-dependent Lyapunov
inequalities and the regularity of their solutions). Consequently,
$\mu \mapsto \lambda_{\max}(P^{*}(\mu))$ is continuous, and monotonicity can be
verified on a finite $\varepsilon$-grid with error bounded by
$L_{P}\,\varepsilon$ (where $L_{P}$ is the local Lipschitz constant of
$\lambda_{\max}(P^{*}(\cdot))$).
\item[\textup{(ii)}] \textbf{Practical verification protocol.} For a given
non-normal $A_{s}$:
\begin{enumerate}
\item[\textup{(a)}] Choose a grid $\{\mu_{0} > \mu_{1} > \cdots > \mu_{N}\}$ on the
interval $[\mu_{\infty}, \mu_{0}]$ with step $\varepsilon$.
\item[\textup{(b)}] Solve LMI~\eqref{eq:lmi_augmented} at each $\mu_{i}$ to obtain
$P^{*}(\mu_{i})$ and $\lambda_{\max}(P^{*}(\mu_{i}))$.
\item[\textup{(c)}] Check that the discrete sequence
$\lambda_{\max}(P^{*}(\mu_{i}))$ is non-increasing in $i$.
\item[\textup{(d)}] By (i), if the discrete sequence is non-increasing with margin
$\geq L_{P}\,\varepsilon$, then the continuous map is non-increasing on
the entire interval.
\end{enumerate}
\item[\textup{(iii)}] \textbf{Sufficient algebraic conditions.} Beyond
normality, monotonicity holds in the following cases:
\begin{itemize}
\item $\Phi(A_{s})$ commutes with $\Phi(A_{s})^{T}$ (normality);
\item $A_{s}$ is symmetric stable: $A_{s} + A_{s}^{T} \prec 0$ on $\HH^{n}$;
\item $\Phi(A_{s})$ admits a Schur decomposition $\Phi(A_{s}) = Q^{T}TQ$ with
$T$ upper-triangular and $\|T - \mathrm{diag}(T)\|_{F}$ sufficiently small
(perturbation of a diagonal stable matrix).
\end{itemize}
\end{enumerate}
For the broader class of non-normal $A_{s}$ outside these cases,
Assumption~\ref{ass:lmi_feasibility} is verified numerically via the
protocol of~(ii). A general analytical proof of monotonicity remains an open
problem.
\end{remark}

\begin{algorithm}[H]
\caption{Iterative LMI for $\beta$-Exponential Stability}
\label{alg:lmi}
\begin{algorithmic}[1]
\REQUIRE $\Phi(A_{s})$, $\rho_{\delta}^{\mathrm{eff}}$, $\texttt{tol}>0$, $N_{\max}$, $\epsilon_{\mathrm{LMI}}>0$.
\medskip
\STATE \textbf{Notation:} two distinct parameters appear:
\begin{itemize}
  \item $\mu_{k} \geq 1$: \emph{LMI scaling parameter}, encoding the
    constraint $P \preceq \mu_{k}I$, updated at iteration $k$ via
    $\mu_{k+1} := \lambda_{\max}(P^{*})$.
  \item $\mu_{\mathrm{Y}} \in (0, \rho_{\delta}^{\mathrm{eff}})$:
    \emph{Young parameter}, used in the Young inequality
    of Theorem~\ref{thm:beta_kyb}, chosen optimally to minimize $C_{\infty}$
    (e.g., $\mu_{\mathrm{Y}} = \mu_{\mathrm{opt}}$
    of~\eqref{eq:mu_opt_explicit}).
\end{itemize}
\medskip
\STATE Initialize $\mu_{0}\leftarrow\lambda_{\max}(\Phi(A_{s})^{T}\Phi(A_{s}))^{1/2}+1$;
$k\leftarrow 0$.
\STATE Set $\mu_{\mathrm{Y}} \leftarrow \rho_{\delta}^{\mathrm{eff}}/2$
or compute $\mu_{\mathrm{opt}}$ from~\eqref{eq:mu_opt_explicit}.
\REPEAT
\STATE Solve for $(P,\beta)$ the rigorous LMI:
\begin{equation}\label{eq:lmi_augmented}
\Phi(A_{s})^{T}P+P\Phi(A_{s})+\Theta(\mu_{k},\mu_{\mathrm{Y}})\,I
\prec-\beta P,\quad I\preceq P\preceq\mu_{k}I,
\end{equation}
where
$\Theta(\mu_{k},\mu_{\mathrm{Y}}) := \mu_{k}(2\rho_{\delta}^{\mathrm{eff}}+\mu_{\mathrm{Y}}) + (\rho_{\delta}^{\mathrm{eff}}-\mu_{\mathrm{Y}})$
is the rigorous coefficient ensuring exact cancellation of the cross-term
in $\|x_{r}\|^{2}$ from the GOSL bound (Theorem~\ref{thm:beta_kyb}).
\IF{infeasible} \STATE Reduce $M_{0}$ per Remark~\ref{rem:outer_loop_limitation}
and restart. \ENDIF
\STATE $\mu_{k+1}\leftarrow\lambda_{\max}(P^{*})$; $\beta_{k+1}\leftarrow\beta^{*}$;
$k\leftarrow k+1$.
\UNTIL{$|\mu_{k}-\mu_{k-1}|/\mu_{k-1}<\texttt{tol}$ or $k=N_{\max}$}
\STATE \textbf{Return} $P^{*},\beta^{*}$; set $\mu_{\mathrm{LMI}}\leftarrow\lambda_{\max}(P^{*})$
and $\mu_{\mathrm{Y}}$ used.
\end{algorithmic}
\end{algorithm}

\begin{remark}[Conservative simplification of the LMI]\label{rem:lmi_simplified}
A frequently-used simplified form of LMI~\eqref{eq:lmi_augmented} is
\[
\Phi(A_{s})^{T}P + P\Phi(A_{s}) + \rho_{\delta}^{\mathrm{eff}}\mu_{k}I \prec -\beta P,
\]
which omits the Young-parameter contribution $\mu_{\mathrm{Y}}$. This form is
a conservative approximation valid only when the residual cross-term
$\mu_{*}\rho\|x\|^{2}$ is dominated by the strict inequality margin (see
Remark~\ref{rem:app_reduced}). For the test system (Section~\ref{sec:sim}),
the two forms yield $\beta^{*}$ differing by less than 3\%
(1.94 vs 1.98), so the simplified form is acceptable for numerical
illustration. For rigorous applications, the form~\eqref{eq:lmi_augmented}
should be used.
\end{remark}

\begin{remark}[Convergence of the LMI Algorithm]\label{rem:alg_convergence_detailed}
If the algorithm does not converge after $N_{\max}$ iterations, the last
iteration provides a conservative but valid certificate as long as the LMI
is satisfied with the current $\mu_k$. Verify a posteriori that
$\mu_{\mathrm{LMI}} < \rho_{\delta}^{\mathrm{eff}}$.
\end{remark}

\begin{proposition}[$\beta$-Exponential Stability]\label{prop:beta_stability}
Let Assumption~\ref{ass:lmi_feasibility} hold and Algorithm~\ref{alg:lmi}
converge to $(P^{*},\beta^{*},\mu_{\mathrm{LMI}})$ with $\beta^{*}>0$,
$\mu_{\mathrm{LMI}} = \lambda_{\max}(P^{*})$, and $\mu_{\mathrm{LMI}} < \rho_{\delta}^{\mathrm{eff}}$.
Under Assumptions~\ref{ass:continuity},~\ref{ass:cohom},~\ref{ass:hurwitz},
and~\ref{ass:residual}, trajectories of~\eqref{eq:reduced_dyn} satisfy:
\begin{equation}\label{eq:beta_stab}
V_{r}(x_{r}(t))\leq e^{-\beta^{*}t}V_{r}(x_{r}(0))
+\frac{C_{\infty}(\mu_{\mathrm{LMI}})}{\beta^{*}},
\end{equation}
where $V_{r}=x_{r}\trans P^{*}x_{r}$ and (with $\mu_{*} = \lambda_{\max}(P^{*}) = \mu_{\mathrm{LMI}}$):
\begin{equation}\label{eq:Cinfty_def}
C_{\infty}(\mu)
= \frac{\mu_{*}^{2}(c_{C}\,c_{B}\,\cBplus\,R_{\max})^{2}}{\rho_{\delta}^{\mathrm{eff}}-\mu}
+ \frac{\mu_{*}(\ell_{\delta}^{\mathrm{tight}})^{2}}{\mu},
\quad \mu \in (0, \rho_{\delta}^{\mathrm{eff}}).
\end{equation}
\end{proposition}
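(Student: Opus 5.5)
The plan is a standard quadratic-Lyapunov argument in the real representation. Take $V_{r}(x_{r})=x_{r}\trans P^{*}x_{r}$ along trajectories of~\eqref{eq:reduced_dyn} (Filippov solutions, absolutely continuous). By Lemma~\ref{lem:quadratic} and Proposition~\ref{prop:real_rep_ops}, $V_{r}$ is real, and $\dot V_{r}=2\innerR{P^{*}x_{r}}{\dot x_{r}}$ is computed on $\RR^{4n}$ with the symmetric matrix $\Phi(P^{*})$. Substituting the reduced dynamics gives
\[
\dot V_{r}=\Phi(x_{r})^{T}\bigl(\Phi(A_{s})^{T}\Phi(P^{*})+\Phi(P^{*})\Phi(A_{s})\bigr)\Phi(x_{r})
+2\innerR{P^{*}x_{r}}{r(x_{r})+\delta(x_{r},t)}+2\innerR{P^{*}x_{r}}{w_{r}}.
\]
The linear part will be controlled by the LMI~\eqref{eq:lmi_augmented}.

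For the nonlinear part I use the GOSL bound of Lemma~\ref{lem:osl_projection} for $\delta+r$ with $z=0$. Remark~\ref{rem:residual_absorbed} supplies the effective constant $\rho_{\delta}^{\mathrm{eff}}$, and $r(0)=0$ by Assumption~\ref{ass:residual}. The resulting inequality is
\[
\innerR{\delta(x_{r},t)+r(x_{r})}{x_{r}}\le\rho_{\delta}^{\mathrm{eff}}\|x_{r}\|^{2}+\ell_{\delta}^{\mathrm{tight}}\|x_{r}\|+\innerR{\delta(0,t)}{x_{r}},
\]
where the $\delta(0,t)$ term is bounded by $\deltabar_{\max}\|x_{r}\|$ and can be absorbed into the affine term.

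The weight $P^{*}$ in the cross term is not the identity, so a pure GOSL bound does not apply directly. I would split $P^{*}x_{r}=x_{r}+(P^{*}-I)x_{r}$. The first piece uses GOSL; the second is bounded by Cauchy--Schwarz (Lemma~\ref{lem:CS}) with $\|P^{*}-I\|\le\mu_{*}$ from $I\preceq P^{*}\preceq\mu_{k}I$ and Lemma~\ref{lem:eigenvalue_bound}, using the norm growth of $\delta+r$. Both affine terms, $2\mu_{*}\ell_{\delta}^{\mathrm{tight}}\|x_{r}\|$ and $2\mu_{*}c_{C}c_{B}\cBplus R_{\max}\|x_{r}\|$, are then handled by Young's inequality with parameters $\mu$ and $\rho_{\delta}^{\mathrm{eff}}-\mu$. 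This produces exactly the two quotients in~\eqref{eq:Cinfty_def}, together with quadratic leftovers $\mu\|x_{r}\|^{2}$ and $(\rho_{\delta}^{\mathrm{eff}}-\mu)\|x_{r}\|^{2}$.

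All $\|x_{r}\|^{2}$ coefficients should sum to at most $\Theta(\mu_{k},\mu_{\mathrm{Y}})=\mu_{k}(2\rho_{\delta}^{\mathrm{eff}}+\mu_{\mathrm{Y}})+(\rho_{\delta}^{\mathrm{eff}}-\mu_{\mathrm{Y}})$, identifying $\mu_{\mathrm{Y}}=\mu$. The LMI then gives $\dot V_{r}\le-\beta^{*}V_{r}+C_{\infty}(\mu)$, and Gronwall (comparison lemma) integrated from $0$ yields~\eqref{eq:beta_stab}, using $1-e^{-\beta^{*}t}\le1$.

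\textbf{Main obstacle: the coefficient bookkeeping.} The Young splitting must produce precisely $\Theta$, so that the cross-term cancellation is exact and matches both the $\mu_{*}^{2}$ in the first quotient and the $\mu_{*}$ in the second. I would first try to fix the Young weights as $\mu_{*}/(\rho_{\delta}^{\mathrm{eff}}-\mu)$ and $1/\mu$, then compare to $\Theta$, adjusting the split $P^{*}=I+(P^{*}-I)$ if necessary. A secondary point is that GOSL and the bounds on $\|\delta\|$ require $x_{r}\in\mathcal{X}$. This is ensured on the operating ball by Proposition~\ref{prop:forward_inv}, via a standard continuation argument: the estimate holds up to the first exit time, which then cannot occur.
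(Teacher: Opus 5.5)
Your overall architecture is the paper's: a quadratic Lyapunov function in $\RR^{4n}$, LMI~\eqref{eq:lmi_augmented}, two Young inequalities, then Gronwall. The paper's proof simply verifies the hypotheses of Theorem~\ref{thm:beta_kyb} and runs that computation.

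The gap is in the step you flag yourself, the cross term $2\innerR{P^{*}x_{r}}{g}$ with $g=\delta+r$; as written it does not close. With $\|P^{*}-I\|\le\mu_{*}$, your two pieces give $2(1+\mu_{*})\rho_{\delta}^{\mathrm{eff}}\|x_{r}\|^{2}$. That exceeds the $2\mu_{*}\rho_{\delta}^{\mathrm{eff}}$ budgeted in $\Theta$ by $2\rho_{\delta}^{\mathrm{eff}}$. The only remaining slack in $\Theta$ is $\mu_{*}\mu_{\mathrm{Y}}$, which is already needed for the Young step and is smaller than $2\rho_{\delta}^{\mathrm{eff}}$ whenever $\mu_{*}\le 2$.

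With the correct bound $\|P^{*}-I\|\le\mu_{*}-1$ (from $I\preceq P^{*}\preceq\mu_{*}I$) the split closes. But it then essentially reproduces the paper's one-line estimate
\[
2\innerR{P^{*}x_{r}}{g}\le 2\|P^{*}x_{r}\|\,\|g\|\le 2\mu_{*}\|x_{r}\|\bigl(\rho_{\delta}^{\mathrm{eff}}\|x_{r}\|+\ell_{\delta}^{\mathrm{tight}}\bigr),
\]
so Lemma~\ref{lem:osl_projection} buys nothing here. The paper states explicitly that Theorem~\ref{thm:beta_kyb} uses only the growth bound, never the GOSL inequality.

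That growth bound is the real input, and you leave it unspecified. It is $\|g(x,t)\|\le\rho_{\delta}^{\mathrm{eff}}\|x\|$, which the paper obtains from three facts:
\begin{itemize}
\item $\delta(0,t)=0$, since $\mathcal{J}_{t}(0,\cdot,\cdot)=0$ by trilinearity;
\item $\|\delta(x,t)\|\le 6C_{2}(M_{\mathcal{X}}+\epsStar)^{2}\|x\|\le\rho_{\delta}^{\mathrm{tight}}\|x\|$;
\item $\|r(x)\|\le L_{r}M_{\mathcal{X}}\|x\|$.
\end{itemize}

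Three smaller corrections follow.
\begin{itemize}
\item Use $\delta(0,t)=0$ in your GOSL step as well. Absorbing $\innerR{\delta(0,t)}{x_{r}}$ as $\deltabar_{\max}\|x_{r}\|$ changes the affine constant, and you would no longer recover $C_{\infty}$.
\item The Young weight on $2\mu_{*}\ell_{\delta}^{\mathrm{tight}}\|x_{r}\|$ must be $\mu_{*}\mu$, leaving $\mu_{*}\mu\|x_{r}\|^{2}$ rather than $\mu\|x_{r}\|^{2}$. This is exactly the $\mu_{*}\mu_{\mathrm{Y}}$ term in $\Theta$. Together with $(\rho_{\delta}^{\mathrm{eff}}-\mu)\|x_{r}\|^{2}$ from the disturbance term, the quadratic terms then match $\Theta$ exactly.
\item To obtain $C_{\infty}(\mu_{\mathrm{LMI}})$ as stated, the Young parameter must be $\mu_{\mathrm{LMI}}$, as in the paper. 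This is where the hypothesis $\mu_{\mathrm{LMI}}<\rho_{\delta}^{\mathrm{eff}}$ enters.
\end{itemize}

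Your continuation argument for keeping $x_{r}\in\mathcal{X}$ also fails, for two reasons.
\begin{itemize}
\item It is circular: Phase~2 of the proof of Proposition~\ref{prop:forward_inv} invokes the very estimate~\eqref{eq:beta_stab} you are proving.
\item A first-exit argument cannot close. Estimate~\eqref{eq:beta_stab} only confines $x_{r}$ to $\overline{B}_{R}$ with $R^{2}=\kappa(P^{*})M_{0}^{2}+C_{\infty}(\mu_{\mathrm{LMI}})/(\lambda_{\min}(P^{*})\beta^{*})$, which is strictly larger than $M_{0}^{2}$ whenever $C_{\infty}>0$. Yet $\rho_{\delta}^{\mathrm{eff}}$, $\ell_{\delta}^{\mathrm{tight}}$ and $R_{\max}$ were computed with $M_{\mathcal{X}}=M_{0}$ (Lemma~\ref{lem:Cinfty_scaling}).
\end{itemize}
The paper's proof uses the bounds on $\overline{B}_{M_{\mathcal{X}}}$ without addressing this. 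What the argument actually delivers is~\eqref{eq:beta_stab} on any interval during which $x_{r}(t)\in\mathcal{X}$. An unconditional statement would require recomputing the constants on a ball containing $\overline{B}_{R}$, which is a fixed-point condition in $M_{\mathcal{X}}$.
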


\begin{proof}
Apply Theorem~\ref{thm:beta_kyb} (Appendix~\ref{app:beta}) to the real
representation of~\eqref{eq:reduced_dyn} via $\Phi$. With the substitutions
\[
A = \Phi(A_{s}),\quad
g(\tilde{x},t) = \Phi(\delta(\Phi^{-1}(\tilde{x}),t)) + \Phi(r(\Phi^{-1}(\tilde{x}))),\quad
w(t) = \Phi(w_{r}(t)),
\]
\[
\rho = \rho_{\delta}^{\mathrm{eff}},\quad
\ell_{g} = \ell_{\delta}^{\mathrm{tight}},\quad
w_{\max} = c_{C}\,c_{B}\,\cBplus R_{\max},\quad
\mu = \mu_{\mathrm{LMI}},
\]
the perturbation $g$ satisfies the GOSL bound~\eqref{eq:gosl_g} by combining
Lemma~\ref{lem:osl_projection} (for $\delta$ with constants
$(\rho_{\delta}^{\mathrm{tight}}, \ell_{\delta}^{\mathrm{tight}})$) and
Remark~\ref{rem:residual_absorbed} (which absorbs the Lipschitz contribution
of $r$ via $\rho_{\delta}^{\mathrm{eff}} = \rho_{\delta}^{\mathrm{tight}} + 2 L_r M_{\mathcal{X}}$);
the linear-Hurwitz hypothesis on $A_{s}$ (Assumption~\ref{ass:hurwitz})
ensures the operator $A$ is Hurwitz.

The dissipativity-type hypothesis of Theorem~\ref{thm:beta_kyb}
($\|g(x,t)-g(0,t)\| \leq \rho \|x\|$ and $\|g(0,t)\| \leq \ell_{g}$) is
verified as follows. By bilinearity of $\mathcal{J}$ and $\LL$:
$\mathcal{J}(0,y,z)=0$ for all $y,z$, hence $\delta(0,t)=0$; similarly
$r(0)=0$ from Assumption~\ref{ass:residual} (which gives $\|r(0)\|\leq L_{r}\cdot 0=0$).
Therefore $g(0,t)=0$ and the pointwise bound $\|g(0,t)\|\leq\ell_{g}$
holds trivially. For the dissipativity bound, $\|\delta(x,t)\|\leq
6C_{2}\|x\|(M_{\mathcal{X}}+\epsStar)^{2}$ from
Definition~\ref{def:jacobi_defect}, and $\|r(x)\|\leq L_{r}\|x\|^{2}\leq L_{r}M_{\mathcal{X}}\|x\|$
on $\overline{B}_{M_{\mathcal{X}}}$, so
$\|g(x,t)\|\leq[6C_{2}(M_{\mathcal{X}}+\epsStar)^{2}+L_{r}M_{\mathcal{X}}]\|x\|
\leq \rho_{\delta}^{\mathrm{eff}}\|x\|$ (the inequality
$6C_{2}(M_{\mathcal{X}}+\epsStar)^{2}\leq\rho_{\delta}^{\mathrm{tight}}=12C_{2}(M_{\mathcal{X}}+\epsStar)(3M_{\mathcal{X}}+\epsStar)$
holds since $M_{\mathcal{X}}+\epsStar\leq 2(3M_{\mathcal{X}}+\epsStar)$, i.e.\
$\epsStar\leq 5M_{\mathcal{X}}+\epsStar$, trivially).

The LMI hypothesis $\mu_{\mathrm{LMI}}
< \rho_{\delta}^{\mathrm{eff}}$ corresponds to $\mu \in (0, \rho)$ of
Theorem~\ref{thm:beta_kyb}. The LMI~\eqref{eq:lmi_augmented} coincides with
LMI~\eqref{eq:lmi_appendix} via $\mu_{*} := \lambda_{\max}(P^{*}) = \mu_{\mathrm{LMI}}$.
Conclusion~\eqref{eq:beta_exp_app} then yields~\eqref{eq:beta_stab} with
$\Gamma(\mu_{\mathrm{LMI}}) = C_{\infty}(\mu_{\mathrm{LMI}})$.
\end{proof}

\begin{remark}[Sub-optimality of the Young parameter choice]\label{rem:Cinfty_Gamma}
The bound $C_{\infty}(\mu)$ depends on the Young parameter $\mu \in (0, \rho_{\delta}^{\mathrm{eff}})$,
which can be chosen independently of the LMI scaling $\mu_{*} = \lambda_{\max}(P^{*})$.
The analytically optimal value of $\mu$ minimizing $C_{\infty}(\mu)$ is obtained
by setting $\partial C_{\infty}/\partial\mu = 0$. From
$C_{\infty}(\mu) = \mu_{*}^{2}w_{\max}^{2}/(\rho-\mu) + \mu_{*}\ell_{g}^{2}/\mu$
(with $w_{\max} = c_{C}c_{B}\cBplus R_{\max}$, $\ell_{g} = \ell_{\delta}^{\mathrm{tight}}$,
$\rho = \rho_{\delta}^{\mathrm{eff}}$):
\[
\frac{\partial C_{\infty}}{\partial\mu}
= \frac{\mu_{*}^{2}w_{\max}^{2}}{(\rho - \mu)^{2}} - \frac{\mu_{*}\ell_{g}^{2}}{\mu^{2}} = 0
\;\Longleftrightarrow\;
\mu_{*}w_{\max}\,\mu = \sqrt{\mu_{*}}\,\ell_{g}\,(\rho - \mu),
\]
yielding
\begin{equation}\label{eq:mu_opt_explicit}
\mu_{\mathrm{opt}}
= \frac{\rho\,\sqrt{\mu_{*}}\,\ell_{g}}
{\mu_{*}\,w_{\max} + \sqrt{\mu_{*}}\,\ell_{g}}
= \frac{\rho_{\delta}^{\mathrm{eff}}\,(\mu_{*})^{1/2}\,\ell_{\delta}^{\mathrm{tight}}}
{\mu_{*}\,c_{C}\,c_{B}\,\cBplus\,R_{\max} + (\mu_{*})^{1/2}\,\ell_{\delta}^{\mathrm{tight}}}.
\end{equation}
The exponents are: $\mu_{*}^{1/2}$ in the numerator and in the second term of
the denominator; $\mu_{*}^{1}$ in the first term of the denominator.
For the test system ($\mu_{*} = 1$, so $(\mu_{*})^{1/2} = \mu_{*} = 1$):
$\mu_{\mathrm{opt}} = \rho_{\delta}^{\mathrm{eff}}\cdot\ell_{\delta}^{\mathrm{tight}}
/(c_{C}c_{B}\cBplus R_{\max} + \ell_{\delta}^{\mathrm{tight}})$,
giving numerically
$\mu_{\mathrm{opt}} \approx 0.0202 \cdot 2.72\times 10^{-3}
/(4.93\times 10^{-3} + 2.72\times 10^{-3}) \approx 0.0072$, slightly below
$\rho_{\delta}^{\mathrm{eff}}/2 = 0.0101$ used in the test-system computations.
Both choices yield $\kappa_{\infty} = O(10^{-1})$, well below $\beta^{*}$.
\end{remark}

\begin{remark}[LMI Feasibility at $P=I$]\label{rem:lmi_feasibility_corrected}
LMI~\eqref{eq:lmi_augmented} is feasible at $P=I$ whenever
$\beta+\rho_{\delta}^{\mathrm{eff}}\mu_{k}<2\alpha_{s}$ (Assumption~\ref{ass:hurwitz}).
This is the condition checked in Step~3 of Algorithm~\ref{alg:init}, which
ensures LMI feasibility is established \emph{before} Algorithm~\ref{alg:lmi}
is called.
\end{remark}

\section{Analytical Illustrations}\label{sec:sim}

To check that the theoretical bounds are not vacuous, we apply them to
a small test system. The computations below are entirely analytical;
they exhibit concrete values for the constants $A, C_{1}, C_{2}, \Cnc$,
etc., and confirm that the assumptions of Theorem~\ref{thm:control} and
Proposition~\ref{prop:beta_stability} are satisfied. A full closed-loop
simulation belongs to the companion paper.

\subsection{Test System: A Quasi-Lie Bracket on $\HH$}\label{sec:test_bracket}

Consider $n = 1$ (the algebra $\HH$ itself) with the bilinear bracket
\begin{equation}\label{eq:test_bracket}
\LL(x, y) := \varepsilon_{b}\bigl(\bar{x}y - \bar{y}x\bigr),
\qquad \varepsilon_{b} = 0.1.
\end{equation}

\begin{remark}[Properties of the test bracket]\label{rem:test_bracket_props}
The bracket~\eqref{eq:test_bracket} is:
\begin{itemize}
\item \textbf{$\RR$-bilinear} (the conjugation $x \mapsto \bar{x}$ is
$\RR$-linear on $\HH$); not $\HH$-bilinear in the right-module sense.
\item \textbf{Skew-symmetric}: $\LL(y,x) = -\LL(x,y)$.
\item \textbf{Genuinely quasi-Lie}: a direct symbolic computation
(Appendix~\ref{app:bracket_check}) shows that the Jacobiator
\[
\mathcal{J}(x,y,z) := \LL(x,\LL(y,z))
+ \LL(y,\LL(z,x))
+ \LL(z,\LL(x,y))
\]
is non-zero (e.g., $\mathcal{J}(1+\mathbf{i}, \mathbf{j}, \mathbf{k}) = 0.04\,\mathbf{i} \neq 0$
for $\varepsilon_b = 0.1$, by direct computation in
Appendix~\ref{app:bracket_check}; note that $\mathcal{J}$ vanishes when
all three arguments are purely imaginary, since the bracket reduces to
$-\varepsilon_b\,[\cdot,\cdot]_{\HH}$ on imaginary quaternions, isomorphic to
the Lie bracket of $\mathfrak{so}(3)$, which satisfies Jacobi exactly).
This distinguishes~\eqref{eq:test_bracket} from the standard quaternionic
commutator $[x,y]_{\HH} := xy - yx$, which is $\HH$-linear and satisfies
the Jacobi identity exactly.
\end{itemize}
The Carath\'eodory regularity (Assumption~\ref{ass:continuity}) is satisfied
because $\LL$ is a polynomial map.
\end{remark}

\begin{remark}[Numerical computation of constants]\label{rem:test_constants}
For~\eqref{eq:test_bracket} with $\varepsilon_{b}=0.1$, the constants
of~\cite[Def.~2.4]{Athmouni2026} are:
\begin{align*}
A &= \sup_{x,y\neq 0}\frac{\|\LL(x,y)\|}{\|x\|\,\|y\|}
= 2\varepsilon_{b} = 0.2,\\
C_{1} &= 0 \quad (\text{$\LL$ exactly antisymmetric: $\LL(y,x)=-\LL(x,y)$}),\\
6\,C_{2} &= \sup_{x,y,z\neq 0}\frac{\|\mathcal{J}(x,y,z)\|}{\|x\|\,\|y\|\,\|z\|}
\approx 0.0399,\quad\text{hence}\quad C_{2} \approx 0.00665.
\end{align*}
The bound $A=2\varepsilon_{b}$ holds because $\|\bar{x}y\|=\|x\|\|y\|$
($\HH$ is a normed division algebra), and is attained for orthogonal
pure-imaginary $x,y$ (e.g.\ $x=\mathbf{i}$, $y=\mathbf{j}$ gives
$\LL(\mathbf{i},\mathbf{j})=-2\varepsilon_{b}\,\mathbf{k}$; note
$\LL(x,x)=0$ by skew-symmetry).
The constant $C_{2}$ is verified by Monte Carlo maximization over
$5\cdot 10^{4}$ random samples; an analytic upper bound
$6C_{2}\leq 3A^{2}=12\varepsilon_{b}^{2}=0.12$ follows from the triangle
inequality on the three terms of the Jacobiator (each bounded by
$A^{2}\|x\|\|y\|\|z\|=4\varepsilon_{b}^{2}\|x\|\|y\|\|z\|$). A sharper
bound $6C_{2}\leq 8\varepsilon_{b}^{2}=0.08$ follows from cancellations
specific to $\bar{x}y-\bar{y}x$.

\textbf{Note.} The quantity
$\sup_{x,y,z\neq 0}\|\LL(x,\LL(y,z))\|/(\|x\|\|y\|\|z\|)=4\varepsilon_{b}^{2}=0.04$
(reported as ``$C_{1}$'' in earlier drafts) is the bracket-squared constant,
not $C_{1}$ in the sense of~\cite[Def.~2.4]{Athmouni2026}. It plays no role
in the admissible-radius formulas of this paper since the test bracket is
exactly antisymmetric.
\end{remark}

\subsection{Effective domain $\epsStar$}

Since the test bracket is exactly antisymmetric ($C_{1}=0$), we use the
\emph{antisymmetric admissible radius} of~\cite[Appendix~A.2]{Athmouni2026}
with $\varepsilon_{0}=0.5$:
\begin{equation}\label{eq:test_eps_star}
\epsStar = \min\left\{\frac{1}{8A}, \frac{1}{4C_{2}}, \varepsilon_{0}\right\}
= \min\{0.625, 37.6, 0.5\} = 0.5.
\end{equation}
For comparison with the generic (non-antisymmetric) formula,
$\min\{1/(16A),1/(4C_{2}),\varepsilon_{0}\} = \min\{0.3125,37.6,0.5\}=0.3125$,
which is the more conservative value used in earlier drafts before the
antisymmetric refinement was identified.

\smallskip
\noindent\textbf{Choice for numerical illustrations.} To remain conservative
(and to facilitate comparison with non-antisymmetric brackets which would
fall under the generic formula), the worked example below
uses $\epsStar=0.3125$. All conclusions remain valid \emph{a fortiori} with
the tighter $\epsStar=0.5$.

\subsection{Cohomological matching condition (CMC)}

For $n = 1$, $\HH^{n} = \HH$ is one-dimensional as an $\HH$-module. We choose
the actuator matrix
\begin{equation}\label{eq:test_B}
B = 1 \in \HH^{1\times 1}, \qquad
\mathrm{Im}(B) = \HH = \HH^{n}.
\end{equation}
The CMC is then trivially satisfied:
\begin{itemize}
\item \textbf{(CMC-1)} $\delta \in \mathrm{im}(d) \subseteq \HH$
(homogeneous case, Assumption~\ref{ass:cohom}).
\item \textbf{(CMC-2)} $\mathrm{im}(d) \subseteq \HH = \mathrm{Im}(B)$ trivially.
\end{itemize}
With $B = 1$, $B_{\RR} = I_{4}$, $\cBplus = \|B_{\RR}^{+}\|_{\mathrm{op}} = 1$,
and the projection $\Pi_{\mathrm{Im}(B)}$ is the identity on $\HH$.

\begin{lemma}[Verification of CMC-2 for the test bracket]\label{lem:omega0_image}
For the bracket~\eqref{eq:test_bracket} with $B = 1$, the bilinear
correction $\Omega$ produced by Theorem~\ref{thm:rigidity}
(\cite[Theorem~4.12]{Athmouni2026}) satisfies
$\Omega(x,y)\in\mathrm{Im}(B)=\HH$ for all $x,y\in\HH$, trivially.
Likewise $d\omega_{0}(x,y,z)\in\HH=\mathrm{Im}(B)$, trivially.
The localized norm bound
$\|\Omega\|_{\varepsilon}\leq 4C_{2}\varepsilon$ is given
by~\cite[Prop.~4.3]{Athmouni2026}, transferred to the left module by
Corollary~\ref{cor:homotopy_transfer}.
\end{lemma}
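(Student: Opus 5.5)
The claims are essentially tautological once the setting is pinned down, so the plan is mainly bookkeeping. First I would identify $\mathrm{Im}(B)$. With $n=m=1$ and $B=1$, every $q\in\HH$ satisfies $q=B q$, so $\mathrm{Im}(B)=\HH$. Equivalently, in the real representation $B_{\RR}=\Phi(1)=I_{4}$ has full column rank and $\mathrm{Im}(B_{\RR})=\RR^{4}$. By Lemma~\ref{lem:BtB_invertible}, $\Pi_{\mathrm{Im}(B)}$ is then the identity on $\HH$. This settles the target space of both membership statements.

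Second, I would check the codomains. By Theorem~\ref{thm:rigidity} (hypotheses (H1)--(H3) hold for the test bracket: it is $\RR$-bilinear and exactly antisymmetric by Remark~\ref{rem:test_bracket_props}, and has finite constants $A=0.2$, $C_{1}=0$, $C_{2}\approx 0.00665$ by Remark~\ref{rem:test_constants}), the correction $\Omega$ lies in $C^{2}_{\varepsilon}$. That space consists of maps with values in $\HH^{n}=\HH$ (Definition~\ref{def:cochain}), so $\Omega(x,y)\in\HH=\mathrm{Im}(B)$. The same reasoning applies to $\omega_{0}\in C^{2}_{\varepsilon}$. Since $d$ maps $C^{2}_{\varepsilon}$ into $C^{3}_{\varepsilon}$ through the coboundary formula~\eqref{eq:coboundary_alt}, and every term of that formula is $\LL$ or $\omega_{0}$ evaluated in $\HH$, we get $d\omega_{0}(x,y,z)\in\HH$. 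In particular this also gives (CMC-2): $\mathrm{im}_{\mathrm{eval}}(d)\subseteq\HH=\mathrm{Im}(B)$.

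One point needs attention. The statement says ``for all $x,y\in\HH$,'' but $\Omega$ is only defined on $B(0,\varepsilon)^{2}$. I would extend $\Omega$ by bilinearity, scaling $x,y$ into the ball and using homogeneity. The extension still takes values in $\HH$, so membership in $\mathrm{Im}(B)$ is unaffected.

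Third, for the norm bound, I would quote \cite[Prop.~4.3]{Athmouni2026}, which gives $\|\Omega\|_{\varepsilon}\leq 4C_{2}\varepsilon$ in the right-module convention. Corollary~\ref{cor:homotopy_transfer} then transfers it with equality of norms to $\Omega^{L}$, because $\Phiconj$ is an isometric involution (Lemma~\ref{lem:phi_conj_props}). The test bracket is only $\RR$-bilinear, not right-$\HH$-linear, so I would note that the existence and norm bound in Theorem~\ref{thm:rigidity} do not require right-linearity (the remark in (H1)). The main obstacle, mild as it is, is exactly this bookkeeping: making sure the cited rigidity theorem applies to a merely $\RR$-bilinear bracket with $\varepsilon<\epsStar=0.3125$. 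Everything else follows directly from $\mathrm{Im}(B)$ being all of $\HH$.
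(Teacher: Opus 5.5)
Your proposal is correct and matches the paper's proof, which is just the observation that $\HH^{1}=\HH=\mathrm{Im}(B)$ (so any $\HH$-valued cochain, in particular $\Omega$ and $d\omega_{0}$, lands in $\mathrm{Im}(B)$), together with a citation of \cite[Prop.~4.3]{Athmouni2026} for the norm bound. Your extra bookkeeping is harmless: the bilinear extension off the ball, the transfer via Corollary~\ref{cor:homotopy_transfer}, and the remark that right-$\HH$-linearity is not needed. One small point: you assert (H2) without checking it, but this is unnecessary, since the lemma takes $\Omega$ as given.
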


\begin{proof}
Trivial: $\HH^{1}=\HH$, so any element of $\HH^{1}$ automatically lies
in $\mathrm{Im}(B)=\HH$. The norm bound is~\cite[Prop.~4.3]{Athmouni2026}.
\end{proof}

\subsection{Quantitative constants for $M_{0} = 0.1$}

We choose a design domain radius $M_{0} = 0.1 < \epsStar/\sqrt{2} \approx 0.221$,
which lies within the admissible domain identified by Algorithm~\ref{alg:init}.
For the test bracket we use the problem-dependent bound
$\|\omega_{0}\|_{\varepsilon}\leq 8C_{2}\varepsilon$
(obtained from $\Omega=T(\mathrm{Id}+K)^{-1}\psi+\omega_{0}$,
$\|T(\mathrm{Id}+K)^{-1}\psi\|_{\varepsilon}\leq 2\|T\|\|\psi\|_{\varepsilon}\leq 4C_{2}\varepsilon$,
and $\|\Omega\|_{\varepsilon}\leq 4C_{2}\varepsilon$ from
Theorem~\ref{thm:rigidity}), evaluated at $\varepsilon=\epsStar=0.3125$:
$\omega_{\mathrm{op}}\leq 8\cdot 0.00665\cdot 0.3125\approx 0.0166$.
Substituting:
\begin{align*}
\Cnc &\leq \min\{12C_{2},\;3\omega_{\mathrm{op}} + 6C_{2}\}\\
     &= \min\{12\cdot 0.00665,\;3\cdot 0.0166 + 6\cdot 0.00665\}\\
     &\approx \min\{0.0798,\;0.0898\} = 0.0798
\quad(\text{structural form is tighter here}),\\
R_{\max}(M_{0}) &= \Cnc M_{0}(M_{0}+\epsStar)^{2}
\leq 0.0798 \cdot 0.1 \cdot 0.4125^{2} \approx 1.36\times 10^{-3},\\
\deltabar_{\max}(M_{0}) &= 6C_{2} M_{0}(M_{0}+\epsStar)^{2}
\approx 0.0399 \cdot 0.1 \cdot 0.4125^{2} \approx 6.79\times 10^{-4},\\
\rho_{\delta}(M_{0}) &= 6C_{2}(M_{0}+\epsStar)(3M_{0}+\epsStar)
\approx 0.0399 \cdot 0.4125 \cdot 0.6125 \approx 0.0101.
\end{align*}
For backwards compatibility with the bookkeeping form, the looser bound
$\Cnc\leq 0.0898$ may also be used; all subsequent estimates are
monotone in $\Cnc$ so any valid upper bound yields valid conclusions.
The tight GOSL constants are:
\[
\rho_{\delta}^{\mathrm{tight}} = 2\rho_{\delta} \approx 0.0202,\quad
\ell_{\delta}^{\mathrm{tight}} = 4\deltabar_{\max} \approx 2.72\times 10^{-3}.
\]
Note that all of $R_{\max}$, $\deltabar_{\max}$, $\rho_{\delta}$ are linear in
$M_{0}$ at fixed $\epsStar$, in agreement with Lemma~\ref{lem:Cinfty_scaling}.

\subsection{Nominal sliding-mode design and LMI feasibility}

For illustration, take $A_{s} = -\alpha_{s} I$ with $\alpha_{s} = 1$,
$D = 0$ (no exogenous disturbance), $C = 1$, $K = 0.1$,
$\eta_{0} = 0.01$, and $\rho_{\delta}^{\mathrm{eff}} = \rho_{\delta}^{\mathrm{tight}}
\approx 0.0202$ (since $L_{r} = 0$ for the bilinear test bracket).

\begin{itemize}
\item \textbf{Step 1 (LMI feasibility at $P = I$):}
$\mu_{0} = 1 + \alpha_{s} = 2$, $\mu_{k} = 1$ (after one iteration), and the
Young parameter $\mu \in (0, \rho_{\delta}^{\mathrm{eff}})$ is chosen as
$\mu_{\mathrm{opt}} \approx \rho_{\delta}^{\mathrm{eff}}/2 = 0.0101$. The
coefficient in~\eqref{eq:lmi_augmented} is
$\mu_{k}(2\rho_{\delta}^{\mathrm{eff}}+\mu)+(\rho_{\delta}^{\mathrm{eff}}-\mu) \approx 0.0606$.
Setting $\beta_{\mathrm{init}} = 0.5$:
$\beta_{\mathrm{init}} + 0.0606 \approx 0.561 < 2\alpha_{s} = 2$, so the
rigorous LMI~\eqref{eq:lmi_augmented} is strictly feasible at $P = I$.

\item \textbf{Step 2 (LMI solution):}
With $A_{s} = -I$ and $P = I$, the Lyapunov inequality reduces to
$-2I + 0.0606\,I \prec -\beta I$, giving the maximum $\beta^{*} \approx 1.94$.
Algorithm~\ref{alg:lmi} converges in one iteration with $\mu_{\mathrm{LMI}} = 1$.

\item \textbf{Step 3 (Forward invariance threshold):}
By Lemma~\ref{lem:Cinfty_scaling}, the threshold reads
$\beta^{*}\lambda_{\min}(P^{*}) > \kappa_{\infty}$. The constant $\kappa_{\infty}$
is computed from~\eqref{eq:kappa_infty_def} with the rigorous $\Gamma(\mu)$
of~\eqref{eq:Gamma_def}, i.e.,
$\kappa_{\infty} = \mu_{*}^{2}(c_C c_B \cBplus \Cnc (\epsStar)^{2})^{2}/(\rho_{\delta}^{\mathrm{eff}}-\mu)
+ \mu_{*}(\ell_{\delta}^{\mathrm{tight}}/M_{0})^{2}/\mu$ (using
$\bar\delta_{\max}/M_{0} \to 6C_{2}(\epsStar)^{2}$ as $M_{0} \to 0$, and
$\ell_{\delta}^{\mathrm{tight}} = 4\bar\delta_{\max}$):
\[
\kappa_{\infty} \approx
\frac{(0.0798\cdot 0.0977)^{2}}{0.0101}
+ \frac{(4\cdot 0.04 \cdot 0.0977)^{2}}{0.0101}
\approx 6.0\times 10^{-3} + 2.4\times 10^{-2}
\approx 3.0\times 10^{-2},
\]
modulo the Assumption~\ref{ass:lmi_feasibility}-dependent fine-tuning of $P^{*}$.
This is comfortably below $\beta^{*} \approx 1.94$ (Step~2 above), so the
invariance threshold $\beta^{*}\lambda_{\min}(P^{*}) > \kappa_{\infty}$ holds.
(With the bookkeeping bound $\Cnc \leq 0.0898$, $\kappa_{\infty} \approx 3.2\times 10^{-2}$;
both stay well below $\beta^{*}$.)
\end{itemize}

\subsection{Summary of numerical evidence}

\begin{table}[H]
\centering
\caption{Summary of numerical constants for the test system
($\varepsilon_b = 0.1$, $M_{0} = 0.1$, $\epsStar = 0.3125$).}
\label{tab:test_summary}
\begin{tabular}{@{}lll@{}}
\toprule
\textbf{Constant} & \textbf{Value} & \textbf{Source} \\
\midrule
$A$ (Lipschitz)                     & $0.2$              & Remark~\ref{rem:test_constants} \\
$C_2$ (Jacobiator)                  & $\approx 6.65\times 10^{-3}$ & Remark~\ref{rem:test_constants} \\
$\omega_{\mathrm{op}}$ (bound)      & $\leq 8C_{2}\epsStar \approx 0.0166$ & Remark~\ref{rem:omega_vs_C2} \\
$\Cnc$                              & $\leq 0.0798$ (structural; $\leq 0.0898$ bookkeeping) & eq.~\eqref{eq:Cnc_def} \\
$\epsStar$                          & $0.3125$           & eq.~\eqref{eq:test_eps_star} \\
$M_{0}^{\max} = \epsStar/\sqrt{2}$    & $\approx 0.221$    & Alg.~\ref{alg:init} Step~2 \\
$R_{\max}(M_{0})$                     & $\leq 1.36\times 10^{-3}$ & Prop.~\ref{prop:residual_bound} \\
$\deltabar_{\max}(M_{0})$             & $\approx 6.79\times 10^{-4}$ & eq.~\eqref{eq:deltabar_max} \\
$\rho_{\delta}^{\mathrm{tight}}$    & $\approx 0.0202$   & Lem.~\ref{lem:osl_projection} \\
$\ell_{\delta}^{\mathrm{tight}}$    & $\approx 2.72\times 10^{-3}$ & Lem.~\ref{lem:osl_projection} \\
$\cBplus$ (test system, $B=1$)      & $1$                & eq.~\eqref{eq:cBplus_def} \\
$\beta^{*}$ (LMI optimum at $P = I$)  & $\approx 1.94$     & Alg.~\ref{alg:lmi} \\
$\kappa_{\infty}$                   & $\approx 3.0\times 10^{-2}$ (structural; $3.2\times 10^{-2}$ bookkeeping) & Lem.~\ref{lem:Cinfty_scaling} \\
\bottomrule
\end{tabular}
\end{table}

\begin{remark}[Limitations of the illustration]\label{rem:test_limitations}
The test system is one-dimensional ($n = 1$) and uses the trivial actuator
$B = 1$, so the CMC is satisfied trivially. The interesting cohomological
content of the framework---namely, the strict inclusion
$\mathrm{im}(d) \subsetneq \HH^{n}$ when $n \geq 2$ and $B$ has rank $m < n$
---is not exercised here. A more substantive numerical study, including
non-trivial CMC verification in higher dimensions and full closed-loop
simulations, is left for a companion paper.
\end{remark}

\section{Conclusion}\label{sec:conclusion}

We close with a brief assessment of what was achieved, what was not, and
where the natural follow-up problems lie.

On the operator-theoretic side, Theorem~\ref{thm:norm_transfer} gives an
exact identity rather than the bounded ratio one would expect from
elementary considerations: $\|T^{L}\|_{\mathrm{op}} = \|T\|_{\mathrm{op}}$,
with no constant factor. The proof is short, but its consequence is
substantial --- every quantitative bound of~\cite{Athmouni2026} now
applies verbatim to left-module formulations, which is the convention
preferred in attitude dynamics. We also restate the rigidity theorem
of~\cite[Theorem~4.12]{Athmouni2026} in a form (Theorem~\ref{thm:rigidity})
that tracks the constants needed in the control argument, and isolate
the auxiliary residual $R_{0} = \mathcal{J} - d\omega_{0}$ that appears
in the cohomological feedforward. The third operator-theoretic
ingredient, Lemma~\ref{lem:osl_projection}, is a GOSL bound for the
projected Jacobi defect with constants
$(\rho_{\delta}^{\mathrm{tight}}, \ell_{\delta}^{\mathrm{tight}})
 = (2\rho_{\delta}, 4\deltabar_{\max})$. The factors of $2$ and $4$
encode the suboptimality cost of using a state-dependent measurable
selection; they collapse to $1$ in the rare case where a
state-independent maximizer exists. The proof handles the state
dependence by freezing the selection and using $\RR$-trilinearity ---
not a difficult trick, but one that is easy to overlook.

These ingredients combine in Section~\ref{sec:design} into a sliding-mode
controller whose feedforward $\Delta_{\mathrm{app}}$ uses the explicit
$\omega_{0}$ rather than the unknown $\delta$. The closed-loop sliding
dynamics reduce to
$\dot s = -\eta\,\sgnH(s) - Ks + \tilde w$
with $\tilde w = G\,R_{\mathrm{nc}}^{B}$ and
$\|\tilde w\| \leq c_{C}\,c_{B}\,\cBplus\,R_{\max}$. Finite-time
reaching follows from Theorem~\ref{thm:control}, and
$\beta$-exponential stability of the reduced dynamics from
Proposition~\ref{prop:beta_stability}. The circular dependence between
gain and domain --- a well-known practical nuisance --- is resolved by
the explicit ordering and halving loop of Algorithm~\ref{alg:init}.
Termination is established conditionally, under the sufficient
conditions of Remark~\ref{rem:termination_status}; both conditions are
classically expected but not proved here.

What remains open is worth stating clearly. The forward-invariance
result (Proposition~\ref{prop:forward_inv}) gives a trajectory bound
$\|x_{r}(t)\| \leq R$ in the general case, but strict inclusion
$\overline B_{R} \subseteq \overline B_{M_{0}}$ requires $\kappa(P^{*})=1$.
For non-trivial Lyapunov shaping $P^{*} \neq I$ the strict inclusion
remains beyond reach without further hypotheses. Termination of the
halving loop is conditional on continuity of the LMI solution map and
strict feasibility at the limit; both are expected to hold under mild
hypotheses but a formal proof is not given. The strict feasibility of
the iterative LMI~\eqref{eq:lmi_augmented} is verified case by case in
the test bracket; a general structural characterization is open.
Sobolev-type generalizations of the rigidity theorem would broaden the
scope significantly but are conjectural. Multidimensional closed-loop
simulations are deferred to a companion paper, where the cohomological
feedforward will be tested on rigid-body attitude control ($n=4$, $m=3$)
and coupled oscillator networks.

\medskip
\noindent\textbf{Further directions.}
An adaptive gain $\eta$ using online estimates of $R_{\max}$ would
reduce conservatism in the switching gain; a quantitative bound on the
number of halvings in Algorithm~\ref{alg:init}, exploiting explicit
Lipschitz constants of the LMI parameter map, would give an
unconditional complexity estimate. On the algorithmic side,
verifying~(CMC-2) when $\mathrm{Im}(B) \subsetneq \HH^{n}$ requires
computing $\mathrm{im}(d) \cap \mathrm{Im}(B)$ explicitly; effective
methods for this in higher dimensions are an open question. The
interplay between the $S$-spectrum of $\Phi(A_{s})$ and the
cohomological structure of $\LL$ also deserves attention.

\appendix

\section{$\beta$-Exponential Stability under Dissipative Perturbations}\label{app:beta}

This appendix provides a self-contained proof of $\beta$-exponential stability
for systems with dissipative perturbations (motivated by, and applicable to,
the GOSL setting of Lemma~\ref{lem:osl_projection}), used in
Section~\ref{sec:stability}.

\begin{theorem}[$\beta$-Exponential Stability under Dissipative Perturbations]\label{thm:beta_kyb}
Consider the perturbed linear system
\begin{equation}\label{eq:perturbed_lin}
\dot{x} = A x + g(x,t) + w(t),
\end{equation}
where $A \in \RR^{N\times N}$ is Hurwitz, $g$ satisfies the GOSL bound
\begin{equation}\label{eq:gosl_g}
\langle g(x,t) - g(z,t), x - z\rangle \leq \rho \|x-z\|^2 + \ell_g \|x - z\|
\end{equation}
with constants $\rho \geq 0$, $\ell_g \geq 0$, $\|g(0,t)\| \leq \ell_g$
for all $t$, the dissipativity-type bound $\|g(x,t)-g(0,t)\| \leq \rho\|x\|$
holds (so $\|g(x,t)\| \leq \rho\|x\| + \ell_g$), and $\|w(t)\| \leq w_{\max}$.

\emph{The GOSL bound~\eqref{eq:gosl_g} is recorded here as the
characterization of the defect that motivates the present setting; the
proof below uses only the dissipativity-type bound
$\|g(x,t)\|\leq\rho\|x\|+\ell_g$, which is the consequence relevant for
the Lyapunov calculation.}

Fix $\mu \in (0,\rho)$ and suppose there exist $P = P^T \succ 0$ and $\beta > 0$
such that
\begin{equation}\label{eq:lmi_appendix}
A^{T}P + PA + \bigl[\mu_{*}(2\rho+\mu) + (\rho-\mu)\bigr] I \preceq -\beta P,
\qquad I \preceq P \preceq \mu_{*} I,
\end{equation}
where $\mu_{*} := \lambda_{\max}(P)$. Then for $V(x) := x^T P x$ and any
trajectory of~\eqref{eq:perturbed_lin}:
\begin{equation}\label{eq:beta_exp_app}
V(x(t)) \leq e^{-\beta t} V(x(0)) + \frac{\Gamma(\mu)}{\beta},
\end{equation}
where
\begin{equation}\label{eq:Gamma_def}
\Gamma(\mu) := \frac{\mu_{*}^{2}w_{\max}^2}{\rho - \mu} + \frac{\mu_{*}\ell_g^2}{\mu}, \qquad \mu \in (0,\rho).
\end{equation}
\end{theorem}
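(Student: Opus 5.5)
The plan is a direct quadratic-Lyapunov computation along trajectories of \eqref{eq:perturbed_lin}, followed by a comparison (Gronwall) argument. As the remark after the statement indicates, only the dissipativity-type bound $\|g(x,t)\|\leq\rho\|x\|+\ell_g$ will be used; the GOSL inequality \eqref{eq:gosl_g} plays no role. For $V(x)=x^{T}Px$, I would first compute, for almost every $t$ (trajectories being absolutely continuous),
\[
\dot V = x^{T}(A^{T}P+PA)x + 2x^{T}Pg(x,t) + 2x^{T}Pw(t).
\]
The two cross terms are then bounded using $\|Px\|\leq\lambda_{\max}(P)\|x\|=\mu_{*}\|x\|$ (Lemma~\ref{lem:eigenvalue_bound} for $P\succ 0$) together with Cauchy--Schwarz. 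This gives
\[
2x^{T}Pg\leq 2\mu_{*}\rho\|x\|^{2}+2\mu_{*}\ell_g\|x\|,
\qquad
2x^{T}Pw\leq 2\mu_{*}w_{\max}\|x\|.
\]

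The key step is to choose the Young weights so that the resulting $\|x\|^{2}$ coefficients reproduce exactly the bracket $\mu_{*}(2\rho+\mu)+(\rho-\mu)$ in \eqref{eq:lmi_appendix}, and the constant terms reproduce $\Gamma(\mu)$. I would use
\[
2\mu_{*}\ell_g\|x\|\leq \mu_{*}\mu\|x\|^{2}+\frac{\mu_{*}\ell_g^{2}}{\mu},
\qquad
2\mu_{*}w_{\max}\|x\|\leq(\rho-\mu)\|x\|^{2}+\frac{\mu_{*}^{2}w_{\max}^{2}}{\rho-\mu}.
\]
Both weights are positive precisely because $\mu\in(0,\rho)$. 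Summing all contributions gives
\[
\dot V\leq x^{T}\bigl(A^{T}P+PA+[\mu_{*}(2\rho+\mu)+(\rho-\mu)]I\bigr)x+\Gamma(\mu),
\]
and the LMI \eqref{eq:lmi_appendix} then yields $\dot V\leq-\beta V+\Gamma(\mu)$ almost everywhere.

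Finally, I would multiply by $e^{\beta t}$, observe that $\frac{d}{dt}\bigl(e^{\beta t}V\bigr)\leq\Gamma(\mu)e^{\beta t}$, and integrate on $[0,t]$. This gives $V(x(t))\leq e^{-\beta t}V(x(0))+\frac{\Gamma(\mu)}{\beta}(1-e^{-\beta t})$, which implies \eqref{eq:beta_exp_app}. Note that the constraint $I\preceq P$ is not actually needed for the estimate; it only fixes the normalization.

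The only delicate point I foresee is regularity. The hypotheses on $g$ are only Carath\'eodory-type, and in the application the dynamics arise from a Filippov sliding reduction. I would therefore state the derivative inequality for absolutely continuous solutions at almost every $t$, and use the integrated (comparison-lemma) form of Gronwall rather than pointwise differentiability. The remaining step is an algebraic match of coefficients, which the choice of Young weights above is designed to make exact.
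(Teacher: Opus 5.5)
Your proposal is correct and matches the paper's proof step for step. Both use the Cauchy--Schwarz bound through $\|Px\|\leq\mu_{*}\|x\|$, the same Young weights $\mu_{*}\mu$ and $\rho-\mu$ (which make the $\|x\|^{2}$ coefficients cancel exactly against the LMI bracket), and the same Gronwall step; your remarks on almost-everywhere differentiability and on the redundancy of $I\preceq P$ are accurate refinements the paper leaves implicit.
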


\begin{proof}
Let $V = x^T P x$. Then
\[
\dot{V} = 2x^T P A x + 2 x^T P g(x,t) + 2 x^T P w(t).
\]

\textbf{Step 1 (Bound on the GOSL term).} By Cauchy--Schwarz and
$\|Px\| \leq \mu_{*}\|x\|$:
\[
2 x^{T}P g(x,t) \leq 2\|Px\|\,\|g(x,t)\| \leq 2\mu_{*}\|x\|(\rho\|x\|+\ell_{g})
= 2\mu_{*}\rho\|x\|^{2} + 2\mu_{*}\ell_{g}\|x\|.
\]
Apply Young's inequality $2ab \leq a^{2}/\theta + \theta b^{2}$ with
$a = \mu_{*}\ell_{g}$, $b = \|x\|$, $\theta = \mu_{*}\mu$:
\[
2\mu_{*}\ell_{g}\|x\| \leq \frac{(\mu_{*}\ell_g)^{2}}{\mu_{*}\mu} + \mu_{*}\mu\|x\|^{2}
= \frac{\mu_{*}\ell_g^{2}}{\mu} + \mu_{*}\mu\|x\|^{2}.
\]
Therefore
\begin{equation}\label{eq:gosl_bound}
2 x^{T}P g(x,t) \leq \mu_{*}(2\rho+\mu)\|x\|^{2} + \frac{\mu_{*}\ell_g^{2}}{\mu}.
\end{equation}

\textbf{Step 2 (Bound on the disturbance term).} By Cauchy--Schwarz and Young
with $a = \mu_{*}w_{\max}$, $b = \|x\|$, $\theta = \rho - \mu > 0$:
\begin{equation}\label{eq:dist_bound}
2 x^{T}P w \leq 2\mu_{*}w_{\max}\|x\|
\leq \frac{\mu_{*}^{2}w_{\max}^{2}}{\rho - \mu} + (\rho - \mu)\|x\|^{2}.
\end{equation}

\textbf{Step 3 (Linear part).} The LMI~\eqref{eq:lmi_appendix} gives directly
\begin{equation}\label{eq:lin_bound}
2 x^{T}P A x = x^{T}(A^{T}P+PA)x
\leq -\bigl[\mu_{*}(2\rho+\mu) + (\rho - \mu)\bigr]\|x\|^{2} - \beta x^{T}Px.
\end{equation}

\textbf{Step 4 (Combination).} Adding \eqref{eq:gosl_bound}, \eqref{eq:dist_bound},
\eqref{eq:lin_bound}, the terms in $\|x\|^{2}$ cancel exactly:
\begin{align*}
\dot{V}
&\leq -\beta V \underbrace{- \bigl[\mu_{*}(2\rho+\mu) + (\rho - \mu)\bigr]\|x\|^{2}
+ \mu_{*}(2\rho+\mu)\|x\|^{2} + (\rho-\mu)\|x\|^{2}}_{=\,0}
+ \frac{\mu_{*}\ell_g^{2}}{\mu} + \frac{\mu_{*}^{2}w_{\max}^{2}}{\rho-\mu}\\
&= -\beta V + \Gamma(\mu),
\end{align*}
with $\Gamma(\mu)$ as in~\eqref{eq:Gamma_def}. Gronwall's inequality applied
to $\dot{V} \leq -\beta V + \Gamma(\mu)$ yields~\eqref{eq:beta_exp_app}.
\end{proof}

\begin{remark}[Application to the reduced dynamics]\label{rem:app_reduced}
Theorem~\ref{thm:beta_kyb} applies to the reduced
dynamics~\eqref{eq:reduced_dyn} via the embedding $\Phi$: take $A = \Phi(A_{s})$,
$g(x,t) = \Phi(\delta(\Phi^{-1}(x),t)) + \Phi(r(\Phi^{-1}(x)))$, $w(t) = \Phi(w_r(t))$,
$\rho = \rho_{\delta}^{\mathrm{eff}}$, $\ell_g = \ell_{\delta}^{\mathrm{tight}}$,
$w_{\max} = c_C\,c_B\,\cBplus R_{\max}$, $\mu = \mu_{\mathrm{LMI}}$. With
$\mu_{*} = \lambda_{\max}(P^{*}) = \mu_{\mathrm{LMI}}$, the
LMI~\eqref{eq:lmi_appendix} reads
\[
\Phi(A_{s})^{T}P + P\Phi(A_{s}) + \bigl[\mu_{\mathrm{LMI}}(2\rho_{\delta}^{\mathrm{eff}}+\mu_{\mathrm{LMI}}) + (\rho_{\delta}^{\mathrm{eff}}-\mu_{\mathrm{LMI}})\bigr] I \preceq -\beta^{*}P,
\]
which is the precise form of the LMI used in
Proposition~\ref{prop:beta_stability}. The simplified
LMI~\eqref{eq:lmi_augmented} of the body
($\Phi(A_{s})^{T}P + P\Phi(A_{s}) + \rho_{\delta}^{\mathrm{eff}}\mu_{\mathrm{LMI}}I \prec -\beta P$)
is a \emph{conservative approximation} of the rigorous
LMI~\eqref{eq:lmi_appendix}, valid only under the additional condition that
the residual cross-term $\mu_{*}\rho\|x\|^{2}$ is dominated by the strict
inequality margin. \textbf{For mathematical rigor, the strengthened form
\eqref{eq:lmi_appendix} should be used in Algorithm~\ref{alg:lmi}}; the
simplified form is retained in the body for notational simplicity and
because, for the test system (Section~\ref{sec:sim}), it coincides with the
strengthened form up to $O(\rho_{\delta}^{\mathrm{eff}})$-small corrections
that do not affect the qualitative conclusion. With this identification,
$\Gamma(\mu_{\mathrm{LMI}}) = C_{\infty}(\mu_{\mathrm{LMI}})$
of~\eqref{eq:Cinfty_def}, recovering the body formula exactly when
$\mu_{*} = \lambda_{\max}(P^{*})$.
\end{remark}

\section{Verification of the Test Bracket}\label{app:bracket_check}

We verify that the test bracket~\eqref{eq:test_bracket}
$\LL(x,y) = \varepsilon_b(\bar{x}y - \bar{y}x)$ has a non-zero
Jacobiator. Direct computation with $x = \mathbf{i}$, $y = \mathbf{j}$,
$z = \mathbf{k}$ yields:
\begin{align*}
\LL(\mathbf{i},\mathbf{j}) &= \varepsilon_b(\overline{\mathbf{i}}\mathbf{j} - \overline{\mathbf{j}}\mathbf{i})
= \varepsilon_b(-\mathbf{i}\mathbf{j} - (-\mathbf{j})\mathbf{i})
= \varepsilon_b(-\mathbf{k} + \mathbf{j}\mathbf{i})
= \varepsilon_b(-\mathbf{k} - \mathbf{k}) = -2\varepsilon_b\,\mathbf{k},\\
\LL(\mathbf{j},\mathbf{k}) &= -2\varepsilon_b\,\mathbf{i},\quad
\LL(\mathbf{k},\mathbf{i}) = -2\varepsilon_b\,\mathbf{j}.
\end{align*}
Then, for $x,y,z$ all purely imaginary, the bracket reduces to
$\LL(p,q) = \varepsilon_b(-pq + qp) = -\varepsilon_b\,[p,q]_{\HH}$
(using $\bar{p} = -p$ for $p$ purely imaginary). Hence on the imaginary
sub-algebra (isomorphic to $\mathfrak{so}(3)$), $\LL$ is proportional
to the Lie bracket of $\mathfrak{so}(3)$, which \emph{satisfies the Jacobi
identity exactly}. In particular:
\begin{align*}
\LL(\mathbf{i}, \LL(\mathbf{j},\mathbf{k}))
&= \LL(\mathbf{i}, -2\varepsilon_b \mathbf{i})
= -2\varepsilon_b^2(\overline{\mathbf{i}}\mathbf{i} - \overline{\mathbf{i}}\mathbf{i}) = 0,
\end{align*}
and similarly the other two terms vanish, so $\mathcal{J}(\mathbf{i},\mathbf{j},\mathbf{k}) = 0$.
To exhibit a non-zero Jacobiator, we must choose at least one argument with
non-zero real part. Taking $x = 1 + \mathbf{i}$, $y = \mathbf{j}$,
$z = \mathbf{k}$ and $\varepsilon_b = 0.1$, direct computation yields
$\LL(\mathbf{j},\mathbf{k}) = -2\varepsilon_b\,\mathbf{i}$,
$\LL(\mathbf{k}, 1+\mathbf{i}) = -2\varepsilon_b(\mathbf{j}+\mathbf{k})$,
$\LL(1+\mathbf{i}, \mathbf{j}) = 2\varepsilon_b(\mathbf{j}-\mathbf{k})$,
and
\[
\mathcal{J}(1+\mathbf{i}, \mathbf{j}, \mathbf{k})
= 0.04\,\mathbf{i} \neq 0,
\]
so $\|\mathcal{J}\| = 0.04$ for these inputs. The general supremum bound
$\sup_{x,y,z}\|\mathcal{J}(x,y,z)\|/(\|x\|\|y\|\|z\|) \approx 0.0399$
reported in Remark~\ref{rem:test_constants} establishes that the bracket is
genuinely quasi-Lie.

\end{document}